\colorlet{MyBlue}{DodgerBlue!75!Black}
\colorlet{MyGreen}{DarkGreen!85!Black}
\newcommand{\Paragraph}[1]{\paragraph{#1}}
\newcommand{\afterhead}{.}		
\newcolumntype{C}[1]{>{\centering\let\newline\\\arraybackslash\hspace{0pt}}m{#1}}
\crefname{assumption}{Assumption}{Assumptions}
\newcommand{\debug}[1]{#1}		
\theoremstyle{plain}
\newtheorem{theorem}{Theorem}		
\newtheorem{lemma}{Lemma}		
\newtheorem{proposition}{Proposition}		
\newtheorem*{corollary*}{Corollary}		
\newtheorem*{lemma*}{Lemma}
\theoremstyle{definition}
\newtheorem{definition}[theorem]{Definition}		
\newtheorem{assumption}{Assumption}		
\newtheorem{example}{Example}		
\newtheorem*{definition*}{Definition}		
\newtheorem*{assumption*}{Assumptions}		
\newcommand{\asmtag}[1]{
  \let\oldtheassumption\theassumption
  \renewcommand{\theassumption}{#1}
  \g@addto@macro\endassumption{
    \addtocounter{assumption}{-1}
    \global\let\theassumption\oldtheassumption}
  }
\theoremstyle{remark}
\newtheorem{remark}{Remark}		
\newtheorem*{remark*}{Remark}		
\newtheorem*{example*}{Example}		
\newcounter{proofpart}
\DeclarePairedDelimiter{\abs}{\lvert}{\rvert}		
\DeclarePairedDelimiter{\floor}{\lfloor}{\rfloor}		
\DeclarePairedDelimiterX{\setdef}[2]{\{}{\}}{#1:#2}		
\DeclarePairedDelimiterXPP{\exclude}[1]{\mathopen{}\setminus}{\{}{\}}{}{#1}
\newcommand{\eg}{e.g.,\xspace}		
\newcommand{\ie}{i.e.,\xspace}		
\newcommand{\textpar}[1]{\textup(#1\textup)}		
\newcommand{\alt}[1]{#1'}		
\newcommand{\N}{\mathbb{N}}		
\newcommand{\R}{\mathbb{R}}		
\DeclareMathOperator{\bigoh}{\mathcal O}		
\newcommand{\dd}{\:d}		
\newcommand{\eps}{\varepsilon}		
\newcommand{\pd}{\partial}		
\newcommand{\vecspace}{\R^{\vdim}}		
\newcommand{\vdim}{\debug d}		
\DeclarePairedDelimiterX{\braket}[2]{\langle}{\rangle}{#1,#2}		
\newcommand{\dspace}{\vecspace}		
\newcommand{\dvec}{\debug y}		
\newcommand{\mat}{G}		
\DeclareMathOperator{\tcone}{TC}		
\newcommand{\cvx}{\mathcal{C}}		
\newcommand{\tvec}{\debug z}		
\DeclareMathOperator*{\argmin}{arg\,min}		
\newcommand{\points}{\mathcal{\debug X}}		
\newcommand{\point}{\debug x}		
\newcommand{\pointalt}{\alt\point}		
\newcommand{\dpoint}{\debug y}		
\newcommand{\obj}{\debug f}		
\newcommand{\vecfield}{\debug V}		
\newcommand{\vbound}{\debug M}		
\newcommand{\sol}[1][\point]{#1^{\star}}		
\newcommand{\sols}{\sol[\points]}		
\DeclareMathOperator{\Eucl}{\debug\Pi}		
\newcommand{\radius}{\debug R}		
\DeclareMathOperator{\ex}{\mathbb{E}}		
\DeclareMathOperator{\prob}{\mathbb{P}}		
\newcommand{\samples}{\debug \Omega}		
\newcommand{\filter}{\mathcal{\debug F}}		
\providecommand\given{}		
\DeclarePairedDelimiterXPP{\exof}[1]{\ex}{[}{]}{}{
\renewcommand\given{\nonscript\:\delimsize\vert\nonscript\:\mathopen{}} #1}
\DeclarePairedDelimiterXPP{\probof}[1]{\prob}{(}{)}{}{
\renewcommand\given{\nonscript\:\delimsize\vert\nonscript\:\mathopen{}} #1}
\newcommand{\noise}{\debug Z}		
\newcommand{\snoise}{\debug \xi}		
\newcommand{\noisedev}{\debug \sigma}		
\newcommand{\noisevar}{\debug \noisedev^{2}}		
\newcommand{\start}{\debug {1}}		
\newcommand{\running}{\debug 1,2,\dotsc}		
\newcommand{\run}{\debug t}		
\newcommand{\runalt}{\debug s}		
\newcommand{\nRuns}{\debug T}		
\newcommand{\new}[1]{#1^{+}}		
\newcommand{\state}{\debug X}		
\newcommand{\step}{\debug \gamma}		
\DeclareMathOperator*{\intersect}{\cap}		
\DeclareMathOperator*{\union}{\cup}		
\DeclareMathOperator{\one}{\mathds{1}}		
\newcommand{\from}{\colon}		
\DeclareMathOperator{\dist}{dist}		
\newcommand{\cpt}{\mathcal{\debug K}}						
\newcommand{\nhd}{\debug U}		
\DeclarePairedDelimiterX{\product}[2]{\langle}{\rangle}{#1,#2}		
\DeclarePairedDelimiter{\norm}{\lVert}{\rVert}		
\newcommand{\dnorm}[1]{\norm{#1}}		
\DeclareMathOperator{\proj}{\Eucl}		
\newcommand{\ball}{\mathbb{\debug B}}		
\newcommand{\test}[1][\point]{\hat#1}		
\newcommand{\arpoint}{\debug p}		
\newcommand{\smalloh}{o}		
\newcommand{\strong}{\debug \alpha}		
\newcommand{\lips}{\debug \beta}       
\newcommand{\scalar}{\debug \lambda}		
\newcommand*{\defeq}{\coloneqq}      
\newcommand*{\subs}{\leftarrow}      
\newcommand{\ballr}[2]{\ball_{#2}(#1)}      
\newcommand{\Union}{\bigcup}
\newcommand{\disjUnion}{\dot{\Union}}        
\DeclareMathOperator{\Jac}{Jac}		
\newcommand{\Jacf}[2]{\Jac_{#1}(#2)}    
\newcommand{\nRunsNew}{\debug t}		
\DeclareMathSymbol{\qm}{\mathalpha}{operators}{"3F}
\newcommand{\sadobj}{\debug{\mathcal{L}}}		
\newcommand{\minvar}{\debug \theta}		
\newcommand{\maxvar}{\debug \phi}		
\newcommand{\minvars}{\debug \Theta}		
\newcommand{\maxvars}{\debug \Phi}		
\DeclareMathOperator{\nikiso}{\debug{NI}}		
\DeclareMathOperator{\err}{Err}		
\newcommand{\reserr}[1][\radius]{\err_{#1}}		
\newcommand{\resnikiso}[1][\radius]{\nikiso_{#1}}		
\newcommand{\laststart}{\debug 0}
\newcommand{\paststart}{\debug {1/2}}
\newcommand{\interstart}{\debug {3/2}}
\newcommand{\afterstart}{\debug 2}
\newcommand{\lastlaststart}{{\debug {-1}}}
\newcommand{\pointnew}{\new\point}		
\newcommand{\intinterval}[2]{\{#1,...,#2\}}     
\NewDocumentCommand{\current}{O{\state}O{\run}}{\debug{#1_{#2}}}
\NewDocumentCommand{\inter}{O{\state}O{\run}}{\debug{#1_{#2+1/2}}}
\NewDocumentCommand{\update}{O{\state}O{\run}}{\debug{#1_{#2+1}}}
\NewDocumentCommand{\last}{O{\state}O{\run}}{\debug{#1_{#2-1}}}
\NewDocumentCommand{\past}{O{\state}O{\run}}{\debug{#1_{#2-1/2}}}
\NewDocumentCommand{\pastpast}{O{\state}O{\run}}{\debug{#1_{#2-3/2}}}
\NewDocumentCommand{\future}{O{\state}O{\run}}{\debug{#1_{#2+3/2}}}
\NewDocumentCommand{\lastlast}{O{\state}O{\run}}{\debug{#1_{#2-2}}}
\newcommand{\avg}[1][\state]{\bar{#1}}
\newcommand{\seqinf}[3][\N]{(#2_#3)_{#3\in#1}}
\newcommand{\nhdalt}{\nhd_{\start}}		
\newcommand{\nhdradius}{\debug r}		
\newcommand{\smallproba}{\debug \delta}		
\newcommand{\sumnoise}{\debug S}        
\newcommand{\sumnoisevar}{\debug R}     
\newcommand{\sumnoiseall}{\debug Q}     
\newcommand{\noisebound}{\debug \eps}       
\newcommand{\event}{\debug E}       
\newcommand{\eventalt}{\debug H}        
\newcommand{\vecfieldstoch}{\vecfield}      
\newcommand{\compprob}[1]{{#1}^c}      
\newcommand{\setexclude}[2]{#1\setminus#2}
\newcommand{\lprbound}{\debug{\mathcal{M}}}       
\newcommand{\stepsqsum}{\debug \Gamma}      
\newcommand{\cons}{\debug c}		
\newcommand{\consc}{\debug q}
\newcommand{\conscalt}{\alt\consc}
\newcommand{\seqitem}{\debug a}
\newcommand{\seqitemalt}{\alt \seqitem}
\newcommand{\intg}{\debug k}        
\newcommand{\tele}{\debug \mu}        
\newcommand{\strongstepm}{\debug b}		    
\newcommand{\youngeps}{\debug \eps}        
\newcommand{\matfunc}{\debug \phi}
\newcommand{\matmin}{\debug A}
\newcommand{\matmax}{\debug B}
\newcommand{\matlin}{\debug C}
\newcommand{\indic}{\debug \epsilon}
\newcommand{\PM}[1]{\todo[color=DodgerBlue!30,author=\textbf{Pan:}]{\small #1\\}}
\newcommand{\YGH}[1]{\todo[color=Orchid!20!LightGray,author=\textbf{Yu-Guan:}]{\small #1\\}}
\begin{document}


\title{On the Convergence of Single-Call\\
Stochastic Extra-Gradient Methods}		

\author{%
Yu-Guan Hsieh\\
Univ. Grenoble Alpes, LJK and ENS Paris\\
38000 Grenoble, France.\\
\texttt{yu-guan.hsieh@ens.fr}
\And
Franck Iutzeler\\
Univ. Grenoble Alpes, LJK\\
38000 Grenoble, France.\\
\texttt{franck.iutzeler@univ-grenoble-alpes.fr}
\And
Jérôme Malick\\
CNRS, LJK\\
38000 Grenoble, France.\\
\texttt{jerome.malick@univ-grenoble-alpes.fr}
\And
Panayotis Mertikopoulos\\
Univ. Grenoble Alpes, CNRS, Inria, Grenoble INP, LIG\\
38000 Grenoble, France.\\
\texttt{panayotis.mertikopoulos@imag.fr}}

\maketitle


\newcommand{\acli}[1]{\textit{\acl{#1}}}		
\newcommand{\aclip}[1]{\textit{\aclp{#1}}}		
\newcommand{\acdef}[1]{\textit{\acl{#1}} \textup{(\acs{#1})}\acused{#1}}		
\newcommand{\acdefp}[1]{\emph{\aclp{#1}} \textup(\acsp{#1}\textup)\acused{#1}}	

\newacro{NI}{Nikaido\textendash Isoda}
\newacro{VI}{variational inequality}
\newacroplural{VI}[VIs]{variational inequalities}
\newacro{SVI}{Stampacchia variational inequality}
\newacro{MVI}{Minty variational inequality}
\newacro{SP}{saddle-point}

\newacro{GAN}{generative adversarial network}
\newacro{LHS}{left-hand side}
\newacro{RHS}{right-hand side}
\newacro{iid}[i.i.d.]{independent and identically distributed}
\newacro{lsc}[l.s.c.]{lower semi-continuous}
\newacro{NE}{Nash equilibrium}
\newacroplural{NE}[NE]{Nash equilibria}

\newacro{SGD}{stochastic gradient descent}
\newacro{FBF}{Forward-Backward-Forward}
\newacro{OMD}{Optimistic Mirror Descent}
\newacro{EG}{Extra-Gradient}
\newacro{MP}{Mirror-Prox}
\newacro{SEG}[$1$-EG]{single-call extra-gradient}
\newacro{PEG}{Past Extra-Gradient}
\newacro{RG}{Reflected Gradient}
\newacro{OG}{Optimistic Gradient}

\begin{abstract}
%
%
Variational inequalities have recently attracted considerable interest in machine learning as a flexible paradigm for models that go beyond ordinary loss function minimization (such as generative adversarial networks and related deep learning systems).
In this setting, the optimal $\bigoh(1/\nRunsNew)$ convergence rate for solving smooth monotone \aclp{VI} is achieved by the \ac{EG} algorithm and its variants.
Aiming to alleviate the cost of an extra gradient step per iteration (which can become quite substantial in deep learning applications), several algorithms have been proposed as surrogates to \acl{EG} with a \emph{single} oracle call per iteration.
In this paper, we develop a synthetic view of such algorithms, and we complement the existing literature by showing that they retain a $\bigoh(1/\nRunsNew)$ ergodic convergence rate in smooth, deterministic problems.
Subsequently,
beyond the monotone deterministic case,
we also show that the last iterate of single-call, \emph{stochastic} extra-gradient methods still enjoys a $\bigoh(1/\nRunsNew)$ local convergence rate to solutions of \emph{non-monotone} \aclp{VI} that satisfy a second-order sufficient condition.
\end{abstract}

\allowdisplaybreaks		
\acresetall		

\section{Introduction}
\label{sec:intro}

Deep learning is arguably the fastest-growing field in artificial intelligence:
its applications range from image recognition and natural language processing to medical anomaly detection, drug discovery, and most fields where computers are required to make sense of massive amounts of data.
In turn, this has spearheaded a prolific research thrust in optimization theory with the twofold aim
of demystifying the successes of deep learning models
and
of providing novel methods to overcome their failures.

Introduced by \citet{GPAM+14}, \acp{GAN} have become the youngest torchbearers of the deep learning revolution and have occupied the forefront of this drive in more ways than one.
First, the adversarial training of deep neural nets has given rise to new challenges regarding the efficient allocation of parallelizable resources, the compatibility of the chosen architectures, etc.
Second, the loss landscape in \acp{GAN} is no longer that of a minimization problem but that of a zero-sum, min-max game
\textendash\ or, more generally, a \acdef{VI}.

Variational inequalities are a flexible and widely studied framework in optimization which, among others, incorporates minimization, \acl{SP}, Nash equilibrium, and fixed point problems.
As such, there is an extensive literature devoted to solving \aclp{VI} in different contexts;
for an introduction, see \cite{FP03,BC17} and references therein.
In particular, in the setting of monotone \aclp{VI} with Lipschitz continuous operators, it is well known that the optimal rate of convergence is $\bigoh(1/\nRunsNew)$, and that this rate is achieved by the \ac{EG} algorithm of \citet{Kor76} and its Bregman variant, the \ac{MP} algorithm of \citet{Nem04}.%
\footnote{\citet{Kor76} proved the method's asymptotic convergence for pseudomonotone \aclp{VI}.
The $\bigoh(1/\nRunsNew)$ convergence rate was later established by \citet{Nem04} with ergodic averaging.}

These algorithms require two projections and two oracle calls per iteration, so they are more costly than standard Forward-Backward / descent methods.
As a result, there are two complementary strands of literature aiming to reduce one (or both) of these cost multipliers \textendash\ that is,
the number of projections
and/or
the number of oracle calls per iteration.
The first class contains algorithms like the \ac{FBF} method of \citet{Tse00}, while the second focuses on gradient extrapolation mechanisms like Popov's modified Arrow\textendash Hurwicz algorithm \citep{Pop80}.

In deep learning, the latter direction has attracted considerably more interest than the former.
The main reason for this is that neural net training often does not involve constraints (and, when it does, they are relatively cheap to handle).
On the other hand, gradient calculations can become very costly, so a decrease in the number of oracle calls could offer significant practical benefits.
In view of this, our aim in this paper is
\begin{inparaenum}
[(\itshape i\upshape)]
\item
to develop a synthetic approach to methods that retain the anticipatory properties of the \acl{EG} algorithm while making a single oracle call per iteration;
and
\item
to derive quantitative convergence results for such \acdef{SEG} algorithms.
\end{inparaenum}


\begin{table}[tbp]
\centering
\footnotesize
\renewcommand{\arraystretch}{1.1}

\begin{tabular}{lcccccc}
\toprule
\multirow{2}{*}{}
	&\multicolumn{2}{c}{Lipschitz}
	&\multicolumn{2}{c}{Lipschitz $+$ Strong}
	\\
	\cmidrule(lr){2-3}
	\cmidrule(lr){4-5}
	&Ergodic
		&Last Iterate
	&Ergodic
		&Last Iterate
	\\
	\midrule
	Deterministic
	&\Ovalbox{$1/\nRunsNew$}
		&Unknown
	&$1/\nRunsNew$
	&$e^{-\rho\nRunsNew}$ \cite{GBVV+19,MOP19a,Mal15} 
	\\[\smallskipamount]
	Stochastic
	&$1/\sqrt{\nRunsNew}$ \cite{GBVV+19,CS16}
		&Unknown
	&\Ovalbox{$1/\nRunsNew$}
		&\Ovalbox{$1/\nRunsNew$}
	\\[.5ex]
	\bottomrule
\end{tabular}
\medskip
\caption{The best known global convergence rates for \acl{SEG} methods in monotone \ac{VI} problems;
logarithmic factors ignored throughout.
A box indicates a contribution from this paper.
}
\label{tab:overview}
\vspace{-2ex}
\end{table}


\Paragraph{Our contributions\afterhead}

Our first contribution complements the existing literature (reviewed below and in \cref{sec:algorithms}) by showing that the class of \ac{SEG} algorithms under study attains the optimal $\bigoh(1/\nRunsNew)$ convergence rate of the two-call method in deterministic \aclp{VI} with a monotone, Lipschitz continuous operator.
Subsequently, we show that this rate is also achieved in \emph{stochastic} \aclp{VI} with strongly monotone operators provided that the optimizer has access to an oracle with bounded variance (but not necessarily bounded second moments).

Importantly,
this stochastic result concerns both the method's
``ergodic average'' (a weighted average of the sequence of points generated by the algorithm)
as well as its
``last iterate'' (the last generated point).
The reason for this dual focus is that
averaging can be very useful in convex/monotone landscapes, but it is not as beneficial in non-monotone problems (where Jensen's inequality does not apply).
On that account, last-iterate convergence results comprise an essential stepping stone for venturing beyond monotone problems.

Armed with these encouraging results,
we then focus on \emph{non-monotone} problems and show that, with high probability, the method's last iterate exhibits a $\bigoh(1/\nRunsNew)$ local convergence rate to solutions of non-monotone \aclp{VI} that satisfy a second-order sufficient condition.
To the best of our knowledge, this is the first convergence rate guarantee of this type for stochastic, non-monotone \aclp{VI}.%

\Paragraph{Related work\afterhead}

The prominence of \acl{EG}/\acl{MP} methods in solving \aclp{VI} and \acl{SP} problems has given rise to a vast corpus of literature which we cannot hope to do justice here.
Especially in the context of adversarial networks, there has been a flurry of recent activity relating variants of the \acl{EG} algorithm to \ac{GAN} training, see \eg \cite{DISZ18,YSXJ+18,GBVV+19,GHPL+19,MLZF+19,CGFLJ19,LS19} and references therein.
For concreteness, we focus here on algorithms with a single-call structure and refer the reader to \cref{sec:algorithms,sec:det,sec:stoch} for additional details.

The first variant of \acl{EG} with a single oracle call per iteration dates back to \citet{Pop80}.
This algorithm was subsequently studied by, among others, \citet{CYLM+12}, \citet{RS13-NIPS,RS13-COLT} and \citet{GBVV+19};
see also \cite{Mal15,CS16} for a ``reflected'' variant, \cite{DISZ18,PDZC19,MOP19a,MOP19b} for an ``optimistic'' one, and \cref{sec:algorithms} for a discussion of the differences between these variants.
In the context of deterministic, strongly monotone \aclp{VI} with Lipschitz continuous operators, the last iterate of the method was shown to exhibit a geometric convergence rate \citep{Tse95,GBVV+19,Mal15,MOP19a};
similar geometric convergence results also extend to bilinear \acl{SP} problems \cite{Tse95,GBVV+19,PDZC19}, even though the operator involved is not strongly monotone.
In turn, this implies the convergence of the method's ergodic average, but at a $\bigoh(1/\nRunsNew)$ rate (because of the hysteresis of the average).
In view of this, the fact that \ac{SEG} methods retain the optimal $\bigoh(1/\nRunsNew)$ convergence rate in deterministic \aclp{VI} without strong monotonicity assumptions closes an important gap in the literature.%
\footnote{A few weeks after the submission of our paper, we were made aware of a very recent preprint by \citet{MOP19b} which also establishes a $\bigoh(1/\nRunsNew)$ convergence rate for the algorithm's ``optimistic'' variant in saddle-point problems (in terms of the \acl{NI} gap function).
To the best of our knowledge, this is the closest result to our own in the literature.}

At the local level, the geometric convergence results discussed above echo a surge of interest in local convergence guarantees of optimization algorithms applied to games and \acl{SP} problems,
see \eg \cite{LS19,pmlr-v89-adolphs19a,daskalakis2018limit,pmlr-v80-balduzzi18a} and references therein.
In more detail, \citet{LS19} proved local geometric convergence for several algorithms in possibly non-monotone saddle-point problems under a local smoothness condition.
In a similar vein, \citet{daskalakis2018limit} analyzed the limit points of (optimistic) gradient descent, and showed that local saddle points are stable stationary points;
subsequently, \citet{pmlr-v89-adolphs19a} and \citet{mazumdar2019finding} proposed a class of algorithms that eliminate stationary points which are not local \aclp{NE}.

Geometric convergence results of this type are inherently deterministic because they rely on an associated resolvent operator being firmly nonexpansive \textendash\ or, equivalently, rely on the use of the center manifold theorem.
In a stochastic setting, these techniques are no longer applicable because the contraction property cannot be maintained in the presence of noise;
in fact, unless the problem at hand is amenable to variance reduction \textendash\ \eg as in \cite{IJOT17,BMSV19,CGFLJ19} \textendash\  geometric convergence is not possible if the noise process is even weakly isotropic.
Instead, for monotone problems, \citet{CS16} and \citet{GBVV+19} showed that the ergodic average of the method attains a $\bigoh(1/\sqrt{\nRunsNew})$ convergence rate.
Our global convergence results for stochastic \aclp{VI} improve this rate to $\bigoh(1/\nRunsNew)$ in strongly monotone \aclp{VI} for both the method's ergodic average and its last iterate.
In the same light, our local $\bigoh(1/\nRunsNew)$ convergence results for \emph{non-monotone} \aclp{VI} provide a key extension of local, deterministic convergence results to a fully stochastic setting, all the while retaining the fastest convergence rate for monotone \aclp{VI}.

For convenience, our contributions relative to the state of the art are summarized in \cref{tab:overview}.

\section{Problem setup and blanket assumptions}
\label{sec:setup}

\Paragraph{Variational inequalities\afterhead}
We begin by presenting the basic \acl{VI} framework that we will consider throughout the sequel.
To that end,
let $\points$ be a nonempty closed convex subset of $\R^{\vdim}$,
and let $\vecfield\from\dspace\to\dspace$ be a single-valued operator on $\vecspace$.
In its most general form, the \acdef{VI} problem associated to $\vecfield$ and $\points$ can be stated as:
\begin{equation}
\label{eq:SVI}
\tag{VI}
\text{Find $\sol\in\points$ such that $\braket{\vecfield(\sol)}{\point - \sol} \geq 0$ for all $\point\in\points$}.
\end{equation}
To provide some intuition about \eqref{eq:SVI},
we discuss two important examples below:
\medskip

\begin{example}[Loss minimization]
\label{ex:function}
Suppose that $\vecfield = \nabla\obj$ for some smooth loss function $\obj$ on $\points = \R^{\vdim}$.
Then, $\sol\in\points$ is a solution to \eqref{eq:SVI} if and only if $\nabla\obj(\sol) = 0$, \ie if and only if $\sol$ is a critical point of $\obj$.
Of course, if $\obj$ is convex, any such solution is a global minimizer.\qed
\end{example}
\smallskip

\begin{example}[Min-max optimization]
\label{ex:saddle}
Suppose that $\points$ decomposes as $\points = \minvars\times\maxvars$ with $\minvars = \R^{\vdim_{1}}$, $\maxvars = \R^{\vdim_{2}}$, and assume $\vecfield = (\nabla_{\minvar}\sadobj,-\nabla_{\maxvar}\sadobj)$ for some smooth function $\sadobj(\minvar,\maxvar)$, $\minvar\in\minvars$, $\maxvar\in\maxvars$.
As in \cref{ex:function} above, the solutions to \eqref{eq:SVI} correspond to the critical points of $\sadobj$;
if, in addition, $\sadobj$ is convex-concave, any solution $\sol = (\sol[\minvar],\sol[\maxvar])$ of \eqref{eq:SVI} is a global \acli{SP}, \ie
\begin{equation}
\sadobj(\sol[\minvar],\maxvar) \leq \sadobj(\sol[\minvar],\sol[\maxvar])
	\leq \sadobj(\minvar,\sol[\maxvar])
	\quad
	\text{for all $\minvar\in\minvars$ and all $\maxvar\in\maxvars$.}
\end{equation}
Given the original formulation of \acp{GAN} as (stochastic) \acl{SP} problems \cite{GPAM+14}, this observation has been at the core of a vigorous literature at the interface between optimization, game theory, and deep learning, see \eg \cite{DISZ18,YSXJ+18,MLZF+19,GBVV+19,PDZC19,LS19,CGFLJ19} and references therein.\qed
\end{example}
\smallskip

The operator analogue of convexity for a function is \emph{monotonicity}, \ie
\begin{equation}
\label{eq:mono}
\braket{\vecfield(\pointalt) - \vecfield(\point)}{\pointalt - \point}
	\geq 0
	\quad
	\text{for all $\point,\pointalt \in \vecspace$}.
\end{equation}
Specifically, when $\vecfield = \nabla\obj$ for some sufficiently smooth function $\obj$, this condition is equivalent to $\obj$ being convex \citep{BC17}.
%
In this case, following \citet{Nes07,Nes09} and \citet{JNT11}, the quality of a candidate solution $\test\in\points$ can be assessed via the so-called \emph{error} (or \emph{merit}) \emph{function}
\begin{flalign}
\label{eq:error}
\err(\test)
	&= \sup_{\point\in\points} \braket{\vecfield(\point)}{\test - \point}
\intertext{and/or its restricted variant}
\label{eq:error-res}
\reserr(\test)
	&= \max_{\point\in\points_{\radius}} \braket{\vecfield(\point)}{\test - \point},
\end{flalign}
where $\points_{\radius} \equiv \points\cap\ballr{0}{\radius} = \setdef{\point\in\points}{\norm{\point} \leq \radius}$ denotes the ``restricted domain'' of the problem.
More precisely, we have the following basic result.

\begin{lemma}[\citeauthor{Nes07}, \citeyear{Nes07}]
\label{lem:error}
Assume $\vecfield$ is monotone.
If $\sol$ is a solution of \eqref{eq:SVI}, we have $\err(\sol) = 0$ and $\reserr(\sol) = 0$ for all sufficiently large $\radius$.
Conversely, if $\reserr(\test)=0$ for large enough $\radius>0$ and some $\test\in\points_{\radius}$, then $\test$ is a solution of \eqref{eq:SVI}.
\end{lemma}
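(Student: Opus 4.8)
The plan is to handle the two directions separately: the forward implication rests on monotonicity, while the converse requires a Minty-type line-segment argument to bridge the gap between the two variational formulations. For the forward direction, I would first note that evaluating the supremum defining $\err$ at the feasible point $\point = \sol$ gives $\braket{\vecfield(\sol)}{\sol - \sol} = 0$, so that $\err(\sol) \geq 0$ for free. The reverse inequality is where monotonicity enters: for an arbitrary $\point \in \points$, condition \eqref{eq:mono} (with $\point$ and $\sol$) yields $\braket{\vecfield(\point)}{\point - \sol} \geq \braket{\vecfield(\sol)}{\point - \sol}$, and the right-hand side is nonnegative because $\sol$ solves \eqref{eq:SVI}. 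Hence $\braket{\vecfield(\point)}{\sol - \point} \leq 0$ for every $\point \in \points$, which forces $\err(\sol) \leq 0$ and therefore $\err(\sol) = 0$. For the restricted merit function I would exploit $\points_{\radius} \subseteq \points$, so that $\reserr(\sol) \leq \err(\sol) = 0$, while as soon as $\radius \geq \norm{\sol}$ we have $\sol \in \points_{\radius}$ and the same evaluation at $\point = \sol$ gives $\reserr(\sol) \geq 0$; combining these shows $\reserr(\sol) = 0$ for all such $\radius$.

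For the converse, I would begin by unpacking the hypothesis $\reserr(\test) = 0$: since the bracket vanishes at $\point = \test$ and the maximum over $\points_{\radius}$ equals zero, it reads $\braket{\vecfield(\point)}{\point - \test} \geq 0$ for every $\point \in \points_{\radius}$, which is a \emph{Minty-type} inequality rather than the Stampacchia inequality \eqref{eq:SVI} we are after. To convert one into the other I would invoke the classical line-segment trick: fixing an arbitrary $\point \in \points$ and taking $\radius$ large enough that $\norm{\test} < \radius$, the points $\point_{\tau} = \test + \tau(\point - \test)$ lie in $\points_{\radius}$ for all sufficiently small $\tau \in (0,1]$ (using convexity of $\points$ together with continuity of the norm). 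Applying the Minty inequality at $\point_{\tau}$, using $\point_{\tau} - \test = \tau(\point - \test)$, and dividing by $\tau > 0$ gives $\braket{\vecfield(\point_{\tau})}{\point - \test} \geq 0$; letting $\tau \to 0^{+}$ and invoking continuity of $\vecfield$ (available under the paper's blanket smoothness assumptions) recovers $\braket{\vecfield(\test)}{\point - \test} \geq 0$. As $\point \in \points$ was arbitrary, $\test$ solves \eqref{eq:SVI}.

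I expect the converse to be the main obstacle, precisely because the merit function probes $\vecfield$ at the \emph{comparison} point $\point$ rather than at $\test$; this is exactly the discrepancy between the Minty and Stampacchia formulations, and closing it is not purely algebraic but requires the limiting argument above, hence both the convexity of $\points$ and the continuity of $\vecfield$. The role of the ``large enough $\radius$'' hypothesis is also worth flagging: it is what guarantees the entire segment $\point_{\tau}$ stays inside the restricted domain $\points_{\radius}$, so that the Minty inequality can legitimately be applied there and then passed to the limit; without it the restricted merit function would carry no information about directions $\point$ lying far from $\test$.
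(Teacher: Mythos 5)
Your proof is correct: the forward direction via monotonicity plus the solution property of $\sol$, and the converse via the Minty-to-Stampacchia line-segment argument (convexity of $\points$, $\norm{\test}<\radius$ to keep the segment inside $\points_{\radius}$, and continuity of $\vecfield$ to pass $\tau\to 0^{+}$) is exactly the standard argument. Note that the paper itself offers no proof of this lemma \textemdash\ it defers entirely to the cited reference of \citet{Nes07} \textemdash\ and your argument reproduces that classical proof faithfully, including the correct identification of where monotonicity is needed (only in the forward direction) and where continuity is needed (only in the converse).
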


\YGH{We use mean squared error in the strongly monotone case.}
In light of this result,
$\err$ and $\reserr$ will be among our principal measures of convergence in the sequel.

\Paragraph{Blanket assumptions\afterhead}
With all this in hand, we present below the main assumptions that will underlie the bulk of the analysis to follow.

\begin{assumption}
\label{asm:solution}
The solution set $\sols$ of \eqref{eq:SVI} is nonempty.
\end{assumption}

\begin{assumption}
\label{asm:Lipschitz}
The operator $\vecfield$ is $\lips$-Lipschitz continuous, \ie
\begin{equation}
\label{eq:Lipschitz}
\dnorm{\vecfield(\pointalt) - \vecfield(\point)}
	\leq \lips \norm{\pointalt - \point}
	\quad
	\text{for all $\point,\pointalt\in\vecspace$}.
\end{equation}
\end{assumption}

\begin{assumption}
\label{asm:mono}
The operator $\vecfield$ is monotone.
\end{assumption}


In some cases, we will also strengthen \cref{asm:mono} to:

\asmtag{\ref*{asm:mono}\textpar{s}}
\begin{assumption}
\label{asm:mono-strong}
The operator $\vecfield$ is $\strong$-strongly monotone, \ie
\begin{equation}
\label{eq:mono-strong}
\braket{\vecfield(\pointalt) - \vecfield(\point)}{\pointalt - \point}
	\geq \strong \norm{\pointalt - \point}^{2}
	\quad
	\text{for some $\strong>0$ and all $\point,\pointalt \in \vecspace$}.
\end{equation}
\end{assumption}

Throughout our paper, we will be interested in sequences of points $\state_{\run} \in \points$ generated by algorithms that can access the operator $\vecfield$ via a \emph{stochastic oracle} \citep{Nes04}.%
\footnote{Depending on the algorithm, the sequence index $\run$ may take positive integer or half-integer values (or both).}
Formally, this is a black-box mechanism which, when called at $\current\in\points$, returns the estimate
\begin{equation}
\label{eq:oracle}
\current[\vecfield]
	= \vecfield(\current) + \current[\noise],
\end{equation}
where $\current[\noise]\in\dspace$ is an additive noise variable satisfying the following hypotheses:
\begin{subequations}
\label{eq:noise}
\begin{alignat}{2}
&a)\;\;
	\textit{Zero-mean:}
	&\quad
	&\exof{\current[\noise] \given \current[\filter]}
		= 0.
		\hspace{17em}
	\label{eq:mean}
	\\
&b)\;\;
	\textit{Finite variance:}
	&\quad
	&\exof{\dnorm{\current[\noise]}^{2} \given \current[\filter]}
		\leq \noisevar.
	\label{eq:variance}
\end{alignat}
\end{subequations}
In the above, $\current[\filter]$ denotes the history (natural filtration) of $\current[\state]$,
so $\state_{\run}$ is adapted to $\current[\filter]$ by definition;
on the other hand, since the $\run$-th instance of $\current[\noise]$ is generated randomly from $\current[\state]$, $\current[\noise]$ is \emph{not} adapted to $\current[\filter]$.
Obviously, if $\noisevar=0$, we have the deterministic, \emph{perfect feedback} case $\current[\vecfield] = \vecfield(\current)$.

\vspace*{-1.5ex}
\section{Algorithms}
\label{sec:algorithms}

\Paragraph{The \acl{EG} algorithm\afterhead}
In the general framework outlined in the previous section, the \acf{EG} algorithm of \citet{Kor76} can be stated in recursive form as
\begin{equation}
\label{eq:EG}
\tag{EG}
\begin{aligned}
\inter
	&= \proj_{\points}(\current - \current[\step] \current[\vecfield])
	\\
\update
	&= \proj_{\points}(\current - \current[\step] \inter[\vecfield])
\end{aligned}
\end{equation}
where
$\proj_{\points}(\dvec) \defeq \argmin_{\point\in\points} \norm{\dvec - \point}$ 
denotes the Euclidean projection of $\dvec\in\dspace$ onto the closed convex set $\points$
and
$\current[\step] > 0$ is a variable step-size sequence.
Using this formulation as a starting point, the main idea behind the method can be described as follows:
at each $\run=\running$, the oracle is called at the algorithm's current 
\textendash\ or \emph{base} \textendash\ 
state $\current$ to generate an 
intermediate \textendash\ or \emph{leading} \textendash\
state $\inter$;
subsequently, the base state $\current$ is updated to $\update$ using gradient information from the leading state $\inter$, and the process repeats.
Heuristically, the extra oracle call allows the algorithm to ``anticipate'' the landscape of $\vecfield$ and, in so doing, to achieve improved convergence results relative to standard projected gradient / forward-backward methods;
for a detailed discussion, we refer the reader to \cite{FP03,Bub15} and references therein.

%
%

\Paragraph{Single-call variants of the \acl{EG} algorithm\afterhead}

Given the significant computational overhead of gradient calculations, a key desideratum is to drop the second oracle call in \eqref{eq:EG} while retaining the algorithm's ``anticipatory'' properties.
In light of this, we will focus on methods that perform a \emph{single} oracle call at the leading state $\inter$, but replace the update rule for $\inter$ (and, possibly, $\current$ as well) with a proxy that compensates for the missing gradient.
Concretely, we will examine the following family of \acdef{SEG} algorithms:

\begin{enumerate}[leftmargin=2em]
\addtolength{\itemsep}{\smallskipamount}

\item
\acdef{PEG} \cite{Pop80,CYLM+12,GBVV+19}:
\begin{alignat}{1}
\label{eq:PEG}
\tag{PEG}
\begin{split}
\inter
	&= \proj_{\points}(\current - \current[\step]\past[\vecfield])
	\\
\update
	&= \proj_{\points}(\current - \current[\step]\inter[\vecfield])
\end{split}
\intertext{
[Proxy:
use $\past[\vecfield]$ instead of $\current[\vecfield]$ in the calculation of $\inter$]}
\intertext{
\item
\acdef{RG} \cite{ChaPoc11,Mal15,CS16}:}
\label{eq:RG}
\tag{RG}
\begin{split}
\inter
	&= \current - (\last - \current)
	\\
\update
	&= \proj_{\points}(\current - \current[\step] \inter[\vecfield])
\end{split}
\intertext{[Proxy:
use $(\last - \current)/\current[\step]$ instead of $\current[\vecfield]$ in the calculation of $\inter$;
no projection]}%
\intertext{
\item
\acdef{OG} \cite{DISZ18,MOP19a,MOP19b,PDZC19}:}
\label{eq:OG}
\tag{OG}
\begin{split}
\inter
	&= \proj_{\points}(\current - \current[\step] \past[\vecfield])
	\\
\update
	&= \inter + \current[\step] \past[\vecfield] - \current[\step] \inter[\vecfield]
\end{split}
\end{alignat}
[Proxy:
use $\past[\vecfield]$ instead of $\current[\vecfield]$ in the calculation of $\inter$;
use $\inter + \current[\step] \past[\vecfield]$ instead of $\current$ in the calculation of $\update$;
no projection]
\end{enumerate}

These are the main algorithmic schemes that we will consider, so a few remarks are in order.
First, given the extensive literature on the subject, this list is not exhaustive;
see \eg
\cite{MOP19a,MOP19b,PDZC19} for a generalization of \eqref{eq:OG},
\cite{Mal19} for a variant that employs averaging to update the algorithm's base state $\current$,
and
\cite{GHPL+19} for a proxy defined via ``negative momentum''.
Nevertheless, the algorithms presented above appear to be the most widely used single-call variants of \eqref{eq:EG}, and they illustrate very clearly the two principal mechanisms for approximating missing gradients:
\begin{inparaenum}
[\upshape(\itshape i\hspace*{.5pt}\upshape)]
\item
using past gradients (as in the \ac{PEG} and \ac{OG} variants);
and/or
\item
using a difference of successive states (as in the \ac{RG} variant).
\end{inparaenum}

We also take this opportunity to provide some background and clear up some issues on terminology regarding the methods presented above.
First, the idea of using past gradients dates back at least to \citet{Pop80}, who introduced 
\eqref{eq:PEG} as a ``modified Arrow\textendash Hurwicz'' method a few years after the original paper of \citet{Kor76};
the same algorithm is called ``meta'' in \cite{CYLM+12} and ``extrapolation from the past'' in \cite{GBVV+19} (but see also the note regarding optimism below).
The terminology ``\acl{RG}'' and the precise formulation that we use here for \eqref{eq:RG} is due to \citet{Mal15}.
The well-known primal-dual algorithm of \citet{ChaPoc11} can be seen as a one-sided, alternating variant of the method for \acl{SP} problems;
see also \cite{YSXJ+18} for a more recent take.

Finally, the terminology ``optimistic'' is due to \citet{RS13-COLT,RS13-NIPS}, who provided a unified view of \eqref{eq:PEG} and \eqref{eq:EG} based on the sequence of oracle vectors used to update the algorithm's leading state $\inter$.%
\footnote{More precisely, \citet{RS13-COLT,RS13-NIPS} use the term \acf{OMD} in reference to the \acl{MP} method of \citet{Nem04}, itself a variant of \eqref{eq:EG} with projections defined by means of a Bregman function;
for a related treatment, see \citet{Nes07} and \citet{JNT11}.}
Because the framework of \cite{RS13-COLT,RS13-NIPS} encompasses two different algorithms, there is some danger of confusion regarding the use of the term ``optimism'';
in particular, both \eqref{eq:EG} and \eqref{eq:PEG} can be seen as instances of optimism.
The specific formulation of \eqref{eq:OG} that we present here is the projected version of the algorithm considered by \citet{DISZ18};%
\footnote{To see this, note that the difference between two consecutive intermediate steps $\past$ and $\inter$ can be written as $\inter = \proj_{\points}(\past - (\last[\step]+\current[\step])\past[\vecfield] + \last[\step]\pastpast[\vecfield])$.
Writing \eqref{eq:OG} in the form presented above shows that \eqref{eq:OG} can also be viewed as a single-call variant of the \ac{FBF} method of \citet{Tse00}.}
by contrast, the ``optimistic'' method of \citet{MLZF+19} is equivalent to \eqref{eq:EG} \textendash\ not \eqref{eq:PEG} or \eqref{eq:OG}.

The above shows that there can be a broad array of \aclp{SEG} methods depending on the specific proxy used to estimate the missing gradient, whether it is applied to the algorithm's base or leading state, when (or where) a projection operator is applied, etc.
The contact point of all these algorithms is the unconstrained setting ($\points = \vecspace$) where they are exactly equivalent:
\smallskip

\begin{proposition}
\label{prop:variants}
Suppose that the \ac{SEG} methods presented above share the same initialization, $\state_{\laststart} = \state_{\start} \in \points$, $\vecfield_{\paststart} = 0$, and are run with the same, constant step-size $\current[\step] \equiv \step$ for all $\run\geq1$.
If $\points = \vecspace$, the generated iterates $\current$ coincide for all $\run\geq1$.
\end{proposition}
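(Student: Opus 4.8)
The plan is to exploit the fact that, when $\points = \vecspace$, the Euclidean projection $\proj_{\points}$ collapses to the identity $\idm$, so that each of \eqref{eq:PEG}, \eqref{eq:RG}, and \eqref{eq:OG} becomes an explicit recursion in $\vecspace$. The strategy is to show that all three recursions share a common \emph{base-state update}, namely $\update = \current - \step\inter[\vecfield]$, and a common \emph{leading-state rule}, so that a single induction synchronizing the oracle queries completes the argument.

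First I would dispatch \eqref{eq:OG}. Substituting its leading step $\inter = \current - \step\past[\vecfield]$ into its base step and using $\current[\step]\equiv\step$ gives
\[
\update = \inter + \step\past[\vecfield] - \step\inter[\vecfield] = (\current - \step\past[\vecfield]) + \step\past[\vecfield] - \step\inter[\vecfield] = \current - \step\inter[\vecfield],
\]
so \eqref{eq:OG} has exactly the same leading state and the same base update as \eqref{eq:PEG}. Hence these two methods generate identical sequences under any shared initialization, and it remains only to compare \eqref{eq:PEG} with \eqref{eq:RG}. Since both already agree on the base update $\update = \current - \step\inter[\vecfield]$, I would reduce everything to showing that their leading states coincide. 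The key observation is that this common base update, read one index earlier, gives $\current = \last - \step\past[\vecfield]$, i.e.\ $\step\past[\vecfield] = \last - \current$. Substituting this into the \eqref{eq:PEG} leading step produces
\[
\inter = \current - \step\past[\vecfield] = \current - (\last - \current) = 2\current - \last,
\]
which is precisely the reflection defining the \eqref{eq:RG} leading state. Thus, whenever the base update has held at the previous index, the two leading states agree.

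What remains — and what I expect to be the only delicate point — is the bookkeeping of the half-integer indices in the induction, together with the base case. The manipulation above presupposes $\current = \last - \step\past[\vecfield]$ at every index; for $\run \geq 2$ this is simply the base update of the preceding iteration, but at $\run = 1$ it reduces to $\state_{\start} = \state_{\laststart} - \step\vecfield_{\paststart}$, which holds exactly because the prescribed initialization sets $\state_{\laststart} = \state_{\start}$ and $\vecfield_{\paststart} = 0$. With the base case secured, a routine induction shows that at each step all three algorithms query the oracle at the same leading state, hence obtain the same $\vecfield$-value, hence compute the same next base state; the common initialization then propagates equality of all iterates $\current$ for $\run \geq 1$.

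The initialization conditions are worth flagging as the crux of the result: they are engineered precisely so that the \eqref{eq:RG} ``reflection'' coincides with the \eqref{eq:PEG} ``past-gradient'' step at the very first iteration, where no genuine past gradient is yet available. Away from this first step the equivalence is essentially algebraic, driven entirely by the shared base-state recursion and the collapse of the projection in the unconstrained regime.
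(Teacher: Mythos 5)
Your proof is correct: once $\points=\vecspace$ collapses the projections, \eqref{eq:OG} rearranges directly into \eqref{eq:PEG}, and the identity $\step\past[\vecfield] = \last - \current$ furnished by the previous base update (and, at $\run=1$, by the prescribed initialization $\state_{\laststart}=\state_{\start}$, $\vecfield_{\paststart}=0$) identifies the \eqref{eq:RG} reflection with the \eqref{eq:PEG} leading step, after which a routine induction synchronizes all three sequences. This is precisely the ``simple rearrangement of the update rules'' that the paper invokes when omitting the proof, so your argument coincides with the intended one.
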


The proof of this proposition follows by a simple rearrangement of the update rules for \eqref{eq:PEG}, \eqref{eq:RG} and \eqref{eq:OG}, so we omit it.
In the projected case, the \ac{SEG} updates presented above are no longer equivalent \textendash\ though, of course, they remain closely related.

\section{Deterministic analysis}
\label{sec:det}

We begin with the deterministic analysis, \ie when the optimizer receives oracle feedback of the form \eqref{eq:oracle} with $\noisedev=0$.
In terms of presentation, we keep the global and local cases separated and we interleave our results for the generated sequence $\current$ and its
\emph{ergodic average}.
To streamline our presentation, we defer the details of the proofs to the paper's supplement and only discuss here the main ideas.

\subsection{Global convergence}
\label{sec:det-global}

Our first result below shows that the algorithms under study achieve the optimal $\bigoh(1/\nRunsNew)$ ergodic convergence rate in monotone problems with Lipschitz continuous operators.

\smallskip
\begin{theorem}
\label{thm:det-global-erg}
Suppose that $\vecfield$ satisfies \cref{asm:solution,asm:Lipschitz,asm:mono}.
Assume further that a \ac{SEG} algorithm is run with perfect oracle feedback and a constant step-size $\step < 1/(\cons\lips)$, where $\cons = 1 + \sqrt{2}$ for the \ac{RG} variant and $\cons = 2$ for the \ac{PEG} and \ac{OG} variants.
Then, for 
all $\radius>0$, we have
\begin{equation}
\label{eq:rate-det-erg}
\reserr\left(\avg_{\nRunsNew}\right)
	\leq \frac{\radius^{2}+\norm{\state_\start-\state_{\paststart}}^2}{2\step\nRunsNew}
\end{equation}
where  $\avg_{\nRunsNew} = \nRunsNew^{-1} \sum_{\runalt=\start}^{\nRunsNew} \inter[\state][\runalt]$ is the ergodic average of the algorithm's sequence of leading states.
\end{theorem}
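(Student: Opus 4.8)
The plan is to control the restricted merit function $\reserr(\avg_{\nRunsNew})$ by a telescoping energy estimate on the leading states $\inter$. First I would invoke monotonicity (\cref{asm:mono}), which gives $\braket{\vecfield(\point)}{\inter[\state][\runalt] - \point} \le \braket{\inter[\vecfield][\runalt]}{\inter[\state][\runalt] - \point}$ for every comparison point $\point$, so that, by the definition of the ergodic average,
\begin{equation*}
\reserr(\avg_{\nRunsNew})
  = \max_{\point\in\points_{\radius}}\braket{\vecfield(\point)}{\avg_{\nRunsNew} - \point}
  \le \max_{\point\in\points_{\radius}}\frac{1}{\nRunsNew}\sum_{\runalt=\start}^{\nRunsNew}\braket{\inter[\vecfield][\runalt]}{\inter[\state][\runalt] - \point}.
\end{equation*}
It therefore suffices to bound $\sum_{\runalt}\braket{\inter[\vecfield][\runalt]}{\inter[\state][\runalt] - \point}$ by $(\radius^2 + \norm{\state_\start - \state_{\paststart}}^2)/(2\step)$ uniformly over $\point \in \points_{\radius}$.

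\textbf{Per-step energy inequality.} The workhorse is the projection lemma applied to the update step $\update = \proj_\points(\current - \step\inter[\vecfield])$ (which holds verbatim for \eqref{eq:PEG} and \eqref{eq:RG}, and for \eqref{eq:OG} through the equivalent projected recursion on consecutive leading states noted in the main text): for every $\point\in\points$,
\begin{equation*}
2\step\braket{\inter[\vecfield]}{\update - \point}
  \le \norm{\current - \point}^2 - \norm{\update - \point}^2 - \norm{\current - \update}^2.
\end{equation*}
Splitting $\braket{\inter[\vecfield]}{\inter - \point} = \braket{\inter[\vecfield]}{\update - \point} + \braket{\inter[\vecfield]}{\inter - \update}$, the first summand is handled above and the crux becomes the cross term $\braket{\inter[\vecfield]}{\inter - \update}$. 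Here I would exploit the rule producing the leading state $\inter$ --- a second projection inequality for \eqref{eq:PEG}/\eqref{eq:OG}, or the reflection identity $\inter - \current = \current - \last$ for \eqref{eq:RG} --- so that the proxy driving $\inter$ differs from $\inter[\vecfield]$ only through the increment $\inter[\vecfield] - \past[\vecfield]$, which \cref{asm:Lipschitz} bounds by $\lips\norm{\inter - \past}$. This produces a genuinely telescoping contribution together with a residual that is quadratic in the increments $\norm{\current - \update}$ and $\norm{\inter - \current}$; the step-size restriction $\step < 1/(\cons\lips)$ is exactly what makes this residual nonpositive after a Young-type split, the balancing yielding $\cons = 2$ for \eqref{eq:PEG}/\eqref{eq:OG} and $\cons = 1+\sqrt{2}$ for \eqref{eq:RG}.

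\textbf{Telescoping and conclusion.} Summing the resulting per-step inequality over $\runalt = \start,\dots,\nRunsNew$, the distance terms collapse to $\norm{\state_\start - \point}^2 - \norm{\update[\state][\nRunsNew] - \point}^2$, the Lipschitz-telescoping terms collapse to the single boundary contribution $\norm{\state_\start - \state_{\paststart}}^2$ (using the initialization $\state_\laststart = \state_\start$, $\vecfield_{\paststart} = 0$), and the $\step$-controlled residuals are discarded as nonpositive. Dropping $-\norm{\update[\state][\nRunsNew] - \point}^2 \le 0$ and bounding $\norm{\state_\start - \point}^2 \le \radius^2$ over $\points_{\radius}$ (taking, as is standard, the initialization at the prox-center) gives
\begin{equation*}
\sum_{\runalt=\start}^{\nRunsNew} \braket{\inter[\vecfield][\runalt]}{\inter[\state][\runalt] - \point}
  \le \frac{\radius^2 + \norm{\state_\start - \state_{\paststart}}^2}{2\step};
\end{equation*}
dividing by $\nRunsNew$ and maximizing over $\point\in\points_{\radius}$ then closes the argument.

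\textbf{Main obstacle.} The decisive step is the per-step cross-term estimate of the second paragraph: the sharp thresholds $\cons = 2$ and $\cons = 1+\sqrt{2}$ hinge on how tightly $\braket{\inter[\vecfield]}{\inter - \update}$ can be absorbed into the negative increment terms, and this calculation must be carried out separately for the projected variants and for the reflection-based \eqref{eq:RG} (where no leading projection is available), which is precisely the source of the different constants.
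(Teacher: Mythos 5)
Your proposal is correct and takes essentially the same route as the paper: your monotonicity-plus-averaging reduction is precisely the content of \cref{lem:descent}, and your per-step energy estimate \textendash\ three-point projection inequality for the update step, cross term controlled via the rule generating the leading state, then Lipschitz continuity and a Young-type split absorbed by the step-size threshold to leave a telescoping term \textendash\ is exactly how the paper establishes the quasi-descent inequality \eqref{eq:descent} for each variant through \cref{lem:4points}. The only cosmetic difference is bookkeeping order (you apply monotonicity before summing rather than after), which changes nothing.
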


This result shows that
the \ac{EG} and \ac{SEG} algorithms share the same convergence rate guarantees, so we can safely drop one gradient calculation per iteration in the monotone case.
The proof of the theorem is based on the following technical lemma which enables us to treat the different variants of the \ac{SEG} method in a unified way.

\begin{lemma}
\label{lem:descent}
Assume that $\vecfield$ satisfies \cref{asm:mono} \textpar{monotonicity}.
Suppose further that the sequence $(\current)_{\run\in\N/2}$ of points in $\vecspace$ satisfies the following ``quasi-descent'' inequality with 
$\current[\tele][\runalt],\current[\scalar][\runalt]\geq0$:
\begin{equation}
\label{eq:descent}
\norm{\update[\state][\runalt] - \arpoint}^{2}
	\leq \norm{\current[\state][\runalt] - \arpoint}^{2}
	- 2 \current[\scalar][\runalt] \product{\vecfield(\inter[\state][\runalt])}{\inter[\state][\runalt] - \arpoint}
	+ \current[\tele][\runalt] - \update[\tele][\runalt]
\end{equation}
for all $\arpoint\in\points_\radius$
and all $\runalt\in\{\start,\dotsc,\nRunsNew\}$.
Then, 
\begin{equation}
\label{eq:template}
\reserr\left(
    \frac{\sum_{\runalt=\start}^{\nRunsNew} \current[\scalar][\runalt] \inter[\state][\runalt]}{\sum_{\runalt=\start}^{\nRunsNew} \current[\scalar][\runalt]}\right)
	\leq \frac{\radius^{2} + \tele_\start}{2 \sum_{\runalt=\start}^{\nRunsNew} \current[\scalar][\runalt]}.
\end{equation}
\end{lemma}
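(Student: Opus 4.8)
The plan is to telescope the quasi-descent inequality~\eqref{eq:descent}, and then to use monotonicity to convert the resulting estimate into a bound on the restricted merit function $\reserr$ of the weighted average. First I would rearrange~\eqref{eq:descent} so as to isolate the inner-product term,
\[
2 \current[\scalar][\runalt] \braket{\vecfield(\inter[\state][\runalt])}{\inter[\state][\runalt] - \arpoint}
    \leq \norm{\current[\state][\runalt] - \arpoint}^{2} - \norm{\update[\state][\runalt] - \arpoint}^{2}
    + \current[\tele][\runalt] - \update[\tele][\runalt],
\]
and sum this over $\runalt = \start,\dotsc,\nRunsNew$. The squared-distance differences telescope to $\norm{\state_\start - \arpoint}^{2} - \norm{\update[\state][\nRunsNew] - \arpoint}^{2}$ and the $\tele$-differences telescope to $\tele_\start - \update[\tele][\nRunsNew]$; discarding the two nonnegative leftovers $\norm{\update[\state][\nRunsNew] - \arpoint}^{2} \geq 0$ and $\update[\tele][\nRunsNew] \geq 0$ then yields
\[
2 \sum_{\runalt=\start}^{\nRunsNew} \current[\scalar][\runalt] \braket{\vecfield(\inter[\state][\runalt])}{\inter[\state][\runalt] - \arpoint}
    \leq \norm{\state_\start - \arpoint}^{2} + \tele_\start
    \quad\text{for every } \arpoint \in \points_\radius.
\]

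The crucial step, which I expect to be the main obstacle, is to pass from the operator evaluated at the iterates $\inter[\state][\runalt]$ to the operator evaluated at the fixed test point $\arpoint$ — this is exactly what the merit function in~\eqref{eq:error-res} requires, since $\reserr$ freezes $\vecfield$ at the comparator while the descent inequality naturally produces $\vecfield$ at the iterates. Here monotonicity (\cref{asm:mono}) supplies, for each $\runalt$, the inequality $\braket{\vecfield(\arpoint)}{\inter[\state][\runalt] - \arpoint} \leq \braket{\vecfield(\inter[\state][\runalt])}{\inter[\state][\runalt] - \arpoint}$. Because $\arpoint$ is now held fixed, the map $\point \mapsto \braket{\vecfield(\arpoint)}{\point - \arpoint}$ is affine, so with $\Lambda = \sum_{\runalt=\start}^{\nRunsNew} \current[\scalar][\runalt]$ and $\avg_\nRunsNew = \Lambda^{-1} \sum_{\runalt=\start}^{\nRunsNew} \current[\scalar][\runalt] \inter[\state][\runalt]$ the weighted average from~\eqref{eq:template}, the weighted sum collapses exactly: $\sum_{\runalt=\start}^{\nRunsNew} \current[\scalar][\runalt] \braket{\vecfield(\arpoint)}{\inter[\state][\runalt] - \arpoint} = \Lambda \braket{\vecfield(\arpoint)}{\avg_\nRunsNew - \arpoint}$. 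This is the step that lets a bound on the individual leading states become a bound on the merit function of their average.

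Chaining the last two displays gives, for every $\arpoint \in \points_\radius$,
\[
\braket{\vecfield(\arpoint)}{\avg_\nRunsNew - \arpoint}
    \leq \frac{\norm{\state_\start - \arpoint}^{2} + \tele_\start}{2\Lambda}.
\]
Finally I would take the maximum over $\arpoint \in \points_\radius$: by definition~\eqref{eq:error-res} the left-hand side becomes $\reserr(\avg_\nRunsNew)$, while on the restricted domain the boundary distance is controlled by $\norm{\state_\start - \arpoint}^{2} \leq \radius^{2}$ (using $\norm{\arpoint} \leq \radius$ together with the initialization convention), which delivers~\eqref{eq:template}. Beyond the monotonicity-plus-affineness maneuver flagged above, the remaining care is bookkeeping: tracking the half-integer indices of $(\current)_{\run\in\N/2}$ through the telescope and checking that the discarded terms are indeed nonnegative.
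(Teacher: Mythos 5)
Your proof is correct and follows essentially the same route as the paper's: telescope the quasi-descent inequality, discard the nonnegative leftover terms, apply monotonicity to freeze the operator at the comparator $\arpoint$, use linearity of $\point \mapsto \braket{\vecfield(\arpoint)}{\point - \arpoint}$ to collapse the weighted sum onto the average, and maximize over $\points_\radius$. The only point you leave implicit (``the initialization convention'') is made explicit in the paper: it secures $\norm{\state_\start - \arpoint}^{2} \leq \radius^{2}$ by redefining the restricted domain as $\points_\radius = \points \intersect \ballr{\state_\start}{\radius}$, \ie recentering the ball at $\state_\start$ rather than at the origin.
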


\vspace{0.4em}
\begin{remark}
For \cref{ex:function,ex:saddle} it is possible to state both \cref{thm:det-global-erg} and \cref{lem:descent} with more adapted measures.
We refer the readers to the supplement for more details.
\end{remark}

The use of \cref{lem:descent} is tailored to time-averaged sequences like $\avg_{\nRunsNew}$, and relies on establishing a suitable ``quasi-descent inequality'' of the form \eqref{eq:descent} for the iterates of \ac{SEG}.
Doing this requires in turn a careful comparison of successive iterates of the algorithm via the Lipschitz continuity assumption for $\vecfield$;
we defer the precise treatment of this argument to the paper's supplement.

On the other hand, because the role of averaging is essential in this argument, the convergence of the algorithm's last iterate requires significantly different techniques.
To the best of our knowledge, there are no comparable convergence rate guarantees for $\current$ under \cref{asm:solution,asm:Lipschitz,asm:mono};
however, if \cref{asm:mono} is strengthened to \cref{asm:mono-strong}, the convergence of $\current$ to the (necessarily unique) solution of \eqref{eq:SVI} occurs at a geometric rate.
For completeness, we state here a consolidated version of the geometric convergence results of \citet{Mal15}, \citet{GBVV+19}, and \citet{MOP19a}.

\begin{theorem}
\label{thm:det-global-last}
Assume that $\vecfield$ satisfies \cref{asm:solution,asm:Lipschitz,asm:mono-strong}, and let $\sol$ denote the \textpar{necessarily unique} solution of \eqref{eq:SVI}.
If a \ac{SEG} algorithm is run with a sufficiently small step-size $\step$, the generated sequence $\current$ converges to $\sol$ at a rate of $\norm{\current - \sol} = \bigoh(\exp(-\rho\,\run))$ for some $\rho>0$.
\end{theorem}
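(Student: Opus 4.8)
The plan is to prove geometric convergence by exhibiting an energy function that contracts by a constant factor at every iteration. By \cref{prop:variants}, the three variants coincide when $\points = \vecspace$, so it suffices to analyze one representative scheme in the unconstrained case; the projected case follows from the same argument, the only change being that equalities coming from the update rules are replaced by the variational inequality characterizing $\proj_{\points}$. I would present the argument for \eqref{eq:PEG} and indicate the (routine) modifications needed for \eqref{eq:RG} and \eqref{eq:OG}.

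First I would derive a one-step energy inequality. Applying the variational characterization of the projection $\update = \proj_{\points}(\current - \step\inter[\vecfield])$ at the comparison point $\point = \sol$, together with the three-point identity, yields
\begin{equation}
\norm{\update[\state] - \sol}^{2}
	\leq \norm{\current[\state] - \sol}^{2}
	- \norm{\update[\state] - \current[\state]}^{2}
	- 2\step\braket{\inter[\vecfield]}{\update[\state] - \sol}.
\end{equation}
Writing $\update[\state] - \sol = (\inter[\state] - \sol) + (\update[\state] - \inter[\state])$ and invoking \cref{asm:mono-strong} together with the defining inequality $\braket{\vecfield(\sol)}{\inter[\state] - \sol} \geq 0$ for \eqref{eq:SVI} gives $\braket{\inter[\vecfield]}{\inter[\state] - \sol} \geq \strong\norm{\inter[\state] - \sol}^{2}$, so that the strong-monotonicity gain $-2\step\strong\norm{\inter[\state] - \sol}^{2}$ appears explicitly in the energy inequality.

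The crux of the proof is to control the residual cross term $-2\step\braket{\inter[\vecfield]}{\update[\state] - \inter[\state]}$. This is exactly where the single-call structure bites: because $\inter[\state]$ is built from the \emph{stale} gradient $\past[\vecfield]$ rather than from $\current[\vecfield]$, the mismatch $\inter[\vecfield] - \past[\vecfield]$ persists, in contrast to full \eqref{eq:EG} where the leading state already uses the fresh gradient. Nonexpansiveness of the projection gives $\norm{\update[\state] - \inter[\state]} \leq \step\norm{\inter[\vecfield] - \past[\vecfield]} \leq \step\lips\norm{\inter[\state] - \past[\state]}$ by \cref{asm:Lipschitz}, and after a Young's-inequality split the cross term decomposes into a piece that is absorbed by the gain $-2\step\strong\norm{\inter[\state] - \sol}^{2}$ and a piece proportional to the squared gradient mismatch. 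To make these residuals telescope, I would augment the candidate Lyapunov function to
\begin{equation}
\current[V]
	= \norm{\current[\state] - \sol}^{2}
	+ \cons\,\step^{2}\,\norm{\inter[\vecfield] - \past[\vecfield]}^{2},
\end{equation}
for a suitable constant $\cons > 0$, and show that for every constant step-size below an explicit threshold $\step_{0}(\strong,\lips)$ one has $\update[V] \leq (1 - \rho')\,\current[V]$ with $\rho' = \Theta(\strong\step) > 0$.

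Iterating this contraction yields $\current[V] \leq (1-\rho')^{\run}\,V_{\start}$, and since $\norm{\current[\state] - \sol}^{2} \leq \current[V]$, we obtain $\norm{\current[\state] - \sol} = \bigoh(\exp(-\rho\,\run))$ with $\rho = -\tfrac{1}{2}\log(1-\rho') > 0$, as claimed; uniqueness of $\sol$ is immediate from \cref{asm:mono-strong}. The main obstacle is the third step: taming the stale-gradient error using only Lipschitz continuity and folding it into the strong-monotonicity gain. It is precisely this balancing that forces the step-size to be small and dictates both the weight $\cons$ and the memory term in $\current[V]$; closing the constants requires careful Young's-inequality bookkeeping, which I would defer to the supplement.
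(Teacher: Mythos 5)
The paper does not actually prove \cref{thm:det-global-last}: it is stated ``for completeness'' as a consolidated version of the geometric convergence results of \citet{Mal15}, \citet{GBVV+19} and \citet{MOP19a}, and no proof of it appears in the supplement. Your proposal should therefore be measured against those references and against the paper's own stochastic machinery, and on that score it is sound: the Lyapunov-with-memory contraction you describe is precisely the standard argument, and it is in fact the deterministic specialization of the paper's proof of \cref{thm:stoch-global}. There, \cref{lem:4points}~\ref{lem:4points-b} (projection inequality plus Young) produces the gradient-mismatch term $\current[\step]^2\norm{\inter[\vecfield]-\past[\vecfield]}^2$, strong monotonicity is turned into a contraction factor via $\norm{\current-\sol}^2 \le 2\norm{\current-\inter}^2 + 2\norm{\inter-\sol}^2$, and the resulting recursion \eqref{eq:peg_stoch_rec}, taken with $\noisevar = 0$ and a constant step-size $\step$, reads
\begin{equation*}
\norm{\update-\sol}^2 + \step^2\norm{\inter[\vecfield]-\past[\vecfield]}^2
\le (1-\strong\step)\left(\norm{\current-\sol}^2 + \step^2\norm{\past[\vecfield]-\pastpast[\vecfield]}^2\right),
\end{equation*}
which is exactly the contraction $\update[V] \le (1-\rho')\current[V]$ of your augmented energy, up to an index shift in the memory term. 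Iterating then gives the claimed geometric rate, so your plan closes for \eqref{eq:PEG}.

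Two caveats. First, be careful how you control the cross term $-2\step\braket{\inter[\vecfield]}{\update-\inter}$ in the constrained case: a naive Cauchy--Schwarz leaves the factor $\norm{\vecfield(\inter)}$, which is not globally bounded, so nonexpansiveness alone does not suffice. One must pair the stale gradient against the projection inequality defining $\inter$ (with $\update\in\points$ as the trial point), so that only the difference $\vecfield(\inter)-\vecfield(\past)$ survives \textendash\ this is exactly what \cref{lem:4points} packages. You allude to this (``replaced by the variational inequality characterizing $\proj_{\points}$''), but it is the one step of your sketch that could silently go wrong if done by nonexpansiveness plus Young alone. Second, dismissing \eqref{eq:RG} and \eqref{eq:OG} as routine modifications is too quick: \cref{prop:variants} only applies when $\points=\vecspace$, and for \eqref{eq:RG} the leading state $\inter$ may lie \emph{outside} $\points$, so your key inequality $\braket{\vecfield(\sol)}{\inter-\sol}\ge 0$ is unavailable there. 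This is the same obstruction that forces the paper, in its proof of \cref{thm:det-local}, to strengthen the regularity notion to the tangent span for \eqref{eq:RG}; handling it requires a genuinely different treatment of the reflection step (as in \citet{Mal15}), not a cosmetic change.
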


\subsection{Local convergence}
\label{sec:det-local}

We continue by presenting a local convergence result for deterministic, \emph{non-monotone} problems.
To state it, we will employ the following notion of regularity in lieu of \cref{asm:solution,asm:Lipschitz,asm:mono} and \labelcref{asm:mono-strong}.

\begin{definition}
\label{def:regular}
We say that $\sol$ is a \emph{regular solution} of \eqref{eq:SVI} if $\vecfield$ is $C^{1}$-smooth in a neighborhood of $\sol$ and the Jacobian $\Jacf{\vecfield}{\sol}$ is positive-definite along rays emanating from $\sol$, \ie
\begin{equation}
\label{eq:Jac}
\tvec^{\top} \Jacf{\vecfield}{\sol} \tvec
	\equiv \sum_{i,j=1}^{\vdim} \tvec_{i} \frac{\pd\vecfield_{i}}{\pd\point_{j}}(\sol) \tvec_{j}
	> 0
	\quad
\end{equation}
for all $\tvec\in\vecspace\setminus\!\{0\}$ that are tangent to $\points$ at $\sol$.
\end{definition}


This notion of regularity  is an extension of similar conditions that have been employed in the local analysis of loss minimization and \acl{SP} problems.
More precisely, if $\vecfield = \nabla\obj$ for some loss function $\obj$, this definition is equivalent to positive-definiteness of the Hessian along qualified constraints \citep[Chap.\,3.2]{bertsekas1997nonlinear}.
As for \acl{SP} problems and smooth games, variants of this condition can be found in several different sources, see \eg \cite{Ros65,FK07,MZ19,ratliff2013characterization,LS19} and references therein.

Under this condition, we obtain the following local geometric convergence result for \ac{SEG} methods.

\medskip
\begin{theorem}
\label{thm:det-local}
Let $\sol$ be a regular solution of \eqref{eq:SVI}.
If a \ac{SEG} method is run with perfect oracle feedback and is initialized sufficiently close to $\sol$ with a sufficiently small constant step-size,%
we have $\norm{\current - \sol} = \bigoh(\exp(-\rho\,\run))$ for some $\rho>0$.
\end{theorem}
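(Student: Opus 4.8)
The plan is to treat local exponential convergence as a statement about the stability of $\sol$ as a fixed point of the \ac{SEG} iteration, and to reduce it to a spectral condition on the linearization. First I would rewrite each of the variants \eqref{eq:PEG}, \eqref{eq:RG}, \eqref{eq:OG} as an autonomous iteration on an augmented state that carries the one-step memory explicitly: for \eqref{eq:PEG} and \eqref{eq:OG} the natural state is $w_\run = (\current, \past[\vecfield])$, while for \eqref{eq:RG} it is $(\current, \last)$. In each case the update takes the form $w_{\run+1} = \mathcal{T}_\step(w_\run)$ for a map $\mathcal{T}_\step$ that is $C^{1}$ near the lifted solution $w^{\star} = (\sol, 0)$ (resp. $(\sol,\sol)$ for \eqref{eq:RG}), which is a fixed point because the tangential component of $\vecfield(\sol)$ vanishes. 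By the standard principle of linearized stability for discrete dynamical systems (equivalently, the local stable-manifold argument alluded to in the introduction), it suffices to show that the Jacobian $M(\step) \defeq D\mathcal{T}_\step(w^{\star})$ has spectral radius strictly below one: then $\mathcal{T}_\step$ is a contraction in a suitable norm on a neighborhood of $w^{\star}$, the iterates stay trapped in that neighborhood once initialized there, and $\norm{w_\run - w^{\star}}$ decays like $\bigoh(\exp(-\rho\run))$ with $\rho \approx -\log\rho(M(\step))$.

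Before the spectral analysis I would dispose of the projection. Near a regular solution the only motions the linearized dynamics can sustain are along $\tspace$, the tangent space to $\points$ at $\sol$; formally I would replace $\proj_{\points}$ by its local conical derivative, the projection onto the tangent cone $\tcone_{\points}(\sol)$, and argue that since the iterates remain close to $\sol$ only the tangent component is dynamically relevant. This is exactly why \cref{def:regular} requires $\tvec^{\top}\Jacf{\vecfield}{\sol}\tvec > 0$ only for $\tvec$ tangent to $\points$: it is positive-definiteness of the \emph{symmetric part} of the Jacobian restricted to $\tspace$, which \emph{implies} that every eigenvalue of the restricted Jacobian has strictly positive real part. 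When $\sol$ is interior the projection is locally inactive and, by \cref{prop:variants}, the three variants even coincide; the boundary case is then handled by carrying out the entire argument on $\tspace$.

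The hard part is the spectral radius bound $\rho(M(\step)) < 1$. Writing $J$ for the Jacobian of $\vecfield$ at $\sol$ restricted to $\tspace$, a direct computation yields for each variant a block matrix $M(\step)$ whose dominant block (per eigenvalue $\lambda$ of $J$) interpolates $I - \step J$ with a memory term of order $\bigoh(\step)$ or $\bigoh(\step^{2})$. The eigenvalues then split into two families: a ``physical'' family behaving as $1 - \step\lambda + \bigoh(\step^{2})$, which satisfies $\abs{1 - \step\lambda}^{2} = 1 - 2\step\Re\lambda + \step^{2}\abs{\lambda}^{2} < 1$ precisely when $\step < 2\Re\lambda/\abs{\lambda}^{2}$ (using $\Re\lambda > 0$); and a ``spurious'' family introduced by the gradient extrapolation, which collapses toward the origin as $\step\to 0$ and must be shown to remain strictly inside the unit disk. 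Establishing this uniformly over the spectrum of $J$, and simultaneously for all three variants (which differ in where the proxy and the projection enter, hence in the precise form of $M(\step)$), is the crux; I expect to treat it by a Rouché/perturbation argument on the characteristic polynomial of each block, exhibiting an explicit step-size threshold depending on $\lips$ and the conditioning of $J$ below which both families lie in the open disk. Once $\rho(M(\step)) < 1$ is secured for small $\step$, the linearized-stability conclusion of the first paragraph delivers the claimed local rate.
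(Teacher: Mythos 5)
Your proposal takes a different route from the paper (linearized stability of the lifted iteration versus the paper's localization argument), but as written it has two genuine gaps. The first is the constrained case. The theorem allows $\sol$ to lie on the boundary of $\points$, where in general $\vecfield(\sol)\neq 0$ (only $-\vecfield(\sol)$ lies in the normal cone to $\points$ at $\sol$); the fixed point of your lifted \eqref{eq:PEG}/\eqref{eq:OG} map is then $(\sol,\vecfield(\sol))$, not $(\sol,0)$, and the "tangential component of $\vecfield(\sol)$ vanishes" claim fails whenever the tangent cone is not a subspace (e.g.\ at a vertex). More seriously, $\mathcal{T}_\step$ contains $\proj_{\points}$, which is not $C^{1}$ near such a point, so the principle of linearized stability simply does not apply; your proposed fix \textendash\ replacing $\proj_{\points}$ by the projection onto the tangent cone and "carrying out the argument on $\tspace$" \textendash\ is asserted rather than proven. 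The cone projection is itself nonsmooth and nonlinear, condition \eqref{eq:Jac} says nothing about normal directions, and reducing the dynamics to $\tspace$ would require a genuine piece of nonsmooth stability theory. The second gap is that even in the unconstrained case, the crux of your argument \textendash\ the uniform spectral-radius bound $\rho(M(\step))<1$ for all three variants \textendash\ is explicitly deferred ("I expect to treat it by a Rouch\'e/perturbation argument"), so the proof is incomplete exactly where the work lies. Your structural picture is correct, though: for unconstrained \eqref{eq:PEG} each eigenvalue $\lambda$ of $J$ yields the quadratic $\mu^{2}-(1-2\step\lambda)\mu-\step\lambda=0$, with one root $\approx 1-\step\lambda$ and one $\approx-\step\lambda$, and positivity of the symmetric part does imply $\Re\lambda>0$.

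For contrast, the paper sidesteps spectral analysis entirely. Its \cref{lem:regualr} converts regularity into a solution-centered strong-monotonicity inequality $\braket{\vecfield(\point)}{\point-\sol}\ge\strong\norm{\point-\sol}^{2}$ on $\nhd=\points\cap\ball_{\nhdradius}(\sol)$; an induction based on the quasi-descent inequality \eqref{eq:descent} (already established, projections included, in the proof of \cref{thm:det-global-erg}) together with the step bound $\step\le\nhdradius/(2\vbound)$ shows the iterates never leave this neighborhood; and then the geometric rate of \cref{thm:det-global-last} applies verbatim, because its proof only uses that inequality along the trajectory. This handles projections and boundary solutions for free \textendash\ precisely the points where your linearization route gets stuck \textendash\ and treats all three variants uniformly without computing any characteristic polynomials. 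If you want to salvage your approach, you should either restrict to $\points=\vecspace$ (where it is a legitimate, known technique) or supply the missing nonsmooth-analysis machinery for the projected case.
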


The proof of this theorem relies on showing that
\begin{inparaenum}
[(\itshape i\hspace*{.5pt}\upshape)]
\item
$\vecfield$ essentially behaves like a smooth, strongly monotone operator close to $\sol$;
and
\item
if the method is initialized in a small enough neighborhood of $\sol$, it will remain in said neighborhood for all $\run$.
\end{inparaenum}
As a result, \cref{thm:det-local} essentially follows by ``localizing'' \cref{thm:det-global-last} to this neighborhood.

As a preamble to our stochastic analysis in the next section, we should state here that, albeit straightforward, the proof strategy outlined above breaks down if we have access to $\vecfield$ only via a \emph{stochastic} oracle.
In this case, a single ``bad'' realization of the feedback noise $\current[\noise]$ could drive the process away from the attraction region of any local solution of \eqref{eq:SVI}.
For this reason, the stochastic analysis requires significantly different tools and techniques and is considerably more intricate.

\section{Stochastic analysis}
\label{sec:stoch}

We now present our analysis for stochastic \aclp{VI} with oracle feedback of the form \eqref{eq:oracle}.
For concreteness, given that the \ac{PEG} variant of the \ac{SEG} method employs the most straightforward proxy mechanism, we will focus on this variant throughout;
for the other variants, the proofs and corresponding explicit expressions follow from the same rationale (as in the case of \cref{thm:det-global-erg}).

\subsection{Global convergence}
\label{sec:stoch-global}

As we mentioned in the introduction, under \cref{asm:solution,asm:Lipschitz,asm:mono}, \citet{CS16} and \citet{GBVV+19} showed that \ac{SEG} methods attain a $\bigoh(1/\sqrt{\nRunsNew})$ ergodic convergence rate.
By strengthening \cref{asm:mono} to \cref{asm:mono-strong}, we show that this result can be augmented in two synergistic ways:
under \cref{asm:solution,asm:Lipschitz,asm:mono-strong}, both the last iterate and the ergodic average of \ac{SEG} achieve a $\bigoh(1/\nRunsNew)$ convergence rate.

\medskip
\begin{theorem}
\label{thm:stoch-global}
Suppose that $\vecfield$ satisfies \cref{asm:solution,asm:Lipschitz,asm:mono-strong},
and
assume that \eqref{eq:PEG} is run with
stochastic oracle feedback of the form \eqref{eq:oracle}
and
a step-size of the form $\current[\step] = \step / (\run + \strongstepm)$ for some $\step > 1/\strong$ and $\strongstepm \geq 4\lips\step$.
Then, the generated sequence of the algorithm's base states satisfies
\begin{equation}
\label{eq:rate-stoch-global}
\exof{\norm{\state_{\nRunsNew} - \sol}^{2}}
	\leq
	\frac{6\step^2\noisevar}{\strong\step-1}\frac{1}{\nRunsNew}
	+ \smalloh\left(\frac{1}{\nRunsNew}\right),
\end{equation}
while its ergodic average $\avg_{\nRunsNew} = \nRunsNew^{-1} \sum_{\runalt=\start}^\nRunsNew \current[\state][\runalt]$  enjoys the bound
\PM{Why $\bigoh(1/t)$? I think it would be better to write $\smalloh(\log t/t)$.
Also, I would suggest to write the $\bigoh(\dots)$ with $1/\cdots$, otherwise they stand out too much (because of the huge parentheses).}
\YGH{I prefer to stay with $\frac{1}{\run}$. Let's see what the others think.}
\begin{equation}
\label{eq:rate-stoch-global-erg}
    \exof{\norm{\avg_{\nRunsNew} - \sol}^{2}}
    \leq
	\frac{6\step^2\noisevar}{\strong\step-1}\frac{\log\nRunsNew}{\nRunsNew} + \smalloh\left(\frac{\log \nRunsNew}{\nRunsNew}\right). 
\end{equation}
\end{theorem}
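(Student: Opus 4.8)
The plan is to reduce \cref{thm:stoch-global} to a one-dimensional recursion for the energy $D_\run \defeq \norm{\current - \sol}^{2}$ and then to solve that recursion under the prescribed step-size schedule. First I would unroll the two projection steps of \eqref{eq:PEG}: applying the obtuse-angle property of $\proj_{\points}$ to $\update = \proj_{\points}(\current - \current[\step]\inter[\vecfield])$ against the base point $\sol$, applying it again to $\inter = \proj_{\points}(\current - \current[\step]\past[\vecfield])$ against $\update$, and combining the two with the polarization identity $2\product{\inter - \current}{\update - \inter} = \norm{\update - \current}^{2} - \norm{\inter - \current}^{2} - \norm{\update - \inter}^{2}$, one obtains the ``optimistic'' descent inequality
\begin{equation*}
\norm{\update - \sol}^{2}
	\leq \norm{\current - \sol}^{2}
	- 2\current[\step]\product{\inter[\vecfield]}{\inter - \sol}
	- 2\current[\step]\product{\inter[\vecfield] - \past[\vecfield]}{\update - \inter}
	- \norm{\inter - \current}^{2}
	- \norm{\update - \inter}^{2}.
\end{equation*}

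Next I would inject the two structural hypotheses. Splitting $\inter[\vecfield] = \vecfield(\inter) + \inter[\noise]$, strong monotonicity together with the variational characterization $\product{\vecfield(\sol)}{\inter - \sol} \geq 0$ of $\sol$ gives $\product{\vecfield(\inter)}{\inter - \sol} \geq \strong\norm{\inter - \sol}^{2}$, while the leftover $-2\current[\step]\product{\inter[\noise]}{\inter - \sol}$ is a martingale increment: since $\inter$ is measurable with respect to the history $\inter[\filter]$ at which the oracle is queried, this term vanishes in conditional expectation. To turn the strong-monotonicity gain into a contraction on $D_\run$ I would use $\norm{\inter - \sol}^{2} \geq \tfrac12\norm{\current - \sol}^{2} - \norm{\inter - \current}^{2}$, which yields a contraction factor $\strong\current[\step]$ (the loss of a factor of two here is precisely what produces the denominator $\strong\step - 1$ in the statement). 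The anticipation term is bounded by Young's inequality and Lipschitz continuity, $\norm{\vecfield(\inter) - \vecfield(\past)} \leq \lips\norm{\inter - \past} \leq \lips(\norm{\inter - \current} + \norm{\current - \past})$, producing positive multiples of $\norm{\inter - \current}^{2}$, $\norm{\update - \inter}^{2}$ and $\norm{\current - \past}^{2}$. The first two are absorbed by the negative quadratic terms above and the third telescopes against the previous iteration's $-\norm{\update - \inter}^{2}$, exactly as in the $\tele$-bookkeeping of \cref{lem:descent}; the hypothesis $\strongstepm \geq 4\lips\step$ forces $\current[\step] \leq \step/\strongstepm \leq 1/(4\lips)$ for every $\run \geq 1$, which is the smallness of the step-size that makes all of these absorptions valid.

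Collecting terms and taking total expectations then gives a recursion of the form
\begin{equation*}
\exof{D_{\run+1}}
	\leq \Bigl(1 - \frac{\strong\step}{\run + \strongstepm}\Bigr)\exof{D_{\run}}
	+ \frac{C\step^{2}\noisevar}{(\run + \strongstepm)^{2}}
	+ \smalloh\!\Bigl(\frac{1}{\run^{2}}\Bigr),
\end{equation*}
where the $\current[\step]^{2}\noisevar$ contributions arise from the squared-noise remainders $\norm{\current[\step]\inter[\noise]}^{2}$ left by the update and leading steps. The one genuinely delicate point is that the \emph{same} oracle sample $\inter[\noise]$ feeds both $\update$ and the next leading state, so the feedback noise is shared across two consecutive iterates and breaks the naive martingale factorization. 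I would resolve this either by carrying an auxiliary term proportional to $\current[\step]^{2}\norm{\past[\noise]}^{2}$ inside the Lyapunov function, or by bookkeeping each quantity against the filtration ($\inter[\filter]$ versus $\current[\filter]$) at which it first becomes measurable; this, rather than the algebra, is where I expect the main difficulty to lie.

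Finally I would solve the scalar recursion. For sequences obeying $u_{\run+1} \leq (1 - p/(\run+\strongstepm))u_\run + q/(\run+\strongstepm)^{2}$ with $p = \strong\step > 1$, a standard induction gives $u_\run \leq \tfrac{q}{p-1}\,\run^{-1} + \smalloh(\run^{-1})$; tracking the constant $C$ reproduces \eqref{eq:rate-stoch-global} with leading coefficient $6\step^{2}\noisevar/(\strong\step - 1)$. For the ergodic average, Jensen's inequality gives $\exof{\norm{\avg_{\nRunsNew} - \sol}^{2}} \leq \nRunsNew^{-1}\sum_{\runalt = \start}^{\nRunsNew}\exof{D_{\runalt}}$, and inserting the per-iterate bound $\exof{D_{\runalt}} \leq 6\step^{2}\noisevar/((\strong\step-1)\runalt) + \smalloh(1/\runalt)$ converts the harmonic sum $\sum_{\runalt \leq \nRunsNew} 1/\runalt = \Theta(\log\nRunsNew)$ into the advertised $\log\nRunsNew/\nRunsNew$ rate of \eqref{eq:rate-stoch-global-erg}.
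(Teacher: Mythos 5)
Your proposal is correct and follows essentially the same route as the paper's proof: the same projection-based descent inequality, the same strong-monotonicity step with the factor-of-two loss that produces the $\strong\step-1$ denominator, the same Lipschitz/Young absorption under $\strongstepm\ge4\lips\step$, a Chung-type lemma to solve the scalar recursion, and Jensen plus a harmonic sum for the ergodic bound. The shared-noise difficulty you flag is resolved in the paper exactly by your first suggestion: the recursion is written for the augmented Lyapunov quantity $\ex[\norm{\current-\sol}^2] + \last[\step]^2\ex[\norm{\past[\vecfield]-\pastpast[\vecfield]}^2]$, whose auxiliary term (the squared difference of successive \emph{stochastic} oracle values) telescopes against $\current[\step]^2\ex[\norm{\inter[\vecfield]-\past[\vecfield]}^2]$ and carries the noise shared across consecutive iterates, yielding the constant $6\step^2\noisevar$ from the accumulated variance terms.
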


Regarding our proof strategy for the last iterate of the process, we can no longer rely either on a contraction argument or the averaging mechanism that yields the $\bigoh(1/\sqrt{\nRunsNew})$ ergodic convergence rate.
Instead, we show in the appendix that $\current$ is (stochastically) quasi-Fejér in the sense of \cite{Com01,CP15};
then, leveraging the method's specific step-size, we employ successive numerical sequence estimates to control the summability error and obtain the $\bigoh(1/\nRunsNew)$ rate.

\subsection{Local convergence}
\label{sec:stoch-local}

We proceed to examine the convergence of the method in the stochastic, \emph{non-monotone} case.
Our main result in this regard is the following.


\begin{figure*}[t]
\captionsetup[subfigure]{singlelinecheck=off}
\centering
\footnotesize
\begin{subfigure}[b]{.325\textwidth}
\includegraphics[width=\linewidth]{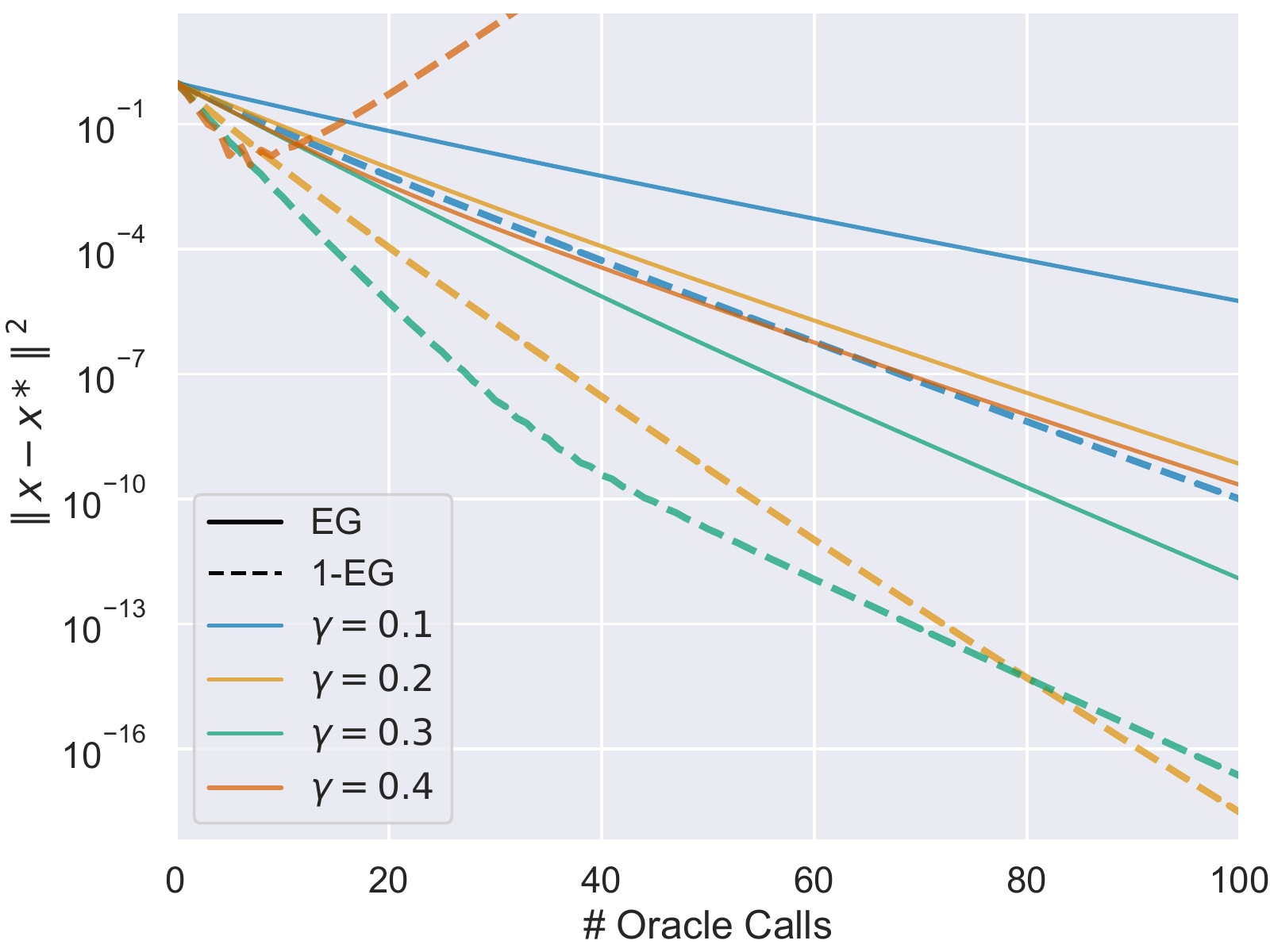}
\caption{%
\scriptsize
Strongly monotone {[$\indic_1\!=1$,  $\indic_2\!=0$]}, deterministic,
last iterate
\label{fig:a}}
\end{subfigure}
\begin{subfigure}[b]{.325\textwidth}
\includegraphics[width=\linewidth]{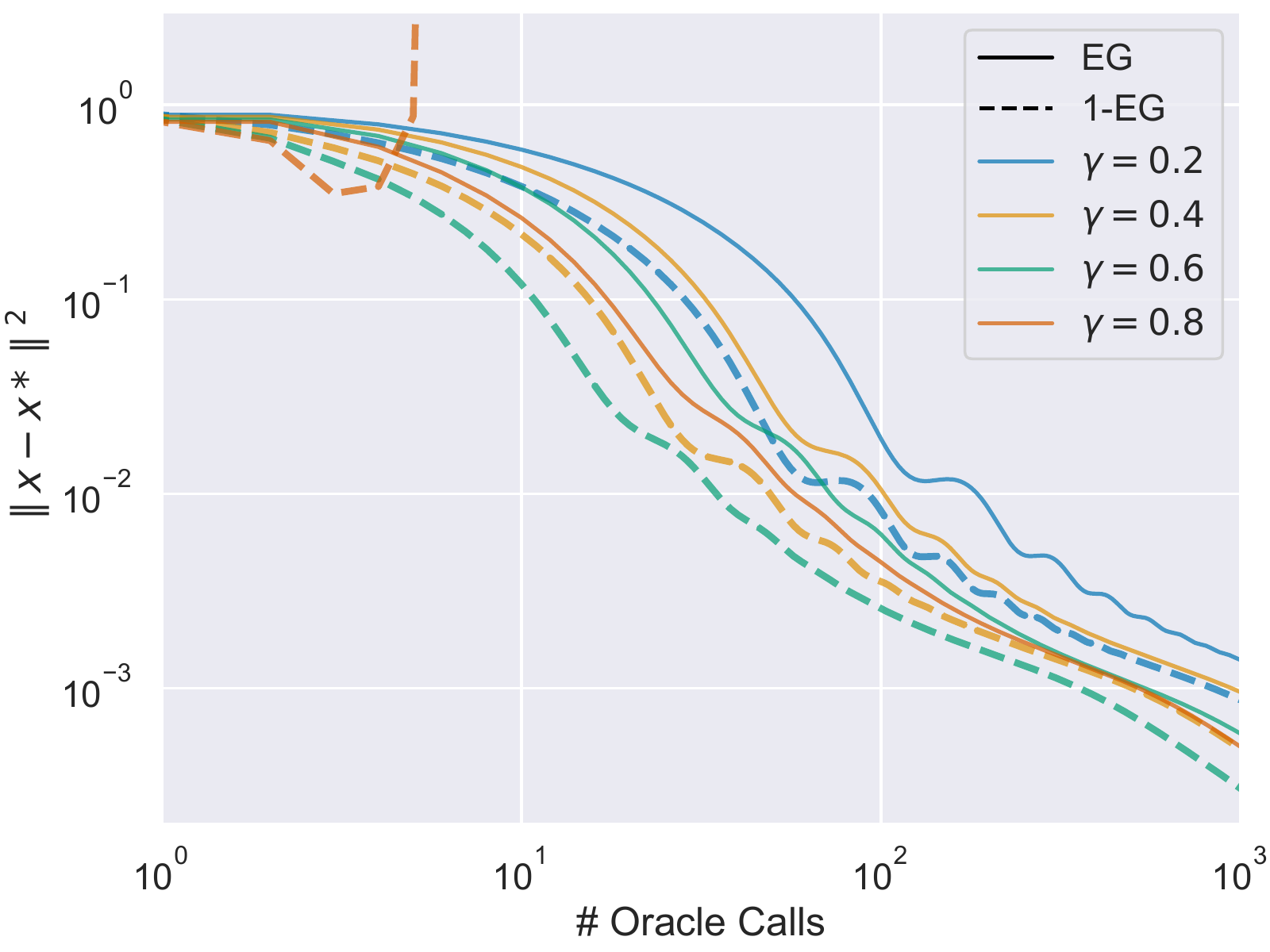}
\caption{%
\scriptsize
Monotone {[$\indic_1=0$,  $\indic_2=1$]}, deterministic,
ergodic averaging
\label{fig:b}}
\end{subfigure}
\begin{subfigure}[b]{.325\textwidth}
\includegraphics[width=\linewidth]{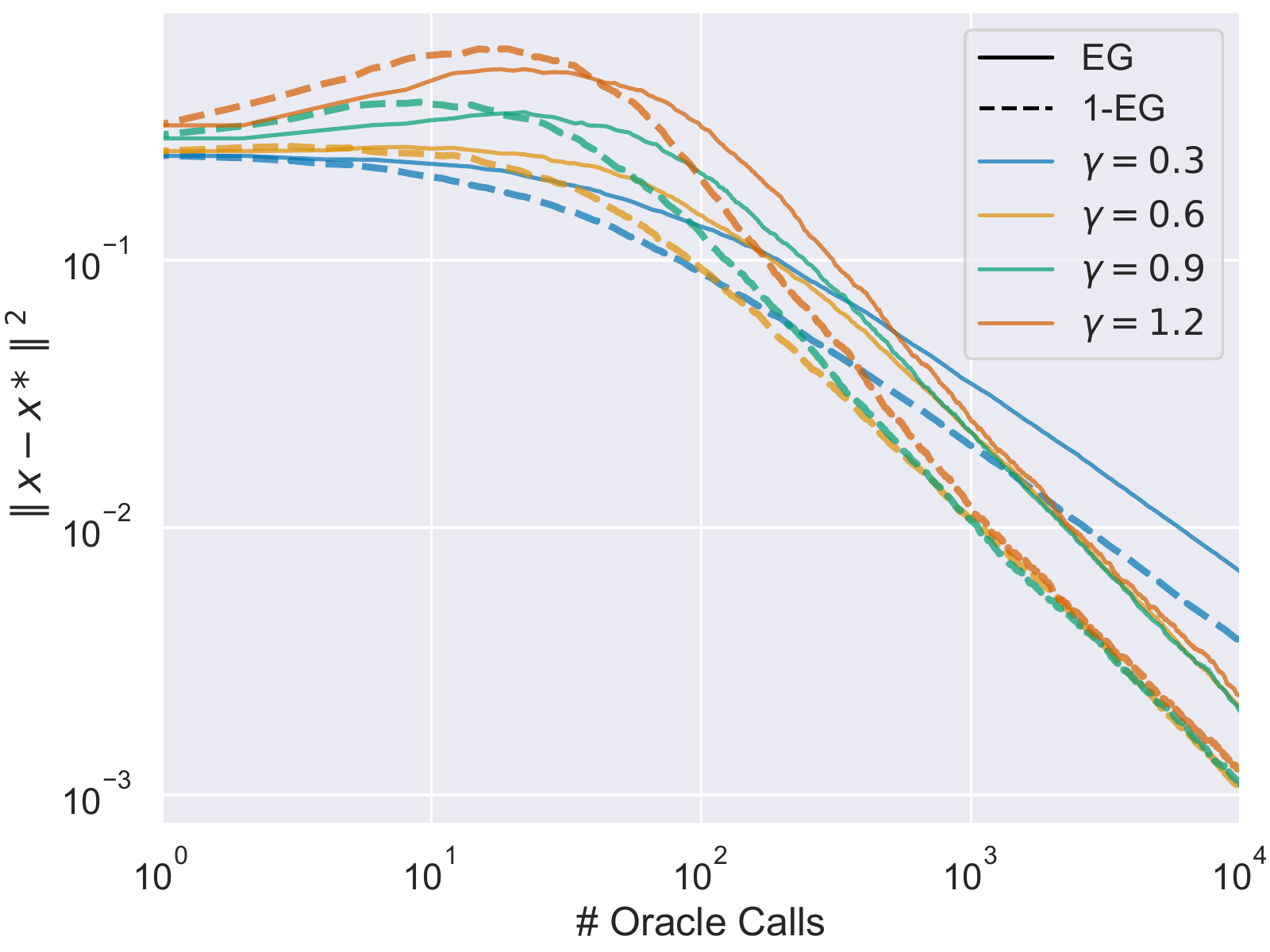}
\caption{%
\scriptsize
Non-monotone {[$\indic_1=1$, $\indic_2=-1$]}, iid {$\noise \sim \mathcal{N}(0,.01)$}, last iterate {($\strongstepm=15$)}
\label{fig:c}}
\end{subfigure}%
\caption[]{Illustration of the performance of \ac{EG} and \ac{SEG} in the (a priori non-monotone) \acl{SP} problem
$$
\hspace*{-2.5cm}
\sadobj(\minvar,\maxvar)
	= 2\indic_1\minvar^\top\!\matmin_1\minvar
	+ \indic_2\big(\minvar^\top\!\matmin_2\minvar\big)^2
	- 2\indic_1\maxvar^\top\!\matmax_1\maxvar
	- \indic_2 \big(\maxvar^\top\!\matmax_2\maxvar\big)^2
	+ 4 \minvar^\top\!\matlin\maxvar
$$
on the full unconstrained space $\points=\vecspace=\R^{\vdim_1\times\vdim_2}$ with $\vdim_1=\vdim_2=1000$ and $ \matmin_1,\matmax_1, \matmin_2,\matmax_2 \succ 0$.
We choose three situations representative of the settings considered in the paper:
\begin{inparaenum}
[(\itshape a\upshape)]
\item
linear convergence of the last iterate of  the deterministic methods in strongly monotone problems;
\item
the $\bigoh(1/\nRunsNew)$ convergence of the ergodic average in monotone, deterministic problems;
and
\item
the $\bigoh(1/\nRunsNew)$ local convergence rate of the method's last iterate in stochastic, \emph{non-monotone} problems.
\end{inparaenum}
For (\emph{a}) and (\emph{b}), the origin is the unique solution of \eqref{eq:SVI}, and for (\emph{c}) it is a regular solution thereof.
We observe that \ac{SEG} consistently outperforms \ac{EG} in terms of oracle calls for a fixed step-size, and the observed rates are consistent with the rates reported in \cref{tab:overview}.}
\label{fig:ills}
\end{figure*}

\begin{theorem}
\label{thm:stoch-local}
Let $\sol$ be a regular solution of \eqref{eq:SVI} and fix a tolerance level $\smallproba > 0$.
Suppose further that \eqref{eq:PEG} is run with
stochastic oracle feedback of the form \eqref{eq:oracle}
and
a variable step-size of the form $\current[\step] = \step / (\run + \strongstepm)$ for some $\step>1/\strong$ and large enough $\strongstepm$.
Then:%
\PM{Multiplied $\sadobj$ in the figure by $4$ to get rid of the fractions and save white space.}
\begin{enumerate}[label=\upshape(\itshape\alph*\upshape),leftmargin=2em]
\item
There are neighborhoods $\nhd$ and $\nhdalt$ of $\sol$ in $\points$ such that, if $\state_{\paststart}\in\nhd, \state_{\start}\in \nhdalt$, the event
\begin{equation}
\label{eq:event-horizon}
\event_{\infty}
	= \{\inter\in\nhd\,\text{for all $\run=\running$}\}
\end{equation}
occurs with probability at least $1-\smallproba$.
\item
Conditioning on the above, we have:
\begin{equation}
\label{eq:rate-stoch-local}
\exof{\norm{\state_{\nRunsNew} - \sol}^{2} \given \event_{\infty}}
	\leq \frac{4 \step^{2} (\vbound^{2} + \noisevar)}{(\strong\step - 1)(1 - \smallproba)} \frac{1}{\nRunsNew}
	+ \smalloh\left(\frac{1}{\nRunsNew}\right),
\end{equation}
where
$\vbound = \sup_{\point\in\nhd} \norm{\vecfield(\point)} < \infty$
and
$\strong = \inf_{\point\in\nhd} \braket{\vecfield(\point)}{\point - \sol} / \norm{\point - \sol}^{2} > 0$.
\end{enumerate}
\end{theorem}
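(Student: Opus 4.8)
The plan is to use the regularity of $\sol$ to recast the method, on a small neighborhood of $\sol$, as a stochastic approximation scheme with a strongly-monotone-type drift, and then to handle the confinement of part~(a) and the rate of part~(b) separately. First I would fix the geometry. Since $\vecfield$ is $C^{1}$ near $\sol$ and $\Jacf{\vecfield}{\sol}$ is positive-definite along rays tangent to $\points$, a first-order expansion together with continuity of the Jacobian lets me pick a closed ball $\nhd = \ballr{\sol}{r}$ small enough that $\braket{\vecfield(\point)}{\point - \sol} \geq \strong\norm{\point - \sol}^{2}$ for all $\point\in\nhd$ with $\strong > 0$, while simultaneously $\vbound = \sup_{\point\in\nhd}\norm{\vecfield(\point)} < \infty$ and $\vecfield$ is Lipschitz on $\nhd$. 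These are exactly the local constants in the statement, and they play the role that global strong monotonicity and boundedness play in \cref{thm:stoch-global}.

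Next I would establish a one-step energy inequality for $\norm{\current - \sol}^{2}$ valid whenever the relevant iterates lie in $\nhd$. Expanding $\update = \proj_{\points}(\current - \current[\step]\inter[\vecfield])$ through the firm nonexpansiveness of $\proj_{\points}$ and writing $\inter[\vecfield] = \vecfield(\inter) + \inter[\noise]$ gives
\[
\norm{\update - \sol}^{2} \leq \norm{\current - \sol}^{2} - 2\current[\step]\braket{\vecfield(\inter)}{\inter - \sol} - 2\current[\step]\braket{\inter[\noise]}{\inter - \sol} + \current[\step]^{2}\norm{\inter[\vecfield]}^{2} + (\text{anticipation error}),
\]
where the anticipation error is controlled through $\inter = \proj_{\points}(\current - \current[\step]\past[\vecfield])$ and the local Lipschitz bound. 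Taking conditional expectation with respect to $\inter[\filter]$ kills the noise cross-term (since $\exof{\inter[\noise] \given \inter[\filter]} = 0$), bounds $\exof{\norm{\inter[\vecfield]}^{2} \given \inter[\filter]} \leq \vbound^{2} + \noisevar$, and---after invoking local strong monotonicity and absorbing the $\bigoh(\current[\step])$ discrepancy between $\norm{\inter - \sol}$ and $\norm{\current - \sol}$---produces a quasi-contractive recursion of the form
\[
\exof{\norm{\update - \sol}^{2} \given \inter[\filter]} \leq (1 - c\,\current[\step] + \bigoh(\current[\step]^{2}))\norm{\current - \sol}^{2} + \current[\step]^{2}(\vbound^{2} + \noisevar)(1 + \smalloh(1)),
\]
with $c$ proportional to $\strong$ and $c\step > 1$ by the hypothesis $\step > 1/\strong$.

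The main obstacle is part~(a): the high-probability confinement, precisely because a single bad noise draw can in principle eject the iterates from the basin. I would introduce the exit time $\tau = \inf\{\run : \inter\notin\nhd\}$ and study the stopped energy $\norm{\state_{\run\wedge\tau} - \sol}^{2}$. On $\{\run < \tau\}$ the recursion above shows that, once $\strongstepm$ is large enough that the $\bigoh(\current[\step]^{2})$ term is dominated by the contraction, the stopped energy augmented by the summable tail $\sum_{\runalt \geq \run}\current[\step][\runalt]^{2}(\vbound^{2} + \noisevar)$ is a nonnegative supermartingale. A maximal inequality (Ville's/Doob's) then bounds $\probof{\tau < \infty}$ by a constant multiple of the initial energy plus the residual noise budget; choosing $\state_{\start}$ in a sufficiently small $\nhdalt\subseteq\nhd$ and $\state_{\paststart}\in\nhd$ makes this quantity $\leq \smallproba$, \ie $\probof{\event_{\infty}} \geq 1 - \smallproba$. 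This is the step I expect to be most delicate, since it is where the noise level, the step-size tail, and the radius of $\nhd$ must be balanced.

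Finally, for part~(b), on $\event_{\infty}$ every leading state stays in $\nhd$, so the recursion of the second paragraph holds at all times. Multiplying by $\one_{\event_{\infty}}$ and taking total expectations---using that $\event_{\infty}$ forces confinement up to each time, so $\one_{\event_{\infty}}$ passes through the conditional bound---the sequence $u_{\run} = \exof{\norm{\current - \sol}^{2}\one_{\event_{\infty}}}$ satisfies $u_{\run+1} \leq (1 - c\,\current[\step] + \bigoh(\current[\step]^{2}))u_{\run} + \current[\step]^{2}(\vbound^{2} + \noisevar)$. With $\current[\step] = \step/(\run + \strongstepm)$ and $c\step > 1$, the same numerical-sequence estimate used for \cref{thm:stoch-global} yields $u_{\run} \leq 4\step^{2}(\vbound^{2} + \noisevar)/[(\strong\step - 1)\run] + \smalloh(1/\run)$; dividing by $\probof{\event_{\infty}} \geq 1 - \smallproba$ to convert $\exof{\,\cdot\,\one_{\event_{\infty}}}$ into $\exof{\,\cdot \given \event_{\infty}}$ supplies the remaining factor $1/(1 - \smallproba)$ and the claimed bound.
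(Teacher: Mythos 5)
Your overall strategy (localize the problem via the regularity constants $\strong$, $\vbound$, a Lipschitz bound on $\nhd$; confine the iterates with high probability; then run a Chung-type recursion and divide by $\prob(\event_\infty)$) is the same as the paper's, but both of your key steps have genuine gaps, and they are exactly the two delicate points the paper's proof is built around. In part (a), your exit time $\tau=\inf\{\run:\inter\notin\nhd\}$ is defined through the \emph{leading} states, while your supermartingale tracks the \emph{base}-state energy $\norm{\state_{\run\wedge\tau}-\sol}^2$ plus a deterministic tail. A maximal inequality bounds $\prob\bigl(\sup_\run M_\run\geq\lambda\bigr)$, so to conclude $\prob(\tau<\infty)\leq\smallproba$ you need the inclusion $\{\tau<\infty\}\subseteq\{\sup_\run M_\run\geq\lambda\}$ \textendash\ and this fails: since $\inter=\proj_{\points}(\current-\current[\step]\past[\vecfield])$ and the oracle noise has bounded variance but is \emph{not} bounded, a single large realization of $\past[\noise]$ can eject $\inter$ from $\nhd$ while every base state remains arbitrarily close to $\sol$. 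This is precisely why the paper builds its confinement event $\eventalt_\nRuns$ out of \emph{realized} cumulative noise statistics $\sumnoise_\nRuns$, $\sumnoisevar_\nRuns$ (note that $\sumnoisevar_\nRuns$ contains the terms $\current[\step]^2\bigl(\norm{\inter[\vecfield]}^2+\norm{\past[\vecfield]}^2\bigr)$, which are exactly what controls the leading-state jumps), proves by induction in \cref{lem:event-inclusion} that small cumulative noise forces every $\inter$ to stay in $\nhd$, and bounds the failure probability by a Markov-type argument with adapted indicators in \cref{lem:local-exp-prob-rec}.

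In part (b) the gap is the conditional bias. You multiply the one-step recursion by $\one_{\event_\infty}$ and let it ``pass through the conditional bound'', but $\event_\infty$ depends on the entire future trajectory, so it is not $\current[\filter]$-measurable; consequently $\ex[\braket{\inter[\noise]}{\inter-\sol}\one_{\event_\infty}]$ need not vanish \textendash\ conditioning on never leaving $\nhd$ discards exactly those noise realizations that would have ejected the iterate, so the surviving noise has a nonzero conditional mean, and your claimed recursion for $u_\run=\ex[\norm{\current-\sol}^2\one_{\event_\infty}]$ is unjustified. The paper avoids this by running the recursion with the \emph{adapted} indicators $\one_{\current[\event]}$, where $\current[\event]\in\current[\filter]$ is the confinement event only up to time $\run$, chaining via the monotonicity $\current[\event]\subseteq\last[\event]$, applying \cref{lem:chung1954} to $\ex[\norm{\current-\sol}^2\one_{\last[\event]}]$, and only at the very end using $\one_{\event_\infty}\leq\one_{\last[\event]}$ together with $\prob(\event_\infty)\geq1-\smallproba$ to obtain \eqref{eq:rate-stoch-local}. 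Your final division by $\prob(\event_\infty)$ is fine; it is the recursion itself that breaks, and repairing both steps leads essentially to the paper's construction.
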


The finiteness of $\vbound$ and the positivity of $\strong$ are both consequences of the regularity of $\sol$ and their values only depend on the size of the neighborhood $\nhd$.
Taking a larger $\nhd$ would increase the algorithm's certified initialization basin but it would also negatively impact its convergence rate (since $\vbound$ would increase while $\strong$ would decrease).
Likewise, the neighborhood $\nhdalt$ only depends on the size of $\nhd$ and, as we explain in the appendix, it suffices to take $\nhdalt$ to be ``one fourth'' of $\nhd$.

From the above, it becomes clear that the situation is significantly more involved than the corresponding deterministic analysis.
This is also reflected in the proof of \cref{thm:stoch-local} which requires completely new techniques, well beyond the straightforward localization scheme underlying \cref{thm:det-local}.
More precisely, a key step in the proof (which we detail in the appendix) is to show that
the iterates of the method remain close to $\sol$ for all $\run$ with arbitrarily high probability.
In turn, this requires showing that the probability of getting a string of ``bad'' noise realizations of arbitrary length is controllably small.
Even then however, the global analysis \emph{still} cannot be localized because conditioning changes the probability law under which the oracle noise is unbiased.
Accounting for this conditional bias requires a surprisingly delicate probabilistic argument which we also detail in the supplement.


\section{Concluding remarks}
\label{sec:conclusions}

Our aim in this paper was to provide a synthetic view of single-call surrogates to the \acl{EG} algorithm, and to establish optimal convergence rates in a range of different settings \textendash\ deterministic, stochastic, and/or non-monotone.
Several interesting avenues open up as a result, from extending the theory to more general Bregman proximal settings, to developing an adaptive version as in the recent work \cite{BL19} for two-call methods.
We defer these research directions to future work.

\section*{Acknowledgments}
%
%
This work benefited from financial support by MIAI Grenoble Alpes (Multidisciplinary Institute in Artificial Intelligence).
P.~Mertikopoulos was partially supported by
the French National Research Agency (ANR) grant ORACLESS (ANR\textendash 16\textendash CE33\textendash 0004\textendash 01)
and
the EU COST Action CA16228 ``European Network for Game Theory'' (GAMENET)\afterhead

\bibliographystyle{ormsv080}
\bibliography{bibtex/IEEEabrv,bibtex/Bibliography-PM}

\newpage
\appendix
\numberwithin{equation}{section}		
\numberwithin{lemma}{section}		
\numberwithin{proposition}{section}		
\numberwithin{theorem}{section}		

\renewcommand{\paststart}{\debug{\frac{1}{2}}}
\renewcommand{\interstart}{\debug{\frac{3}{2}}}

\RenewDocumentCommand{\inter}{O{\state}O{\run}}{\debug{#1_{#2+\frac{1}{2}}}}
\RenewDocumentCommand{\past}{O{\state}O{\run}}{\debug{#1_{#2-\frac{1}{2}}}}
\RenewDocumentCommand{\pastpast}{O{\state}O{\run}}{\debug{#1_{#2-\frac{3}{2}}}}
\RenewDocumentCommand{\future}{O{\state}O{\run}}{\debug{#1_{#2+\frac{3}{2}}}}


\section{Technical lemmas}
\label{app:tech-lemmas}

\begin{lemma}
\label{lem:3points}
Let 
$\point, \dpoint \in \vecspace$ and $\cvx\subseteq\vecspace$ be a closed convex set.
We set $\pointnew \defeq \proj_{\cvx}(\point-\dpoint)$.
For all $\arpoint\in\cvx$, we have
\begin{equation}
  \norm{\pointnew - \arpoint}^2
  \le
  \norm{\point - \arpoint}^2
  - 2 \product{\dpoint}{\pointnew-\arpoint}
  - \norm{\pointnew - \point}^2.
\end{equation}
\end{lemma}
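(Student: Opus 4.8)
The plan is to invoke the variational characterization of the Euclidean projection and to combine it with an elementary expansion of the squared norm; no Lipschitz or monotonicity hypothesis is needed, so this is a purely geometric fact about closed convex sets. The first step is to recall the obtuse-angle (first-order optimality) condition for projection: since $\pointnew = \proj_{\cvx}(\point - \dpoint)$ minimizes $\norm{(\point - \dpoint) - \cdot}^2$ over the closed convex set $\cvx$, it satisfies
$$\product{(\point - \dpoint) - \pointnew}{\arpoint - \pointnew} \le 0 \quad \text{for every } \arpoint \in \cvx.$$
Expanding and rearranging this yields $\product{\point - \pointnew}{\pointnew - \arpoint} \ge \product{\dpoint}{\pointnew - \arpoint}$, which is the only place where convexity of $\cvx$ enters the argument.

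The second step is algebraic. Expanding $\norm{\point - \arpoint}^2 = \norm{(\point - \pointnew) + (\pointnew - \arpoint)}^2$ gives the three-point identity
$$\norm{\pointnew - \arpoint}^2 = \norm{\point - \arpoint}^2 - \norm{\pointnew - \point}^2 - 2 \product{\point - \pointnew}{\pointnew - \arpoint}.$$
I would then insert the inequality from the first step into the final term: since $\product{\point - \pointnew}{\pointnew - \arpoint} \ge \product{\dpoint}{\pointnew - \arpoint}$, we have $-2\product{\point - \pointnew}{\pointnew - \arpoint} \le -2\product{\dpoint}{\pointnew - \arpoint}$, and the claimed bound follows immediately, with the $-\norm{\pointnew - \point}^2$ term carried through intact.

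There is essentially no hard step here: the result is the standard ``prox inequality'' for Euclidean projections. The only point demanding care is tracking the signs correctly when passing from the optimality condition to the final bound \textemdash{} in particular, ensuring that the $-\norm{\pointnew - \point}^2$ term survives as a genuine improvement rather than being absorbed or dropped along the way. I would sanity-check the argument by verifying the expansion in the second step directly, rather than relying on a half-remembered form of the three-point lemma.
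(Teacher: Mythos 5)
Your proof is correct and follows essentially the same route as the paper's: both rest on the obtuse-angle optimality condition $\product{\pointnew-(\point-\dpoint)}{\pointnew-\arpoint}\le 0$ combined with the three-point expansion of a squared norm, the only cosmetic difference being that you expand $\norm{\point-\arpoint}^2$ while the paper expands $\norm{\pointnew-\arpoint}^2$ and then regroups. Nothing further is needed.
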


\begin{proof}
Since $\arpoint\in\cvx$, we have the following property $\product{\pointnew-(\point-y)}{\pointnew-\arpoint} \le 0$, leading to
\begin{align}
  \norm{\pointnew - \arpoint}^2
  & = \norm{\pointnew - \point + \point - \arpoint}^2
  \notag\\
  & = \norm{\point - \arpoint}^2
  + 2 \product{\pointnew-\point}{\point-\arpoint}
  + \norm{\pointnew - \point}^2
  \notag\\
  & = \norm{\point - \arpoint}^2
  + 2 \product{\pointnew-\point}{\pointnew-\arpoint}
  - \norm{\pointnew - \point}^2
  \notag\\
  & \le
  \norm{\point - \arpoint}^2
  - 2 \product{\dpoint}{\pointnew-\arpoint}
  - \norm{\pointnew - \point}^2.
  \qedhere
\end{align}
\end{proof}

\begin{lemma}
\label{lem:4points}
Let 
$\point, \dpoint_1, \dpoint_2\in\vecspace$
and $\cvx_1, \cvx_2\subseteq\vecspace$ be two closed convex sets.
We set $\pointnew_1 \defeq \proj_{\cvx_1}(\point-\dpoint_1)$ and
$\pointnew_2 \defeq \proj_{\cvx_2}(\point-\dpoint_2)$.
\begin{enumerate}[wide, label=\upshape (\alph*)] 
    \item \label{lem:4points-a}
    If $\cvx_2=\vecspace$, for all $\arpoint\in\vecspace$, it holds
    \begin{equation}\label{eq:4points-a}
        \norm{\pointnew_2 - \arpoint}^2
        = \norm{\point-\arpoint}^2 - 2\product{\dpoint_2}{\pointnew_1-\arpoint}
        + \norm{\pointnew_2 - \pointnew_1}^2 - \norm{\pointnew_1 - \point}^2.
    \end{equation}
    \item \label{lem:4points-b}
    If $\cvx_2\subseteq\cvx_1$, for all $\arpoint\in\cvx_2$, it holds
    \begin{align}
        \norm{\pointnew_2 - \arpoint}^2
        &\le
        \norm{\point-\arpoint}^2 - 2\product{\dpoint_2}{\pointnew_1-\arpoint}
        + 2\product{\dpoint_2-\dpoint_1}{\pointnew_1-\pointnew_2}
        \notag\\
        &\enspace - \norm{\pointnew_2 - \pointnew_1}^2 - \norm{\pointnew_1 - \point}^2
        \notag\\
        &\le
        \norm{\point-\arpoint}^2 - 2\product{\dpoint_2}{\pointnew_1-\arpoint}
        + \norm{\dpoint_2 - \dpoint_1}^2 - \norm{\pointnew_1 - \point}^2.
    \label{eq:4points-b}
    \end{align}
\end{enumerate}

\end{lemma}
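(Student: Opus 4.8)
The plan is to handle the two parts separately: part~(a) is a bare algebraic identity, while part~(b) combines \cref{lem:3points} with the variational (obtuse-angle) characterization of the Euclidean projection.

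For part~(a), the decisive simplification is that projecting onto $\cvx_2 = \vecspace$ is the identity map, so $\pointnew_2 = \point - \dpoint_2$. I would then expand $\norm{\pointnew_2 - \arpoint}^2 = \norm{\point - \arpoint}^2 - 2\product{\dpoint_2}{\point - \arpoint} + \norm{\dpoint_2}^2$, rewrite the cross term by splitting $\point - \arpoint = (\pointnew_1 - \arpoint) + (\point - \pointnew_1)$, and check that the residual $-2\product{\dpoint_2}{\point - \pointnew_1} + \norm{\dpoint_2}^2$ equals $\norm{\pointnew_2 - \pointnew_1}^2 - \norm{\pointnew_1 - \point}^2$; the latter follows by substituting $\pointnew_2 = \point - \dpoint_2$ once more into $\norm{\pointnew_2 - \pointnew_1}^2$. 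No inequality is lost, so \eqref{eq:4points-a} holds as an identity.

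For part~(b), I would first apply \cref{lem:3points} with $\cvx = \cvx_2$, $\dpoint = \dpoint_2$ and the admissible test point $\arpoint \in \cvx_2$, giving $\norm{\pointnew_2 - \arpoint}^2 \le \norm{\point - \arpoint}^2 - 2\product{\dpoint_2}{\pointnew_2 - \arpoint} - \norm{\pointnew_2 - \point}^2$. Writing $\pointnew_2 - \arpoint = (\pointnew_1 - \arpoint) - (\pointnew_1 - \pointnew_2)$ isolates the target term $-2\product{\dpoint_2}{\pointnew_1 - \arpoint}$ and leaves a surplus $+2\product{\dpoint_2}{\pointnew_1 - \pointnew_2}$. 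To introduce $\dpoint_1$ and turn this surplus into $2\product{\dpoint_2 - \dpoint_1}{\pointnew_1 - \pointnew_2}$, the crucial step is to invoke the projection inequality for $\pointnew_1 = \proj_{\cvx_1}(\point - \dpoint_1)$ at the point $\pointnew_2$, namely $\product{\pointnew_1 - (\point - \dpoint_1)}{\pointnew_1 - \pointnew_2} \le 0$; this is legitimate precisely because $\cvx_2 \subseteq \cvx_1$ guarantees $\pointnew_2 \in \cvx_1$. Rearranged, it reads $\product{\dpoint_1}{\pointnew_1 - \pointnew_2} \le \product{\point - \pointnew_1}{\pointnew_1 - \pointnew_2}$. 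Feeding this into the Pythagorean expansion $\norm{\pointnew_2 - \point}^2 = \norm{\pointnew_2 - \pointnew_1}^2 + 2\product{\pointnew_2 - \pointnew_1}{\pointnew_1 - \point} + \norm{\pointnew_1 - \point}^2$ bounds $-\norm{\pointnew_2 - \point}^2$ above by $-\norm{\pointnew_2 - \pointnew_1}^2 - 2\product{\dpoint_1}{\pointnew_1 - \pointnew_2} - \norm{\pointnew_1 - \point}^2$, and collecting terms yields the first line of \eqref{eq:4points-b}.

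The second, coarser bound in \eqref{eq:4points-b} then follows by completing the square: the elementary estimate $2\product{\dpoint_2 - \dpoint_1}{\pointnew_1 - \pointnew_2} - \norm{\pointnew_1 - \pointnew_2}^2 \le \norm{\dpoint_2 - \dpoint_1}^2$ absorbs the quadratic difference term and replaces the inner product by $\norm{\dpoint_2 - \dpoint_1}^2$. I expect the only real subtlety to be the bookkeeping of cross terms and, above all, applying the projection inequality for $\pointnew_1$ at the test point $\pointnew_2$ \textendash\ this is exactly where the nesting hypothesis $\cvx_2 \subseteq \cvx_1$ enters, and the argument would collapse without it.
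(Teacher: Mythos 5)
Your proof is correct and follows essentially the same route as the paper's: part~(a) is the same direct algebraic verification built on $\pointnew_2 = \point - \dpoint_2$, and in part~(b) your combination of the projection inequality for $\pointnew_1$ at the test point $\pointnew_2$ (legitimate precisely because $\cvx_2 \subseteq \cvx_1$) with the Pythagorean expansion is exactly the paper's second application of \cref{lem:3points} to $(\point, \dpoint_1, \pointnew_1, \pointnew_2, \cvx_1)$, merely unrolled instead of cited. The paper invokes \cref{lem:3points} twice and sums the two resulting inequalities, then closes with the same Young's inequality you use, so the two arguments coincide in content.
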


\begin{proof}
\begin{enumerate}[wide, label=(\alph*)]
\item

We develop
\begin{align}
    \norm{\pointnew_2-\arpoint}^2
    &=\norm{\pointnew_2-\pointnew_1+\pointnew_1-\point+\point-\arpoint}^2
    \notag\\
    &=\norm{\pointnew_2-\pointnew_1}^2+\norm{\pointnew_1-\point}^2+\norm{\point-\arpoint}^2
    \notag\\
    &\enspace
    + 2\product{\pointnew_2-\pointnew_1}{\pointnew_1-\arpoint}
    + 2\product{\pointnew-\point}{\point-\arpoint}
    \notag\\
    &=\norm{\pointnew_2-\pointnew_1}^2-\norm{\pointnew_1-\point}^2+\norm{\point-\arpoint}^2
    \notag\\
    &\enspace
    + 2\product{\pointnew_2-\pointnew_1}{\pointnew_1-\arpoint}
    + 2\product{\pointnew_1-\point}{\pointnew_1-\arpoint}
    \notag\\
    &=\norm{\point-\arpoint}^2 - 2\product{\dpoint_2}{\pointnew_1-\arpoint}
    + \norm{\pointnew_2 - \pointnew_1}^2 - \norm{\pointnew_1 - \point}^2,
\end{align}
where in the last line we use $\pointnew_2-\point=-\dpoint_2$ since $\cvx_2=\vecspace$.

\item
With $\pointnew_2\in\cvx_2\subseteq\cvx_1$, we can apply \autoref{lem:3points} to
$(\point, \dpoint, \pointnew, \arpoint, \cvx)
\subs (\point, \dpoint_2, \pointnew_2, \arpoint, \cvx_2)$
and
$(\point, \dpoint, \pointnew, \arpoint, \cvx)
\subs (\point, \dpoint_1, \pointnew_1, \pointnew_2, \cvx_1)$, which yields
\begin{gather}
  \label{eq:4points-b1}
  \norm{\pointnew_2 - \arpoint}^2
  \le
  \norm{\point - \arpoint}^2
  - 2 \product{\dpoint_2}{\pointnew_2-\arpoint}
  - \norm{\pointnew_2 - \point}^2, \\
  \label{eq:4points-b2}
  \norm{\pointnew_1 - \pointnew_2}^2
  \le
  \norm{\point -  \pointnew_2}^2
  - 2 \product{\dpoint_1}{\pointnew_1- \pointnew_2}
  - \norm{\pointnew_1 - \point}^2.
\end{gather}
By summing \eqref{eq:4points-b1} and \eqref{eq:4points-b2}, we readily get the
first inequality of \eqref{eq:4points-b}.
We conclude with help of Young's inequality
$2\product{\dpoint_2-\dpoint_1}{\pointnew_1-\pointnew_2} \le
\norm{\dpoint_2-\dpoint_1}^2 + \norm{\pointnew_1-\pointnew_2}^2$.
\qedhere
\end{enumerate}
\end{proof}

\begin{lemma}[{{\citet[Lemma~1]{Chu54}}}]
\label{lem:chung1954}
Let $\seqinf{\seqitem}{\run}$ be a sequence of real numbers
and $\strongstepm, \run_0\in\N$
such that for all $\run\ge\run_0$,
\begin{equation}
    \label{eq:chung_rec}
    \update[\seqitem]
    \le
    \left(1-\frac{\consc}{\run+\strongstepm}\right)\current[\seqitem]
    + \frac{\conscalt}{(\run+\strongstepm)^2},
\end{equation}
where $\consc > 1$ and $\conscalt > 0$.
Then,
\begin{equation}
    \label{eq:chung_bound}
    \seqitem_\run
    \le
    \frac{\conscalt}{\consc-1}\frac{1}{\run} + \smalloh\left(\frac{1}{\run}\right).
\end{equation}
\end{lemma}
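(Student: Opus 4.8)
The plan is to unroll the one-step recurrence \eqref{eq:chung_rec} into an explicit product-plus-sum formula and then estimate the two pieces sharply enough to recover the exact leading constant $\conscalt/(\consc-1)$, rather than a mere $\bigoh(1/\run)$. Write $C = \conscalt/(\consc-1)$ for the target constant. First I would fix $\run_1 \ge \run_0$ large enough that $\consc/(\intg+\strongstepm) \in (0,1)$ for every $\intg \ge \run_1$; this guarantees that all the factors $1-\consc/(\intg+\strongstepm)$ that appear below are positive, so that \eqref{eq:chung_rec} can be iterated repeatedly without reversing the inequality. Iterating from $\run_1$ yields, for $\run > \run_1$,
\begin{equation}
\seqitem_\run
\le
\seqitem_{\run_1}\prod_{\intg=\run_1}^{\run-1}\left(1-\frac{\consc}{\intg+\strongstepm}\right)
+ \conscalt\sum_{\intg=\run_1}^{\run-1}\frac{1}{(\intg+\strongstepm)^2}
\prod_{\runalt=\intg+1}^{\run-1}\left(1-\frac{\consc}{\runalt+\strongstepm}\right).
\end{equation}

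The crux of the argument is a uniform upper bound on the inner products. Taking logarithms and using $\log(1-x)\le -x$ together with the comparison $\sum_{\runalt=\intg+1}^{\run-1}(\runalt+\strongstepm)^{-1} \ge \int_{\intg+1}^{\run}(x+\strongstepm)^{-1}\dd x = \log\tfrac{\run+\strongstepm}{\intg+1+\strongstepm}$, I would establish
\begin{equation}
\prod_{\runalt=\intg+1}^{\run-1}\left(1-\frac{\consc}{\runalt+\strongstepm}\right)
\le
\left(\frac{\intg+1+\strongstepm}{\run+\strongstepm}\right)^{\consc}.
\end{equation}
I expect this product estimate — and in particular making it tight and uniform in the inner index $\intg$ — to be the main obstacle, since anything looser would blur the constant $C$ into an uncontrolled $\bigoh(1/\run)$. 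The same bound with $\intg=\run_1-1$ controls the first term, giving $\seqitem_{\run_1}\prod(\cdots) = \bigoh(\run^{-\consc}) = \smalloh(1/\run)$, which is absorbed into the error because $\consc>1$ (the sign of $\seqitem_{\run_1}$ is irrelevant here, as this term is $\bigoh(\abs{\seqitem_{\run_1}}\run^{-\consc})$ regardless).

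For the second, dominant term I would substitute the product bound to obtain
\begin{equation}
\conscalt\sum_{\intg=\run_1}^{\run-1}\frac{1}{(\intg+\strongstepm)^2}
\left(\frac{\intg+1+\strongstepm}{\run+\strongstepm}\right)^{\consc}
\le
\frac{\conscalt'}{(\run+\strongstepm)^{\consc}}\sum_{\intg=\run_1}^{\run-1}(\intg+\strongstepm)^{\consc-2},
\end{equation}
where the shifted numerator contributes only lower-order factors. Since $\consc-2 > -1$, a sum–integral comparison gives $\sum_{\intg=\run_1}^{\run-1}(\intg+\strongstepm)^{\consc-2} = \frac{(\run+\strongstepm)^{\consc-1}}{\consc-1}(1+\smalloh(1))$, the dominant mass sitting at the upper endpoint. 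Multiplying through collapses the powers of $(\run+\strongstepm)$ and yields $\frac{\conscalt}{\consc-1}\frac{1}{\run+\strongstepm}(1+\smalloh(1)) = \frac{\conscalt}{\consc-1}\frac{1}{\run}+\smalloh(1/\run)$, which is exactly \eqref{eq:chung_bound}.

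As an alternative route that avoids the product asymptotics, one can first prove the crude bound $\seqitem_\run = \bigoh(1/\run)$ by an induction-with-slack (the step $\seqitem_\run \le K/\run \Rightarrow \seqitem_{\run+1}\le K/(\run+1)$ holds for large $\run$ whenever $K > C$, and a suitable base case is secured by enlarging $K$ after fixing the threshold index), and then sharpen the constant by centering the rescaled sequence $\run\,\seqitem_\run$ at $C$. Setting $w_\run = \run\,\seqitem_\run - C$, the recurrence becomes $w_{\run+1}\le(1-p_\run)w_\run + r_\run$ with $p_\run \sim (\consc-1)/\run$ and $r_\run = \bigoh(1/\run^2)$; here the cancellation $C(\consc-1)=\conscalt$ kills the $\bigoh(1/\run)$ term in the additive part, leaving $r_\run = \smalloh(p_\run)$. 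Because $\sum_\run p_\run = \infty$ while $r_\run = \smalloh(p_\run)$, a standard accumulated-contraction argument (the product $\prod(1-p_\run)\to 0$) forces $\limsup_\run w_\run \le 0$, i.e.\ $\seqitem_\run \le C/\run + \smalloh(1/\run)$. The subtlety in this second route is that no single step contracts (the coefficient tends to $1$), so the decay must be extracted over $\Theta(\run)$ steps while simultaneously handling possibly negative $w_\run$.
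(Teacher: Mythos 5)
Your proposal is correct, and your main route is genuinely different from the paper's. The paper uses an exact-shift argument: it sets $\seqitemalt_\run = \seqitem_\run - \frac{\conscalt}{(\consc-1)(\run+\strongstepm)}$ and checks, through the algebraic identity \eqref{eq:chung_basic_ineq}, that this shift over-compensates the forcing term, so the shifted sequence obeys the purely homogeneous inequality \eqref{eq:chung_rec_short}; a dichotomy then finishes the proof (once $\seqitemalt_\run\le 0$ it stays non-positive, which gives the bound outright; if it never does, it is positive and crushed by $\prod_{\runalt}\bigl(1-\consc/(\runalt+\strongstepm)\bigr) = \bigoh(\run^{-\consc}) = \smalloh(1/\run)$). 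Your second, alternative route is essentially this same idea in rescaled form: centering $w_\run = \run\,\seqitem_\run - C$ exploits the same cancellation $C(\consc-1)=\conscalt$, but leaves an $\bigoh(1/\run^2)$ remainder that you must then remove by an accumulated-contraction/$\eps$-argument, whereas the paper's choice of shift makes the remainder vanish identically. Your primary route \textendash\ unrolling the recursion into a product-plus-sum formula, bounding all inner products by $\bigl((\intg+1+\strongstepm)/(\run+\strongstepm)\bigr)^{\consc}$ via $\log(1-x)\le -x$ plus integral comparison, and evaluating an endpoint-dominated sum \textendash\ is a true variation-of-constants argument: it is more mechanical, needs no inspired guess of the shift, and generalizes readily (e.g.\ to other powers in the forcing term), at the price of heavier asymptotic bookkeeping.

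One step of that primary route needs care so as not to lose the sharp constant. As displayed, you bound the summand by $\conscalt'(\intg+\strongstepm)^{\consc-2}/(\run+\strongstepm)^{\consc}$ with a constant $\conscalt'$ absorbing the shift from $(\intg+1+\strongstepm)^{\consc}$ to $(\intg+\strongstepm)^{\consc}$, yet you conclude with $\conscalt$: if $\conscalt'$ were a genuine uniform constant strictly larger than $\conscalt$, the final bound would be $\frac{\conscalt'}{\consc-1}\frac{1}{\run} + \smalloh(1/\run)$, which does not prove the lemma. The repair is what you gesture at with ``lower-order factors'': write $(\intg+1+\strongstepm)^{\consc} = (\intg+\strongstepm)^{\consc}\bigl(1+\bigoh(1/\intg)\bigr)$, so the correction contributes $\bigoh\bigl(\sum_{\intg}(\intg+\strongstepm)^{\consc-3}\bigr)$, which is $\smalloh(\run^{\consc-1})$ in every case ($\consc<2$, $\consc=2$, and $\consc>2$); hence the sum equals $\frac{(\run+\strongstepm)^{\consc-1}}{\consc-1}\bigl(1+\smalloh(1)\bigr)$ and the constant $\frac{\conscalt}{\consc-1}$ survives. (Alternatively: fix $\eps>0$, choose $\run_1$ large enough that the shift factor is at most $1+\eps$, and let $\eps\to 0$ at the end.) With either repair your argument is complete.
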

\begin{proof}
For the sake of completeness, we provide a basic proof for the above lemma (which is a direct corollary of \citet[Lemma~1]{Chu54}).
Let $\consc > 1$ and $\intg\in\N$, we have
\begin{equation}
    \frac{1}{\intg+1}
    - \left(1-\frac{\consc}{\intg}\right)\frac{1}{\intg}
    =
    \frac{\consc}{\intg^2}
    - \left(\frac{1}{\intg} - \frac{1}{\intg+1}\right)
    = \frac{\consc-1}{\intg^2} + \frac{1}{\intg^2(\intg+1)}.
\end{equation}
This shows that for any $\conscalt>0$
\begin{equation}
    \label{eq:chung_basic_ineq}
    \frac{\conscalt}{\consc-1}
    \left(
        \frac{1}{\intg+1}
        - \left(1-\frac{\consc}{\intg}\right)\frac{1}{\intg}
    \right) = \frac{\conscalt}{\intg^2} + \frac{\conscalt}{\intg^2(\intg+1)(\consc-1)} \ge \frac{\conscalt}{\intg^2}.
\end{equation}
By substituting $\intg\subs\run+\strongstepm$,
\eqref{eq:chung_rec} combined with \eqref{eq:chung_basic_ineq} yields
\begin{equation}
    \label{eq:chung_rec_long}
    \update[\seqitem] - \frac{\conscalt}{\consc-1} \frac{1}{\run+\strongstepm+1}
    \le
    \left(1-\frac{\consc}{\run+\strongstepm}\right)
    \left(\current[\seqitem]
    - \frac{\conscalt}{\consc-1} \frac{1}{\run+\strongstepm}\right).
\end{equation}
Let us define
$\current[\seqitemalt] \defeq \current[\seqitem] - \conscalt/((\consc-1)(\run+\strongstepm))$.
\eqref{eq:chung_rec_long} becomes
\begin{equation}
    \label{eq:chung_rec_short}
    \update[\seqitemalt]
    \le
    \left(1-\frac{\consc}{\run+\strongstepm}\right)
    \current[\seqitemalt].
\end{equation}
This inequality holds for all $\run\ge\run_0$. Then, either: \\
\textbullet~  $\current[\seqitemalt]$ becomes non-positive for some
$\run > \run_1 = \max(\run_0, \floor{\consc}-\strongstepm)$, and \eqref{eq:chung_rec_short} implies that this is also the case for all subsequent $\run$, which leads to 
%
\begin{equation}
    \current[\seqitem][\run] \le \frac{\conscalt}{\consc-1} \frac{1}{\run+\strongstepm}.
\end{equation}

\noindent \textbullet~  or $\current[\seqitemalt]$ is positive for all $\run > \run_1$ and we get 
\begin{equation}
    0 < \seqitemalt_\run
    \le
    \seqitemalt_{\run_1}\prod_{\runalt=\run_1}^{\run-1}
    \left(1-\frac{\consc}{\runalt+\strongstepm}\right)
    = \bigoh\left(\frac{1}{\run^\consc}\right)
    = \smalloh\left(\frac{1}{\run}\right).
\end{equation}
In both cases, \eqref{eq:chung_bound} is verified.
\end{proof}



\begin{lemma}
\label{lem:regualr}
Let $\sol$ be a regular solution of \eqref{eq:SVI}. Then, there exists constants $\nhdradius, \strong, \lips > 0$
such that $\vecfield$ is $\lips$-Lipschitz continuous on $\cpt\defeq\ball_{\nhdradius}(\sol)$
and $\product{\vecfield(\point)}{\point-\sol}\ge\strong\norm{\point-\sol}^2$ for all $\point\in\nhd\defeq\points\intersect\cpt$.
\end{lemma}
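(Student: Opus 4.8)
The plan is to establish the two assertions separately, both as consequences of the $C^{1}$ regularity of $\vecfield$ near $\sol$ together with the positive-definiteness of $\Jacf{\vecfield}{\sol}$ supplied by \cref{def:regular}. Throughout, the closed ball $\cpt=\ball_{\nhdradius}(\sol)$ will be shrunk as needed, and the two steps will be reconciled at the end by taking the smaller of the two admissible radii.

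For the Lipschitz bound, I would first note that since $\vecfield$ is $C^{1}$ on a neighborhood of $\sol$, the Jacobian map $\point\mapsto\Jacf{\vecfield}{\point}$ is continuous there; hence on a small enough closed ball $\cpt$ its operator norm is bounded by some finite $\lips$. Because $\cpt$ is convex, writing $\vecfield(\pointalt)-\vecfield(\point)=\int_{0}^{1}\Jacf{\vecfield}{\point+s(\pointalt-\point)}(\pointalt-\point)\dd s$ and bounding the integrand by $\lips\norm{\pointalt-\point}$ yields $\norm{\vecfield(\pointalt)-\vecfield(\point)}\le\lips\norm{\pointalt-\point}$ for all $\point,\pointalt\in\cpt$.

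For the one-point strong monotonicity bound, the key is to upgrade the pointwise positive-definiteness of \cref{def:regular} to a uniform one. Since $\tvec^{\top}\Jacf{\vecfield}{\sol}\tvec>0$ for every nonzero $\tvec$ in the tangent cone $\tcone_{\points}(\sol)$, and the set $\tcone_{\points}(\sol)\cap\{\norm{\tvec}=1\}$ is compact in $\R^{\vdim}$ (closed and bounded), the continuous quadratic form attains a strictly positive minimum $\strong_{0}$ on it; by homogeneity, $\tvec^{\top}\Jacf{\vecfield}{\sol}\tvec\ge\strong_{0}\norm{\tvec}^{2}$ for all $\tvec\in\tcone_{\points}(\sol)$. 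Next, I would take a first-order Taylor expansion of $\vecfield$ at $\sol$, whose remainder has norm $\smalloh(\norm{\point-\sol})$ by $C^{1}$-smoothness, form the inner product of $\vecfield(\point)$ with $\point-\sol$, and invoke three facts: (i) $\sol$ solves \eqref{eq:SVI}, so $\braket{\vecfield(\sol)}{\point-\sol}\ge0$ for $\point\in\points$; (ii) by convexity of $\points$, the feasible direction $\point-\sol$ lies in $\tcone_{\points}(\sol)$, whence $\braket{\Jacf{\vecfield}{\sol}(\point-\sol)}{\point-\sol}\ge\strong_{0}\norm{\point-\sol}^{2}$; and (iii) the remainder term is at least $-(\strong_{0}/2)\norm{\point-\sol}^{2}$ once $\nhdradius$ is small enough. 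Combining these on $\nhd=\points\intersect\cpt$ gives the claim with $\strong=\strong_{0}/2$.

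The main obstacle I anticipate is the constrained case: the hypothesis only furnishes positive-definiteness along tangent directions, so I must justify that $\point-\sol$ is an admissible test direction—namely that $\point-\sol\in\tcone_{\points}(\sol)$—and use the variational inequality at $\sol$ to discard the first-order term $\braket{\vecfield(\sol)}{\point-\sol}$. In the unconstrained setting this is vacuous, since $\vecfield(\sol)=0$ and every direction is tangent, but for general closed convex $\points$ these two observations are precisely what allow the quadratic lower bound to survive, and getting a \emph{uniform} constant $\strong_{0}$ out of the merely pointwise hypothesis is what the compactness argument is for.
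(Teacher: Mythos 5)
Your proof is correct. Both it and the paper's proof share the same skeleton: a compactness argument over the unit-sphere section of the (closed) tangent cone $\tcone_{\points}(\sol)$ gives a strictly positive lower bound on the quadratic form at $\sol$; the inequality $\braket{\vecfield(\sol)}{\point-\sol}\ge 0$ from \eqref{eq:SVI} and the inclusion $\point-\sol\in\tcone_{\points}(\sol)$ (convexity of $\points$) handle the constrained case; and the Lipschitz half is the routine consequence of $C^{1}$-smoothness on a compact ball. Where you genuinely diverge is in how the pointwise positivity at $\sol$ is propagated to the neighborhood. The paper introduces the matrix function
\begin{equation}
\matfunc(\mat)
    = \min_{\tvec\in\tcone_{\points}(\sol),\,\norm{\tvec}=1}\tvec^{\top}\mat\tvec ,
\end{equation}
notes that it is concave (a pointwise minimum of linear functions) and hence continuous, so that continuity of $\point\mapsto\Jacf{\vecfield}{\point}$ yields the \emph{uniform} bound $\matfunc(\Jacf{\vecfield}{\point})\ge\strong$ on the whole ball $\cpt$; it then writes $\vecfield(\point)-\vecfield(\sol)$ as an exact integral of Jacobians along the segment $[\sol,\point]$ and bounds every integrand from below. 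You instead use the Jacobian bound \emph{only at $\sol$} and absorb the deviation into the first-order Taylor remainder, which costs a factor of two ($\strong=\strong_{0}/2$). The paper's route buys a cleaner constant and the reusable intermediate fact that the Jacobian is uniformly positive definite along tangent directions throughout $\cpt$; your route is more elementary, in that it sidesteps the concavity/continuity argument for $\matfunc$ entirely, and for the monotonicity half it needs only differentiability of $\vecfield$ at the single point $\sol$ (plus Cauchy\textendash Schwarz on the remainder) rather than uniform control of the Jacobian over a ball. Both are standard and fully rigorous proofs of the same statement.
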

\begin{proof}
The Lipschitz continuity is straightforward:
a $C^{1}$-smooth operator is necessarily locally Lipschitz and thus Lipshitz on every compact. The proof consists in establishing the existence of $\strong$. To this end, we consider the following function:
\begin{equation}
\begin{array}{crcl}
\matfunc \colon & \R^{\vdim\times\vdim} & \longrightarrow & \R \\
                & \mat                  & \longmapsto     & \min_{\tvec\in\tcone_{\!\points}(\sol),\norm{\tvec}=1}\tvec^\top\mat\tvec
\end{array}
\end{equation}
where $\tcone_\points(\sol)$ denotes the tangent cone to $\points$ at $\sol$.
The function $\matfunc$ is concave as it is defined as a pointwise minimum over a set of linear functions.
This in turn implies the continuity $\matfunc$ because every concave function is continous on the interior of its effective domain.
The solution $\sol$ being regular, we have $\matfunc(\Jacf{\vecfield}{\sol})>0$.
Combined with the continuity of $\Jac_\vecfield$ in a neighborhood of $\sol$, we deduce the existence of $\nhdradius, \strong>0$
such that $\matfunc(\Jacf{\vecfield}{\point})\ge\strong$ for all $\point\in\cpt=\ball_{\nhdradius}(\sol)$.
Now let $\point\in\nhd=\points\intersect\cpt$. 
It holds:
\begin{equation}
    \vecfield(\point) - \vecfield(\sol) =
    \left(\int_0^1 \Jacf{\vecfield}{\sol+\scalar(\point-\sol)}\dd\scalar\right)(\point-\sol).
\end{equation}
Consequently, writing $\tvec=\point-\sol \in \tcone_\points(\sol)$, $\pointalt_\scalar=\sol+\scalar(\point-\sol)\in\cpt$, we have
\begin{align}
    &\product{\vecfield(\point)-\vecfield(\sol)}{\point-\sol}
    =
    \tvec^\top\left(\int_0^1 \Jacf{\vecfield}{\pointalt_\scalar}\dd\scalar\right)\tvec\\
    &~~~~~~~~~~~~~~~~~~~~~~~~~~~~~~~~~~~\ge
    \left(\int\matfunc(\Jacf{\vecfield}{\pointalt_\scalar})\dd\scalar\right) \norm{\tvec}^2 \ge \alpha\norm{\tvec}^2 = \alpha\norm{\point-\sol}^2.
\end{align}
Finally, since $\sol$ is a solution of \eqref{eq:SVI}, we have $\product{\vecfield(\sol)}{\point-\sol}\ge0$
and 
\begin{align}
\product{\vecfield(\point)}{\point-\sol}\ge\product{\vecfield(\point)-\vecfield(\sol)}{\point-\sol}\ge\alpha\norm{\point-\sol}^2.
\end{align}
This ends the proof.
\end{proof}

\section{Proofs for the deterministic setting}
\label{app:det-proofs}

\subsection{Proof of \autoref{lem:descent}}



In the definition of $\reserr$, instead of taking $\points_\radius = \points\intersect\ballr{0}{\radius}$ we consider $\points_\radius = \points\intersect\ballr{\state_\start}{\radius}$.
Summing \eqref{eq:descent} over $\runalt$ and rearranging the term leads to
\begin{equation}
\label{eq:descent_tele}
    \sum_{\runalt=\start}^{\run}
    2\scalar_\runalt\product{\vecfield(\inter[\state][\runalt])}{\inter[\state][\runalt]-\arpoint}
    \le \norm{\state_{\start}-\arpoint}^2 - \norm{\update -\arpoint}^2 + \tele_\start - \update[\tele]
    \le \norm{\state_{\start}-\arpoint}^2 + \tele_\start.
\end{equation}
For any $\arpoint\in\points_\radius$, we have
$\norm{\state_\start-\arpoint}^2\le\radius^2$, and by monoticity of $\vecfield$,
\begin{equation}
    \product{\vecfield(\arpoint)}{\inter[\state][\runalt]-\arpoint} \le \product{\vecfield(\inter[\state][\runalt])}{\inter[\state][\runalt]-\arpoint}.   
\end{equation}
In other words, for all $\arpoint\in\points_\radius$,
\begin{equation}
    2\sum_{\runalt=\start}^{\nRunsNew}\current[\scalar][\runalt]\product{\vecfield(\arpoint)}{\inter[\state][\runalt]-\arpoint}
    \le \radius^2 + \tele_\start.
    \label{eq:ergodic_smaller}
\end{equation}
Dividing the two sides of \eqref{eq:ergodic_smaller} by $2\sum_{\runalt=\start}^{\run}\current[\scalar][\runalt]$ and
maximizing over $\arpoint\in\points_\radius$ leads to the desired result.


\subsection{Proof of \cref{thm:det-global-erg}}
\label{app:det-ergodic-proof}



To facilitate analysis and presentation of our results, 
\eqref{eq:PEG} and \eqref{eq:OG} are initialized with random $\state_\paststart$ and $\state_\start$ in $\points$
while for \eqref{eq:RG} we start with $\state_\laststart$ and $\state_\paststart$.
We are constrained to have different initial states in \eqref{eq:RG} due to its specific formulation.

The theorem is immediate from \cref{lem:descent} if we know that \eqref{eq:descent} is verified by the generated iterates for some $\seqinf{\scalar}{\run}, \seqinf{\tele}{\run}\in\R_+^\N$.
Below, we show it separately for \ac{PEG}, \ac{OG} and \ac{RG} under \cref{asm:Lipschitz} and with $\step$ selected as per the theorem statement.
Moreover, we have $\seqinf{\scalar}{\run} \equiv \step$ and $\tele_\start\le\norm{\state_\start-\state_\paststart}^2$ for all methods, hence the corresponding bound in our statement. The arguments used in the proof are inspired from \cite{Tse00,Mal15,GBVV+19} but we emphasize the relation between the analyses of these algorithms by putting forward the technical \autoref{lem:4points}.



\Paragraph{\acf{PEG}\afterhead}
For $\run \ge \start$,
the second inequality of \autoref{lem:4points} \ref{lem:4points-b} applied to 
$(\point, \dpoint_1, \dpoint_2, \pointnew_1, \pointnew_2, \cvx_1, \cvx_2)
\subs(\current, \step\vecfield(\past), \step\vecfield(\inter), \inter, \update, \points, \points)$
results in
\begin{align}
  \norm{\update - \arpoint}^2
  & \le
  \norm{\current-\arpoint}^2
  - 2\step\product{\vecfield(\inter)}{\inter-\arpoint}
  \notag\\
  & \enspace
  + \step^2\norm{\vecfield(\inter) - \vecfield(\past)}^2
  - \norm{\inter - \current}^2
  \notag\\
  &  \le
  \norm{\current-\arpoint}^2
  - 2\step\product{\vecfield(\inter)}{\inter-\arpoint}
  \notag\\
  & \enspace
  + \step^2\lips^2\norm{\inter - \past}^2
  - \norm{\inter - \current}^2
  \label{eq:peg_det_4points}
\end{align}
where we used the fact that $\vecfield$ is $\lips$-Lipschitz continuous for the second inequality.
%

Now, let us use Young's inequality $\norm{a+b}^2 \le 2\norm{a}^2 + 2\norm{b}^2$ to get
\begin{equation}
\label{eq:peg_det_1}
  \norm{\inter - \past}^2
  \le 2\norm{\inter - \current}^2 + 2\norm{\current - \past}^2
\end{equation}
and the non-expansiveness of the projection to get for any $\run\ge\afterstart$,
\begin{equation}
\label{eq:peg_det_2}
  \norm{\current - \past}^2
  \le \norm{\last - \step\vecfield(\past)
        -\last + \step\vecfield(\pastpast)}^2
  \le \step^2\lips^2\norm{\past-\pastpast}^2.
\end{equation}
Combining \eqref{eq:peg_det_1} and \eqref{eq:peg_det_2}, we obtain
%
\begin{align}
    \norm{\inter - \past}^2
    & \le 2\norm{\inter - \current}^2
    + 2\step^2\lips^2\norm{\past - \pastpast}^2 
  \\
    & \le 2\norm{\inter - \current}^2
    +  \frac{1}{2}\norm{\past - \pastpast}^2,
    \label{eq:peg_det_trick_prev}
\end{align}
where we used the fact that $\step\le1/(2\lips)$ in the last inequality; and in order to display a telescopic term, we reformulate \eqref{eq:peg_det_trick_prev} as 
\begin{align}
    \norm{\inter - \past}^2
    &= 2 \norm{\inter - \past}^2 - \norm{\inter - \past}^2
  \notag\\
    &\le 4\norm{\inter - \current}^2
    + \norm{\past - \pastpast}^2 - \norm{\inter - \past}^2.
    \label{eq:peg_det_trick}
\end{align}

We now substitute \eqref{eq:peg_det_trick} in \eqref{eq:peg_det_4points} to get for all $\run\ge\afterstart$,
\begin{align}
  \norm{\update-\arpoint}^2
  &\le
  \norm{\current-\arpoint}^2
  -2\step\product{\vecfield(\inter)}{\inter-\arpoint}
  + (4\step^2\lips^2-1)\norm{\inter - \current}^2
  \notag\\
  &\enspace\enspace
  + \step^2\lips^2(\norm{\past - \pastpast}^2 - \norm{\inter - \past}^2)
  \notag\\
 &\le
  \norm{\current-\arpoint}^2
  -2\step\product{\vecfield(\inter)}{\inter-\arpoint}
  \notag\\
  &\enspace\enspace
  + \step^2\lips^2(\norm{\past - \pastpast}^2 - \norm{\inter - \past}^2), 
\end{align}
and thus \eqref{eq:descent} holds true for all $\run\ge\afterstart$ with $\scalar_\run=\step$ and $\tele_\run=\step^2\lips^2\norm{\past - \pastpast}^2$.

Finally, for $\run=\start$, we have
\begin{align}
    &\step^2\lips^2\norm{\state_\interstart - \state_\paststart}^2
    - \norm{\state_\interstart - \state_\start}^2
    \notag\\
    &\enspace\le
    4\step^2\lips^2\norm{\state_\interstart - \state_\start}^2
    + 4\step^2\lips^2\norm{\state_\start - \state_\paststart}^2
    - \step^2\lips^2\norm{\state_\interstart - \state_\paststart}^2
    - \norm{\state_\interstart - \state_\start}^2
    \notag\\
    &\enspace\le
    4\step^2\lips^2\norm{\state_\start - \state_\paststart}^2
    -\step^2\lips^2\norm{\state_\interstart - \state_\paststart}^2,
\end{align}
which, plugged into \eqref{eq:peg_det_4points} gives
\begin{align}
&\norm{\state_\afterstart - \arpoint}^2
    \le
  \norm{\state_\start-\arpoint}^2
  - 2\step\product{\vecfield(\state_\interstart)}{\state_\interstart-\arpoint} 
  + \step^2\lips^2\norm{\state_\interstart - \state_\paststart}^2
  - \norm{\state_\interstart - \state_\start}^2
  \notag\\
    &~~~  \le
  \norm{\state_\start-\arpoint}^2
  - 2\step\product{\vecfield(\state_\interstart)}{\state_\interstart-\arpoint} 
  +  4\step^2\lips^2\norm{\state_\start - \state_\paststart}^2
    -\step^2\lips^2\norm{\state_\interstart - \state_\paststart}^2 \label{eq:peg_det_init}
\end{align}
which also matches \eqref{eq:descent} for $\run=\start$ with $\scalar_\run=\step$, $ \tele_\afterstart$ as defined previously, and $ \tele_\start = 4\step^2\lips^2\norm{\state_\start - \state_\paststart}^2 \le \norm{\state_\start - \state_\paststart}^2$. Thus, Lemma~\ref{lem:descent} enables us to conclude the proof for \acf{PEG}.

\Paragraph{\acf{OG}\afterhead}
The update of \ac{OG} with constant step-size $\step$ can be written as
\begin{equation}
    \begin{cases}
    \inter = \proj_{\points}(\current - \step\vecfield(\past))\\
    \update = \current - (\current - \inter + \step\vecfield(\inter) - \step\vecfield(\past))
\end{cases}
\end{equation}
%
In that form, we can use \autoref{lem:4points} \ref{lem:4points-a} with $(\point, \dpoint_1, \dpoint_2, \pointnew_1, \pointnew_2, \cvx_1, \cvx_2)
\subs(\current, \step\vecfield(\past), \current - \inter + \step\vecfield(\inter) - \step\vecfield(\past), \inter, \update, \points, \vecspace)$ to get
\begin{align}
  \norm{\update - \arpoint}^2
  &= 
  \norm{\current-\arpoint}^2
  + \norm{\update - \inter}^2 - \norm{\inter - \current}^2
  \notag\\
  & \enspace
  - 2\product{\current-\inter+\step\vecfield(\inter)-\step\vecfield(\past)}{\inter-\arpoint} .
    \label{eq:ogda_det_descent}
\end{align}

One the one hand, since $\inter = \proj_{\points}(\current - \step\vecfield(\past))$
and $\arpoint\in\points$, we have
\begin{equation}
    \product{\inter-(\current-\step\vecfield(\past))}{\inter-\arpoint} \le 0.
    \label{eq:ogda_det_proj}
\end{equation}
On the other other hand, by definition of $\update$ and the $\lips$-Lipschitz continuity of $\vecfield$,
\begin{equation}
\norm{\update-\inter}^2 = \step^2\norm{\vecfield(\inter) - \vecfield(\past)}^2 \le \step^2\lips^2\norm{\inter - \past}^2.
\label{eq:ogda_det_diff}
\end{equation}
Then, applying the same arguments used to get \eqref{eq:peg_det_trick}, we can show that
for all $\run\ge\afterstart$,
\begin{equation}
    \norm{\inter - \past}^2
    \le 4\norm{\inter - \current}^2
    + \norm{\past - \pastpast}^2 - \norm{\inter - \past}^2.
    \label{eq:ogda_det_tele}
\end{equation}
Putting together \eqref{eq:ogda_det_descent}, \eqref{eq:ogda_det_proj},
\eqref{eq:ogda_det_diff}, and \eqref{eq:ogda_det_tele}, we obtain for  $\step\le 1/(2\lips)$ and for all $t\ge\afterstart$, 
\begin{align}
  &\norm{\update-\arpoint}^2
  \notag\\
  &\enspace\le
  \norm{\current-\arpoint}^2
  -2\step\product{\vecfield(\inter)}{\inter-\arpoint}
  + \step^2\lips^2\norm{\inter - \past}^2
  - \norm{\inter - \current}^2
  \notag\\
  &\enspace\le
  \norm{\current-\arpoint}^2
  -2\step\product{\vecfield(\inter)}{\inter-\arpoint}
  + \step^2\lips^2(\norm{\past - \pastpast}^2 - \norm{\inter - \past}^2).
\end{align}
Finally, since \eqref{eq:peg_det_init} is still true using the same argument as for \ac{PEG}, \eqref{eq:descent} is satisfied by choosing the same $\seqinf{\tele}{\run}$
and $\seqinf{\scalar}{\run}$ as in the case of \ac{PEG}; the same result thus holds for \acf{OG}.

\Paragraph{\acf{RG}\afterhead}
We recall the update rule of \ac{RG}
\begin{equation}
\begin{cases}
    \inter = \current - (\last - \current)\\
    \update = \proj_{\points}(\current - \step\vecfield(\inter)).
\end{cases}
\end{equation}
As in the previous cases, we use \autoref{lem:4points}. Using the first inequality of Part~\ref{lem:4points-b} with $(\point, \dpoint_1, \dpoint_2, \pointnew_1, \pointnew_2, \cvx_1, \cvx_2)
\subs(\current, \last - \current, \step\vecfield(\inter), \inter, \update, \vecspace, \points)$, we get 
\begin{align}
    \norm{\update - \arpoint}^2
    &\le
    \norm{\current-\arpoint}^2
    + 2\product{\step\vecfield(\inter)-(\last-\current)}{\inter-\update}\\
    &\enspace
    - 2\step\product{\vecfield(\inter)}{\inter-\arpoint}
    - \norm{\update - \inter}^2 - \norm{\inter - \current}^2.
    \label{eq:rg_det_descent_prel}
\end{align}
%
As $\current = \proj_{\points}(\last - \step\vecfield(\past))$ and $\last,\update\in\points$,
it follows
\begin{align}
    \product{\current-(\last-\step\vecfield(\past))}{\current-\last} & \le 0,\label{eq:rg_proj_1}\\
    \product{\current-(\last-\step\vecfield(\past))}{\current-\update} & \le 0.\label{eq:rg_proj_2}
\end{align}
By summing \eqref{eq:rg_proj_1} and \eqref{eq:rg_proj_2}
and rearranging the terms, we get
\begin{equation}
    \product{\current-\last}{\inter-\update} 
    \le -\product{\step\vecfield(\past)}{\inter-\update},
\end{equation}
thus,
\begin{align}
    &2\product{\step\vecfield(\inter)-(\last-\current)}{\inter-\update}
    \notag\\
    &\enspace\le 2\product{\step\vecfield(\inter)-\step\vecfield(\past)}{\inter-\update}
    \notag\\
    &\enspace\le 2\step\lips\norm{\inter-\past}\norm{\inter-\update}.
    \label{eq:rg_proj_sum}
\end{align}

Combining \eqref{eq:rg_det_descent_prel} and \eqref{eq:rg_proj_sum}, we get 
\begin{align}
    \norm{\update - \arpoint}^2
    &\le
    \norm{\current-\arpoint}^2
    + 2\step\lips\norm{\inter-\past}\norm{\inter-\update}
    \notag\\
    &\enspace
    - 2\step\product{\vecfield(\inter)}{\inter-\arpoint}
    - \norm{\update - \inter}^2 - \norm{\inter - \current}^2.
    \label{eq:rg_det_descent}
\end{align}
By using twice Young's inequality: i)
$2\product{a}{b}\le\youngeps\norm{a}^2+(1/\youngeps)\norm{b}^2$
with $\youngeps = 1/\sqrt{2}$;
then ii)  $\norm{a+b}^2\le(1+\alt\youngeps)\norm{a}^2+(1+1/\alt\youngeps)\norm{b}^2$
with $\alt\youngeps = 1+\sqrt{2}$, we have
\begin{align}
    &2\norm{\inter-\past}\norm{\inter-\update}
    \notag\\
    &\enspace\le \frac{1}{\sqrt{2}}\norm{\inter-\past}^2 + \sqrt{2}\norm{\inter-\update}^2
    \notag\\
    &\enspace\le (1+\sqrt{2})\norm{\inter-\current}^2
        + \norm{\current-\past}^2 + \sqrt{2}\norm{\inter-\update}^2.
    \label{eq:rg_det_young}
\end{align}
Substituting \eqref{eq:rg_det_young} into \eqref{eq:rg_det_descent} yields
\begin{align}
    \norm{\update - \arpoint}^2
    &\le
    \norm{\current-\arpoint}^2
    - 2\step\product{\vecfield(\inter)}{\inter-\arpoint}
    \notag\\
    &\enspace
    + ((1+\sqrt{2})\step\lips - 1)\norm{\inter - \current}^2
    \notag\\
    &\enspace
    + \step\lips\norm{\current-\past}^2
    - (1-\sqrt{2}\step\lips)\norm{\update - \inter}^2
    \notag\\
    &\le
    \norm{\current-\arpoint}^2
    - 2\step\product{\vecfield(\inter)}{\inter-\arpoint}
    \notag\\
    &\enspace
    + \step\lips\norm{\current-\past}^2 - \step\lips\norm{\update-\inter}^2,
    \label{eq:rg_det_tele}
\end{align}
where in the last line, we used twice that $\step\le1/((1+\sqrt{2})\lips)$.
Once again, \eqref{eq:descent} is verified with the choice
$\forall\run\in\N, \tele_\run = \step\lips\norm{\current-\past}^2,
\scalar_\run = \step$ and the result thus holds for \acf{RG}.
We also notice that $\tele_\start\le\norm{\state_\start-\state_\paststart}^2$ since $\step\lips < 1$.


\subsection{\cref{lem:descent} with other suboptimality measures}

Here we discuss how the statement of \cref{lem:descent}, and consequently also that of \cref{thm:det-global-erg}, can be adjusted
to consider more adapted convergence measures in the cases of loss minimization and min-max optimization.
The notations are those of \cref{ex:function,ex:saddle}, and we write
$\avg[\point] = (\sum_{\runalt=\start}^{\nRunsNew} \current[\scalar][\runalt])^{-1}\sum_{\runalt=\start}^{\nRunsNew} \current[\scalar][\runalt] \inter[\state][\runalt]$.

\Paragraph{Loss minimization\afterhead}
$\vecfield=\nabla\obj$ is monotone implies the convexity of $\obj$, so
\begin{align}
  \product{\vecfield(\inter[\state][\runalt])}{\inter[\state][\runalt]-\arpoint}
  = \product{\nabla\obj(\inter[\state][\runalt])}{\inter[\state][\runalt]-\arpoint}
  \ge \obj(\inter[\state][\runalt]) - \obj(\arpoint).
\end{align}
With Jensen's inequality we get,
\begin{equation}
    \left(\sum_{\runalt=\start}^{\run}\scalar_\runalt\right)^{-1}
    \sum_{\runalt=\start}^{\run}
    \scalar_\runalt\product{\vecfield(\inter[\state][\runalt])}{\inter[\state][\runalt]-\arpoint}
    \ge
    \left(\sum_{\runalt=\start}^{\run}\scalar_\runalt\right)^{-1}
    \sum_{\runalt=\start}^{\run}
    \scalar_\runalt\obj(\inter[\state][\runalt]) - \obj(\arpoint)
    \ge
    \obj(\avg[\point]) - \obj(\arpoint)
\end{equation}
This is true for any $\arpoint\in\points$, and especially for $\arpoint\in\sols$.
Let $\radius=\dist(\point_\start, \sols)$. By invoking \eqref{eq:descent_tele}, we conclude
\begin{equation}
    \obj(\avg[\point])-\min\obj \le \frac{\radius^{2} + \tele_\start}{2 \sum_{\runalt=\start}^{\nRunsNew} \current[\scalar][\runalt]}.
\end{equation}

\Paragraph{Min-max optimization\afterhead}
$\vecfield=(\nabla_{\minvar}\sadobj,-\nabla_{\maxvar}\sadobj)$ being monotone is equivalent to $\sadobj$ being convex-concave.
In such saddle-point problems, the quality of a candidate solution $\test = (\test[\minvar],\test[\maxvar])$
is often assessed via the \emph{\acl{NI}} function \citep{NK55}, defined here as
\begin{equation}
\label{eq:NI}
\tag{NI}
\nikiso(\test)
	= \sup_{\maxvar\in\maxvars} \sadobj(\test[\minvar],\maxvar)
	- \inf_{\minvar\in\minvars} \sadobj(\minvar,\test[\maxvar])
\end{equation}
provided of course that the \acl{RHS} is well-posed.
Its restricted variant $\resnikiso$ can also be defined by analogy with the definition of $\reserr$.


Let us denote $\inter[\state][\runalt] = (\inter[\minvar][\runalt], \inter[\maxvar][\runalt])$ and
$\arpoint = (\minvar, \maxvar)$. By convex-concavity of $\sadobj$, it holds
\begin{align}
  \product{\vecfield(\inter[\state][\runalt])}{\inter[\state][\runalt]-\arpoint}
  &
  = \product{\nabla_{\minvar}\sadobj(\inter[\minvar][\runalt], \inter[\maxvar][\runalt])}{\inter[\minvar][\runalt]-\minvar}
  - \product{\nabla_{\maxvar}\sadobj(\inter[\minvar][\runalt], \inter[\maxvar][\runalt])}{\inter[\maxvar][\runalt]-\maxvar}
  \notag\\
  & \ge
  \sadobj(\inter[\minvar][\runalt], \inter[\maxvar][\runalt]) - \sadobj(\minvar, \inter[\maxvar][\runalt])
  + \sadobj(\inter[\minvar][\runalt], \maxvar) - \sadobj(\inter[\minvar][\runalt], \inter[\maxvar][\runalt])
  \notag\\
  & = \sadobj(\inter[\minvar][\runalt], \maxvar) - \sadobj(\minvar, \inter[\maxvar][\runalt]).
\end{align}
We can again apply Jensen's inequality to show that
\begin{equation}
\label{eq:err_bound_s}
\left(\sum_{\runalt=\start}^{\run}\scalar_\runalt\right)^{-1}
    \sum_{\runalt=\start}^{\run}
    \scalar_\runalt\product{\vecfield(\inter[\state][\runalt])}{\inter[\state][\runalt]-\arpoint}
    \ge
    \sadobj(\avg[\minvar], \maxvar) - \sadobj(\minvar, \avg[\maxvar]),
\end{equation}
where we write $\avg[\point] = (\avg[\minvar], \avg[\maxvar])$.
By \eqref{eq:descent_tele} and definition of the \acl{NI} function, maximizing over $(\minvar,\maxvar)\in\points\intersect\ballr{\state_\start}{\radius}$ gives
\begin{equation}
     \resnikiso(\avg[\point]) \le \frac{\radius^{2} + \tele_\start}{2 \sum_{\runalt=\start}^{\nRunsNew} \current[\scalar][\runalt]}.
\end{equation}

\subsection{Proof of \cref{thm:det-local}}
\label{app:det-local}

Here we provide a quick proof of \cref{thm:det-local}. 
We do not try to optimize the constants and better results could be derived by examining each algorithm carefully.
Note that since $\ac{RG}$ can evaluate $\vecfield$ at infeasible points, we need to strengthen condition \eqref{eq:Jac} in \cref{def:regular} to consider all $\tvec$ in the \emph{tangent span} of $\points$, \ie the subspace of $\vecspace$ spanned by all possible displacement vectors of the form $\tvec = \pointalt - \point$, $\point,\pointalt\in\points$.

In order to show a local geometric convergence rate we only need to show that by choosing sufficiently small constant step-size and initializing at points
sufficiently close to $\sol$, we ensure $\current\in\cpt$ for all $\run\in\N/2$ (where $\cpt$ is defined in \cref{lem:regualr} and this is in view of \cref{thm:det-global-last}).  In fact, although \cref{thm:det-global-last} is stated for strongly monotone operators, by carefully examining its
proof, it turns out that we only need
$\product{\vecfield(\inter)}{\inter-\sol}\ge\strong\norm{\inter-\sol}^2$ for some constant $\strong>0$ and all $\run\in\N$.
We thus proceed to show that $\forall \run\in\N/2, \current\in\cpt$. To do so, let us show that one can choose the initial points and $\step$ so that $\forall \run\in\N$, (\emph{i}) $\norm{\current-\sol}^2 \le \frac{\nhdradius^2}{4}$; (\emph{ii}) $\inter\in\cpt$.

\underline{Part (\emph{i}).} It is proved in \cref{app:det-ergodic-proof} that the iterates of the \ac{SEG} methods verify \eqref{eq:descent} under
\cref{asm:Lipschitz} (Lipschitz continuity) if $\step$ is smaller than some constant.
By \cref{lem:regualr} we know that $\vecfield$ is indeed Lipschitz continuous on the compact $\cpt$.
Suppose that for all $\runalt\in\N/2$, $\runalt\le\run$, we have $\current[\state][\runalt]\in\cpt$, then
it holds $\product{\vecfield(\inter[\state][\runalt])}{\inter[\state][\runalt]-\sol}\ge0$ for all $\runalt\in\intinterval{\start}{\run-1}$.
This is true for \ac{PEG} and \ac{OG} because $\inter[\state][\runalt]\in\points$ and subsequently $\inter[\state][\runalt]\in\nhd=\points\union\cpt$. 
For \ac{RG} we did mention above that we need to relax the definition of a regular solution to consider all the $\tvec\in\vecspace$ and the statement of \cref{lem:regualr} can also be modified accordingly.
Using \eqref{eq:descent}, we obtain\footnote{Please refer to the proof of \cref{thm:det-global-erg} for the exact value of $\current[\tele]$.}
\begin{equation}
    \norm{\current-\sol}^2 + \current[\tele] \le \norm{\state_\start-\sol}^2 + \tele_\start.
\end{equation}
 for the three algorithms with  $\current[\tele]\ge 0$. 
By imposing $\state_\paststart=\state_\start$ in \ac{PEG} and \ac{OG}, we get $\tele_\start = 0$.
Similarly, we may impose $\state_\laststart=\state_\paststart$ in \ac{RG}, leading to
$\tele_\start \le \norm{\state_\start-\state_\laststart}^2 \le \step^2 \norm{\vecfield(\state_\laststart)}^2$.
It is thus possible to choose the adequate initial points and $\step$ such that
$\norm{\state_\start-\sol}^2 + \tele_\start \le \frac{\nhdradius^2}{4}$, which in turn guarantees
$\norm{\current-\sol}^2 \le \frac{\nhdradius^2}{4}$. 

\underline{Part (\emph{ii}).}  We now proceed to prove that we may choose $\step$ sufficiently small such that if $\norm{\current-\sol}^2\le \frac{\nhdradius^2}{4}$ and
$\past\in\cpt$ then $\inter\in\cpt$.
We notice that for the three algorithms, we have
\begin{equation}
    \norm{\inter-\current}^2 \le \step^2\norm{\vecfield(\past)}^2
\end{equation}
by the non-expansiveness of the projection.%
\footnote{In particular this also holds for \ac{RG} since then
$\inter-\current=\current-\last=\proj_{\points}(\last - \step\vecfield(\past))-\proj_{\points}(\last)$.}
We define $\vbound\defeq\sup_{\point\in\cpt}\norm{\vecfield(\point)} < \infty$ where the finiteness of $\vbound$ comes from the continuity of $\vecfield$ and the boundedness of $\cpt$.
We choose $\gamma\le\nhdradius/(2\vbound)$ so that $\step^2\norm{\vecfield(\past)}^2\le\frac{\nhdradius^2}{4}$ since $\past\in\cpt$. Then, by Young's inequality, we get
\begin{equation}
    \norm{\inter-\sol}^2 \le 2\norm{\inter-\current}^2 + 2\norm{\current-\sol}^2 \le \nhdradius^2.    
\end{equation}
%
In other words, $\inter\in\cpt$.

\underline{Conclusion.} We first notice that the conditions on the initial points and the stepsize $\step$ do not depend on the iteration. Thus, by simple induction we have that if we initialize the algorithm such that
$$\step\le\nhdradius/(2\vbound) \quad\text{ and }\quad \norm{\state_\start-\sol}^2 + \tele_\start \le \frac{\nhdradius^2}{4},$$
then for all $\run\in\N/2, \current\in\cpt$, concluding the proof.

\section{Proofs for the stochastic setting}
\label{app:stoch-proofs}

Let us focus in this section on the \eqref{eq:PEG} algorithm:
\begin{equation}
\tag{PEG}
\begin{aligned}
\inter
	&= \proj_{\points}(\current - \current[\step]\past[\vecfield])
	\\
\update
	&= \proj_{\points}(\current - \current[\step]\inter[\vecfield])
\end{aligned}
\end{equation}

Following \cref{app:det-ergodic-proof}, we initialize the
algorithm with random $\state_\paststart$ and $\state_\start$ in $\points$.
Recall that 
$\seqinf[\frac{\N}{2}]{\filter}{\run}$ denotes the natural filtration
associated with the sequence $\seqinf[\frac{\N}{2}]{\state}{\run}$. 
In the \ac{PEG} algorithm, we have $\current[\filter] = \inter[\filter]$ for all $\run\in\N$ (thus $\inter$ is $\filter_\run$-measurable) so the zero-mean hypothesis \eqref{eq:mean} can be written as $\exof{\inter[\noise]\given{\current[\filter]}} = 0$.

\subsection{Proof of \cref{thm:stoch-global}}
\label{app:stoch-global-proofs}


\paragraph{Last iterate convergence\afterhead}
As in the proof of \autoref{thm:det-global-erg}, we first apply \autoref{lem:4points} \ref{lem:4points-b} with $(\point, \dpoint_1, \dpoint_2, \pointnew_1, \pointnew_2, \cvx_1, \cvx_2)
\subs(\current, \current[\step]\past[\vecfield], \current[\step]\inter[\vecfield], \inter, \update, \points, \points)$ and the solution $\sol\in\points$ as a trial point to obtain
\begin{align}
  \norm{\update-\sol}^2
  & \le
  \norm{\current-\sol}^2
  - 2\current[\step]\product{\inter[\vecfield]}{\inter-\sol}
  \notag\\
  & \enspace
  + \current[\step]^2\norm{\inter[\vecfield] - \past[\vecfield]}^2
  - \norm{\inter - \current}^2.
  \label{eq:peg_stoch_4points}
\end{align}
%
The following holds true thanks to the law of total expectation, 
\begin{align}
    \ex[ & \norm{\inter[\vecfield] - \past[\vecfield]}^2]
    \notag\\ 
     & =  
     \ex[\norm{\vecfield(\inter) - \past[\vecfield]}^2
    + 2\product{\inter[\noise]}{\vecfield(\inter) - \past[\vecfield]}
    + \norm{\inter[\noise]}^2]
    \notag\\
    & = \ex[\norm{\vecfield(\inter) - \past[\vecfield]}^2]
    + 2\ex[\exof{\product{\inter[\noise]}{\vecfield(\inter) - \past[\vecfield]}\given{\current[\filter]}}]
    + \ex[\norm{\inter[\noise]}^2]
    \notag\\
    & = \ex[\norm{\vecfield(\inter) - \past[\vecfield]}^2] + \ex[\norm{\inter[\noise]}^2].
    \label{eq:peg_stoch_noise_ex}
\end{align}
By Young's inequality, $\lips$-Lipschitz continuity of $\vecfield$,
and non-expansiveness of the projection, we have
\begin{align}
    & \norm{\vecfield(\inter) - \past[\vecfield]}^2
     \le
    2\norm{\vecfield(\inter) - \vecfield(\past)}^2 + 2\norm{\past[\noise]}^2
    \notag\\
    &~~~~~~~~~~~~~~~ \le
    2\lips^2\norm{\inter - \past}^2 + 2\norm{\past[\noise]}^2
    \notag\\
    &~~~~~~~~~~~~~~~ \le
    4\lips^2\norm{\inter-\current}^2
    + 4\lips^2\norm{\current-\past}^2
    + 2\norm{\past[\noise]}^2
    \notag\\
    &~~~~~~~~~~~~~~~ \le
    4\lips^2\norm{\inter-\current}^2
    + 4\last[\step]^2\lips^2\norm{\past[\vecfield]-\pastpast[\vecfield]}^2
    + 2\norm{\past[\noise]}^2.
    \label{eq:peg_stoch_noise_young}
\end{align}
Notice that the choice $\strongstepm\ge4\lips\step$ implies
$8\current[\step]^2\lips^2+2\current[\step]\lips\le1$, which in turn yields
$8\current[\step]^2\lips^2\le1-\strong\current[\step]$.
Combining \eqref{eq:peg_stoch_noise_ex} and \eqref{eq:peg_stoch_noise_young}, similarly to \eqref{eq:peg_det_trick}, we can thus show that
\begin{align}
    \ex[\norm{\inter[\vecfield] - \past[\vecfield]}^2]
    &\le
    8\lips^2\ex[\norm{\inter-\current}^2]
    + 8\last[\step]^2\lips^2\ex[\norm{\past[\vecfield]-\pastpast[\vecfield]}^2]
    \notag\\
    &~~~~~
    + 4\ex[\norm{\past[\noise]}^2] + 2\ex[\norm{\inter[\noise]}^2]
    -\ex[\norm{\inter[\vecfield] - \past[\vecfield]}^2]
    \notag\\
    &\le
    8\lips^2\ex[\norm{\inter-\current}^2]  + 6\noisevar
    \notag\\
    &~~~~~
    +     \frac{\last[\step]^2}{\current[\step]^2}
    (1-\strong\current[\step])\ex[\norm{\past[\vecfield]-\pastpast[\vecfield]}^2]
    -\ex[\norm{\inter[\vecfield] - \past[\vecfield]}^2],
    \label{eq:peg_stoch_tele_cal}
\end{align}
where in the last line we also use
$\ex[\norm{\past[\noise]}^2]\le\noisevar$, $\ex[\norm{\inter[\noise]}^2]\le\noisevar$.

We also have
\begin{equation}
    \ex[\product{\inter[\vecfield]}{\inter-\sol}]
    = \ex[\exof{\product{\inter[\vecfield]}{\inter-\sol}\given{\current[\filter]}}]
    = \ex[\product{\vecfield(\inter)}{\inter-\sol}].
    \label{eq:peg_stoch_mid_ex}
\end{equation}
Since $\sol$ is the unique solution of \eqref{eq:SVI}, it follows
$\product{\vecfield(\sol)}{\inter-\sol}\ge0$.
Consequently, with strong monotonicity of $\vecfield$, we get
\begin{equation}
    \product{\vecfield(\inter)}{\inter-\sol}
    \ge \product{\vecfield(\inter)-\vecfield(\sol)}{\inter-\sol}
    \ge \strong\norm{\inter-\sol}^2.
\end{equation}
By Young's inequality
\begin{align}
    \norm{\current-\sol}^2\le2\norm{\current-\inter}^2+2\norm{\inter-\sol}^2,
\end{align}
we can further write
\begin{equation}
    \product{\vecfield(\inter)}{\inter-\sol}
    \ge \frac{\strong}{2}\norm{\current-\sol}^2 - \strong\norm{\current-\inter}^2.
    \label{eq:peg_stoch_strong_bound}
\end{equation}
Taking expectation over \eqref{eq:peg_stoch_4points} and using
\eqref{eq:peg_stoch_tele_cal}, \eqref{eq:peg_stoch_mid_ex},
\eqref{eq:peg_stoch_strong_bound} leads to
\begin{align}
    \ex[\norm{\update-\sol}^2]
    & \le
    \ex[\norm{\current-\sol}^2]
    - \strong\current[\step]\ex[\norm{\current-\sol}^2]
    + 2\strong\current[\step]\ex[\norm{\current-\inter}^2]
    \notag\\
    & \enspace
    + \last[\step]^2
    (1-\strong\current[\step])\ex[\norm{\past[\vecfield]-\pastpast[\vecfield]}^2]
    \notag\\
    &\enspace
    + 8\current[\step]^2\lips^2\ex[\norm{\inter-\current}^2]
    - \current[\step]^2\ex[\norm{\inter[\vecfield] - \past[\vecfield]}^2]
    \notag\\
    &\enspace
    + 6\current[\step]^2\noisevar
    - \ex[\norm{\inter - \current}^2]
    \notag\\
    &=
    (1-\strong\current[\step])
    (\ex[\norm{\current-\sol}^2]
    + \last[\step]^2\ex[\norm{\past[\vecfield]-\pastpast[\vecfield]}^2])
    \notag\\
    &\enspace
    + 6\current[\step]^2\noisevar
    - \current[\step]^2\ex[\norm{\inter[\vecfield] - \past[\vecfield]}^2]
    \notag\\
    &\enspace
    + (8\current[\step]^2\lips^2 + 2\strong\current[\step] - 1)\ex[\norm{\inter - \current}^2].
    \label{eq:peg_stoch_tele_aux}
\end{align}
Using $8\current[\step]^2\lips^2 + 2\strong\current[\step] - 1\le0$,
\eqref{eq:peg_stoch_tele_aux} reduces to
\begin{equation}
\begin{multlined}[b][0.8\linewidth]
    \ex[\norm{\update-\sol}^2]
    + \current[\step]^2\ex[\norm{\inter[\vecfield] - \past[\vecfield]}^2]\\
    \le
    (1-\strong\current[\step])
    (\ex[\norm{\current-\sol}^2]
    + \last[\step]^2\ex[\norm{\past[\vecfield]-\pastpast[\vecfield]}^2])
    + 6\current[\step]^2\noisevar.
    \label{eq:peg_stoch_rec}
\end{multlined}
\end{equation}

We conclude by applying \autoref{lem:chung1954}
with
$\seqitem_{\run}
\subs
\ex[\norm{\current-\sol}^2]
+ \last[\step]^2\ex[\norm{\past[\vecfield]-\pastpast[\vecfield]}^2]$,
$\consc\subs\strong\step$, 
$\conscalt\subs6\step^2\noisevar$, and $\run_0\subs 2$,
which gives
\begin{equation}
    \ex[\norm{\state_\run-\sol}^2]
    + \step_{\run-1}^2\ex[\norm{\vecfield_{\run-\frac{1}{2}}-\vecfield_{\run-\frac{3}{2}}}^2]
    \le \frac{6\step^2\noisevar}{\strong\step-1}\frac{1}{\run} + \smalloh\left(\frac{1}{\run}\right).
\end{equation}
The second term on the \ac{LHS} of the inequality is always positive, and \eqref{eq:rate-stoch-global} follows immediately.


\paragraph{Ergodic convergence\afterhead}
The convergence of $\avg_\run$ as shown in \eqref{eq:rate-stoch-global-erg} can be deduce directly from above by using Jensen's inequality:
\begin{equation}
    \ex[\norm{\avg_\run-\sol}^2] \le \frac{1}{t}\sum_{\runalt=\start}^\run \ex[\norm{\current[\state][\runalt]-\sol}^2],
\end{equation}
and then we bound the \ac{RHS} of the inequality by \eqref{eq:rate-stoch-global}.

\subsection{Proof of \cref{thm:stoch-local}}
\label{app:stoch-local-proofs}




We start by defining some important quantities that will be used in our proof.
For any $\nRuns\ge1$, we set
\begin{align}
    \sumnoise_\nRuns
    &\defeq
    \sum_{\run=\start}^\nRuns
    2\step_\run\product{\inter[\noise]}{\inter-\sol},\\
    \sumnoisevar_\nRuns
    &\defeq
    \sum_{\run=\start}^\nRuns
    2\step_\run^2
    (\norm{\inter[\vecfieldstoch]}^2+\norm{\past[\vecfieldstoch]}^2),\\
    \sumnoiseall_\nRuns
    &\defeq
    \sumnoise_\nRuns^2 + \sumnoisevar_\nRuns.
\end{align}
Notice that $\sumnoise_\nRuns$, $\sumnoisevar_\nRuns$
and $\sumnoiseall_\nRuns$
are not $\filter_\nRuns$-measurable but
$\filter_{\nRuns+1}$-measurable (due to the terms in $\noise_{\nRuns+\frac{1}{2}}$ and $ \vecfieldstoch_{\nRuns+\frac{1}{2}}$).
For the sake of simplicity, we also write
$\inter[\snoise]\defeq\product{\inter[\noise]}{\inter-\sol}$ so that 
$\sumnoise_\nRuns = \sum_{\run=\start}^\nRuns 2\current[\step]\inter[\snoise]$
and $\exof{\inter[\snoise]\given{\current[\filter]}}=0$.

Regarding the choice of $\nhd$ and $\nhdalt$, we invoke \cref{lem:regualr} to obtain the corresponding $\strong$, $\nhdradius$ and $\nhd$.
We then set $\nhdalt \defeq \points\intersect\ballr{\sol}{\nhdradius/4}$. 
Let us consider the following events for $\nRuns\ge1$,
\begin{align}
    \eventalt_\nRuns
    &\defeq 
    \left\{\max_{\start\le\run\le\nRuns}\sumnoiseall_\run
    \le \noisebound \defeq \min\left(\frac{\nhdradius^2}{8}, \frac{\nhdradius^4}{16}\right)\right\},\\
    \event_\nRuns
    &\defeq
    \left\{\forall\run\in\intinterval{\start}{\nRuns}, \inter\in\nhd\right\}.
\end{align}
We additionally define
$\sumnoiseall_\laststart
\defeq 2\step_\start^2\norm{\vecfieldstoch_\paststart}^2$,
$\eventalt_\laststart \defeq \{\sumnoiseall_\laststart\le\noisebound\}$
and $\eventalt_\lastlaststart \defeq \event_\laststart \defeq \samples$,
where $\samples$ denotes the whole sample space.
It follows from the definitions that both
$(\eventalt_\nRuns)_{\nRuns\ge\lastlaststart}$ and
$(\event_\nRuns)_{\nRuns\ge\laststart}$ are
decreasing sequences of events.
Moreover, we have $\current[\eventalt][\nRuns]\in\update[\filter][\nRuns]$ while
$\current[\event][\nRuns]\in\current[\filter][\nRuns]$.
Also notice that $\event_\infty=\bigcap_{\nRuns\ge\laststart} \event_\nRuns$.

In terms of notation, for an event $E\subseteq \samples$, we denote by $\one_E$ its indicator function and $E^c$ its complementary. For any pair of events $E,F\subseteq \samples$, we denote by $\setexclude{E}{F}$ the event ``$E$ and not $F$'' \ie $E\cap F^c$.


The proof of the theorem relies on the two following lemmas.

\begin{lemma}
\label{lem:event-inclusion}
For any $\nRuns\ge0$, we have the inclusion  $\last[\eventalt][\nRuns]\subseteq\event_\nRuns$.
\end{lemma}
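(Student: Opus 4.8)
The plan is to argue by induction on $\nRuns\ge0$, the point being that the event $\last[\eventalt][\nRuns]$ controls, \emph{pathwise}, exactly the cumulative noise quantities that surface when one unrolls the quasi-descent inequality of the \eqref{eq:PEG} iteration. The base case $\nRuns=0$ is immediate, since $\eventalt_{\lastlaststart}=\samples=\event_{\laststart}$. For the inductive step I would assume the statement for $\nRuns-1$, namely $\eventalt_{\nRuns-2}\subseteq\event_{\nRuns-1}$. Because $(\eventalt_\nRuns)$ is a decreasing sequence of events, this gives $\eventalt_{\nRuns-1}\subseteq\eventalt_{\nRuns-2}\subseteq\event_{\nRuns-1}$, so on $\eventalt_{\nRuns-1}$ every leading state $\inter$ with $\run\le\nRuns-1$ already lies in $\nhd$. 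Since $\event_\nRuns=\event_{\nRuns-1}\cap\{\inter[\state][\nRuns]\in\nhd\}$, it then remains only to show that $\inter[\state][\nRuns]=\state_{\nRuns+\frac12}\in\nhd$ on $\eventalt_{\nRuns-1}$.

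To that end I would first record a pathwise one-step inequality valid at each index $\run$ for which $\inter\in\nhd$. Starting from \eqref{eq:peg_stoch_4points} with trial point $\sol$, I split $\inter[\vecfield]=\vecfield(\inter)+\inter[\noise]$; the local monotonicity furnished by \cref{lem:regualr} gives $\product{\vecfield(\inter)}{\inter-\sol}\ge\strong\norm{\inter-\sol}^2\ge0$ precisely because $\inter\in\nhd$, while one application of Young's inequality bounds $\step_\run^2\norm{\inter[\vecfield]-\past[\vecfield]}^2$ by $2\step_\run^2(\norm{\inter[\vecfield]}^2+\norm{\past[\vecfield]}^2)$. Dropping the remaining nonpositive terms yields
\[
\norm{\update-\sol}^2\le\norm{\current-\sol}^2-2\step_\run\inter[\snoise]+2\step_\run^2\bigl(\norm{\inter[\vecfield]}^2+\norm{\past[\vecfield]}^2\bigr).
\]
Summing over $\run=\start,\dotsc,\nRuns-1$ (legitimate since every such $\inter\in\nhd$ on $\eventalt_{\nRuns-1}$) telescopes into $\norm{\state_\nRuns-\sol}^2\le\norm{\state_\start-\sol}^2-\sumnoise_{\nRuns-1}+\sumnoisevar_{\nRuns-1}$, which is exactly where the quantities defining $\eventalt$ enter.

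The third step is the numerical bookkeeping. On $\eventalt_{\nRuns-1}$ one has $\sumnoisevar_{\nRuns-1}\le\sumnoiseall_{\nRuns-1}\le\noisebound$ and $\abs{\sumnoise_{\nRuns-1}}\le\sqrt{\sumnoiseall_{\nRuns-1}}\le\sqrt{\noisebound}$, while $\state_\start\in\nhdalt$ gives $\norm{\state_\start-\sol}^2\le\nhdradius^2/16$; combined with $\sqrt{\noisebound}\le\nhdradius^2/4$ and $\noisebound\le\nhdradius^2/8$ these produce $\norm{\state_\nRuns-\sol}^2\le 7\nhdradius^2/16$. I then push this from the base state to the leading state through the nonexpansiveness estimate $\norm{\inter[\state][\nRuns]-\state_\nRuns}\le\step_\nRuns\norm{\past[\vecfield][\nRuns]}$ and Young, noting that $2\step_\nRuns^2\norm{\past[\vecfield][\nRuns]}^2\le\sumnoisevar_{\nRuns-1}\le\noisebound$ (using $\step_\nRuns\le\step_{\nRuns-1}$, so this increment already sits inside $\sumnoisevar_{\nRuns-1}$). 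This gives $\norm{\inter[\state][\nRuns]-\sol}^2\le 2\cdot(7\nhdradius^2/16)+\noisebound\le\nhdradius^2$, and since $\inter[\state][\nRuns]\in\points$ by construction, $\inter[\state][\nRuns]\in\points\cap\cpt=\nhd$, closing the induction. The case $\nRuns=1$ is identical, except that the final increment is controlled directly by the special quantity $\sumnoiseall_{\laststart}=2\step_\start^2\norm{\past[\vecfield][\start]}^2\le\noisebound$ rather than by $\sumnoisevar$.

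I expect the main obstacle to be the pathwise nature of the argument: unlike the in-expectation proof of \cref{thm:stoch-global}, the martingale cross-term $-2\step_\run\inter[\snoise]$ cannot be annihilated by conditioning, so the whole scheme hinges on having defined $\sumnoise$, $\sumnoisevar$, $\sumnoiseall$ and the threshold $\noisebound=\min(\nhdradius^2/8,\nhdradius^4/16)$ so that the telescoped bound closes at \emph{exactly} $\nhdradius^2$. Making the constants and the half-integer index bookkeeping line up—in particular verifying that $2\step_\nRuns^2\norm{\past[\vecfield][\nRuns]}^2$ really is one of the summands of $\sumnoisevar_{\nRuns-1}$—is the delicate part; the analytic content is otherwise light.
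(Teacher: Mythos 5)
Your proposal is correct and follows essentially the same route as the paper's proof: induction on $\nRuns$ using the decreasing property of $(\eventalt_\nRuns)$, the pathwise quasi-descent inequality \eqref{eq:peg_stoch_4points} with the local monotonicity from \cref{lem:regualr}, telescoping to bound $\norm{\state_\nRuns-\sol}^2\le\tfrac{7}{16}\nhdradius^2$ via the $\sumnoise$/$\sumnoisevar$/$\sumnoiseall$ bookkeeping, and then Young's inequality plus nonexpansiveness of the projection (with $\step_\nRuns\le\step_{\nRuns-1}$) to push the bound to the leading state. The only differences are cosmetic index shifts and your folding of the $\nRuns=1$ case into the inductive step rather than the initialization.
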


\begin{proof} We prove this result by induction.

\underline{Initialization:}
$\eventalt_\lastlaststart\subseteq\event_\laststart$ is clear.
To prove that we also have $\eventalt_\laststart\subseteq\event_\start$, we use Young's inequality to get
\begin{equation}
    \norm{\state_\interstart-\sol}^2
    \le
    2\norm{\state_\interstart-\state_\start}^2
    + 2\norm{\state_\start-\sol}^2.
    \label{eq:local_stoch_inclu_init_young}
\end{equation}
On the one hand, since $\state_\start\in\nhdalt$ by assumption,
it holds $\norm{\state_\start-\sol}^2\le\frac{\nhdradius^2}{16}$.
On the other hand,
%
\begin{equation}
    2\norm{\state_\interstart-\state_\start}^2
    = 2\norm{
        \proj_\points(\state_\start-\step_\start\vecfieldstoch_\paststart)
        -\proj_\points(\state_\start)}^2
    \le 2\step_\start\norm{\vecfieldstoch_\paststart}^2
    = \sumnoiseall_\laststart
\end{equation}

For any realization in $\eventalt_\laststart$, we have $2\step_\start\norm{\vecfieldstoch_\paststart}^2
    \le \frac{\nhdradius^2}{8}$; and so we can deduce from \eqref{eq:local_stoch_inclu_init_young} that $\norm{\state_\interstart-\sol}^2\le\frac{\nhdradius^2}{4}<\nhdradius^2$.
Since $\state_\interstart\in\points$,
it follows that $\state_\interstart\in\nhd$. This means that  $\eventalt_\laststart\subseteq\event_\start$.

\underline{Inductive step:}
Suppose that $\last[\eventalt][\nRuns]\subseteq\current[\event][\nRuns]$ holds
for some $\nRuns\ge\start$.
We would like to prove
$\current[\eventalt][\nRuns]\subseteq\update[\event][\nRuns]$.
To do so, we show that $\norm{\update[\state][\nRuns]-\sol}^2\le\frac{7}{16}\nhdradius^2$ for any realization in $\current[\eventalt][\nRuns]$. Applying \autoref{lem:4points} \ref{lem:4points-b} as in \eqref{eq:peg_stoch_4points}  yields for all
$\run\in\intinterval{\start}{\nRuns}$,
\begin{align}
    \norm{\update-\sol}^2
    & \le
    \norm{\current-\sol}^2
    - 2\current[\step]\product{\inter[\vecfieldstoch]}{\inter-\sol}
    \notag\\
    & \enspace
    + \current[\step]^2\norm{\inter[\vecfieldstoch] - \past[\vecfieldstoch]}^2
    - \norm{\inter - \current}^2
    \notag\\
    & \le
    \norm{\current-\sol}^2
    - 2\current[\step]\product{\vecfield(\inter)}{\inter-\sol}
    \notag\\
    & \enspace
    - 2\current[\step]\product{\inter[\noise]}{\inter-\sol}
    + 2\current[\step]^2
        (\norm{\inter[\vecfieldstoch]}^2+\norm{\past[\vecfieldstoch]}^2)
    \notag\\
    & \le
    \norm{\current-\sol}^2
    - 2\current[\step]\inter[\snoise]
    + 2\current[\step]^2
    (\norm{\inter[\vecfieldstoch]}^2+\norm{\past[\vecfieldstoch]}^2),
    \label{eq:local_stoch_inclu_4points}
\end{align}
where in the last line we can use
$\product{\vecfield(\inter)}{\inter-\sol}\ge0$
since by induction hypothesis,
$\current[\eventalt][\nRuns]\subseteq\last[\eventalt][\nRuns]\subseteq\current[\event][\nRuns]$,
which means for any realization in $\current[\eventalt][\nRuns]$,  $\inter\in\nhd$  for all $\run\in\intinterval{\start}{\nRuns}$.

Summing \eqref{eq:local_stoch_inclu_4points} from $\run=\start$ to $\nRuns$
gives
\begin{align}
    \norm{\update[\state][\nRuns]-\sol}^2
    & \le
    \norm{\state_\start-\sol}^2
    - \sum_{\run=\start}^{\nRuns}2\current[\step]\inter[\snoise]
    + \sum_{\run=\start}^{\nRuns}
    2\current[\step]^2
    (\norm{\inter[\vecfieldstoch]}^2+\norm{\past[\vecfieldstoch]}^2)
    \notag\\
    & =
    \norm{\state_\start-\sol}^2
    - \current[\sumnoise][\nRuns]
    + \current[\sumnoisevar][\nRuns].
\end{align}
By definition of $\current[\eventalt][\nRuns]$, we have
$\current[\sumnoise][\nRuns]^2
\le\current[\sumnoiseall][\nRuns]\le\frac{\nhdradius^4}{16}$ (so $\abs{\current[\sumnoise][\nRuns]}\le\frac{\nhdradius^2}{4}$)
and
$\current[\sumnoisevar][\nRuns]
\le\current[\sumnoiseall][\nRuns]\le\frac{\nhdradius^2}{8}$.
Using that
$\norm{\state_\start-\sol}^2\le\frac{\nhdradius^2}{16}$ by assumption,
it follows immediately that 
$\norm{\update[\state][\nRuns]-\sol}^2\le\frac{7}{16}\nhdradius^2$.

Finally, in order to bound $\norm{\future[\state][\nRuns]-\sol}^2$, we again rely on Young's inequality:
\begin{align}
    \norm{\future[\state][\nRuns]-\sol}^2
    & \le 
    2 \norm{\future[\state][\nRuns]-\update[\state][\nRuns]}^2
    + 2 \norm{\update[\state][\nRuns]-\sol}^2
    \notag\\
    & \le
    2 \update[\step][\nRuns]^2\norm{\inter[\vecfieldstoch][\nRuns]}^2
    + 2 \norm{\update[\state][\nRuns]-\sol}^2.
    \label{eq:local_stoch_inclu_induct_young}
\end{align}

For any realization in $\current[\eventalt][\nRuns]$, we have that
\begin{align}
  &\text{i) } ~~~  2\update[\step][\nRuns]^2\norm{\inter[\vecfieldstoch][\nRuns]}^2
    \le2\current[\step][\nRuns]^2\norm{\inter[\vecfieldstoch][\nRuns]}^2
    \le\current[\sumnoisevar][\nRuns]
    \le\current[\sumnoiseall][\nRuns]\le\frac{\nhdradius^2}{8};\\
   &\text{ii) } ~~~    2 \norm{\update[\state][\nRuns]-\sol}^2 \le \frac{7}{8}\nhdradius^2.
\end{align}
Thus, \eqref{eq:local_stoch_inclu_induct_young} implies that 
$\norm{\future[\state][\nRuns]-\sol}^2\le\nhdradius^2$,
and subsequently $\future[\state][\nRuns]\in\nhd$.
As $\current[\eventalt][\nRuns]\subseteq\current[\event][\nRuns]$
and $\update[\event][\nRuns]=
\{\future[\state][\nRuns]\in\nhd\}\intersect\current[\event][\nRuns]$, we have proven that $\current[\eventalt][\nRuns]\subseteq\update[\event][\nRuns]$.
\end{proof}

\begin{lemma}
\label{lem:local-exp-prob-rec}
For $\run\ge\start$, we have the following recurrence inequality
\begin{equation}
    \ex[\current[\sumnoiseall][\run]\one_{\last[\eventalt][\run]}]
    \le
 \ex[\last[\sumnoiseall][\run]\one_{\lastlast[\eventalt][\run]}]
 +    \current[\step][\run]^2\lprbound
    - \noisebound\prob(\setexclude{\lastlast[\eventalt][\run]}{\last[\eventalt][\run]}),
    \label{eq:local_stoch_prob_tele}
\end{equation}
%
where $\lprbound\defeq4\vbound^2+4\noisevar+4\nhdradius^2\noisevar$ and $\noisebound \defeq \min\left(\frac{\nhdradius^2}{8}, \frac{\nhdradius^4}{16}\right)$.

Moreover, if $\run=\start$, the bound can be refined to
\begin{equation}
    \ex[\sumnoiseall_\start \one_{\eventalt_\laststart}]
    \le
    \ex[\sumnoiseall_\laststart \one_{\eventalt_\lastlaststart}]
    +     \step_\start^2(2\vbound^2+2\noisevar+4\nhdradius^2\noisevar)
    - \noisebound\prob(\setexclude{\eventalt_\lastlaststart}{\eventalt_\laststart}).
    \label{eq:local_stoch_prob_tele_t1}
\end{equation}
\end{lemma}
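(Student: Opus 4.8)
The plan is to bound the one–step increment of $\sumnoiseall_\run$ under the indicator of the good event. Since $\sumnoise_\run=\sumnoise_{\run-1}+2\step_\run\inter[\snoise]$ with $\inter[\snoise]=\product{\inter[\noise]}{\inter-\sol}$, and $\sumnoisevar_\run=\sumnoisevar_{\run-1}+2\step_\run^2(\norm{\inter[\vecfieldstoch]}^2+\norm{\past[\vecfieldstoch]}^2)$, squaring gives, for $\run\ge\afterstart$,
\[
\sumnoiseall_\run
= \sumnoiseall_{\run-1}
+ 4\step_\run\sumnoise_{\run-1}\inter[\snoise]
+ 4\step_\run^2\inter[\snoise]^2
+ 2\step_\run^2\bigl(\norm{\inter[\vecfieldstoch]}^2+\norm{\past[\vecfieldstoch]}^2\bigr).
\]
I would multiply by $\one_{\eventalt_{\run-1}}$ and take expectations, treating the four terms in turn. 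The reason for squaring $\sumnoise$ in the definition of $\sumnoiseall$ is precisely that the bilinear term is a martingale increment: $\sumnoise_{\run-1}$ and $\one_{\eventalt_{\run-1}}$ are $\current[\filter]$–measurable (from $\current[\eventalt]\in\update[\filter]$ we get $\eventalt_{\run-1}\in\current[\filter]$), while $\inter$ is $\current[\filter]$–measurable and $\exof{\inter[\noise]\given\current[\filter]}=0$; hence $\exof{\inter[\snoise]\given\current[\filter]}=0$ and, by the tower rule, $\ex[4\step_\run\sumnoise_{\run-1}\inter[\snoise]\one_{\eventalt_{\run-1}}]=0$.

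For the two ``fresh'' terms I again condition on $\current[\filter]$. By \cref{lem:event-inclusion}, $\eventalt_{\run-1}\subseteq\event_\run$, so on $\eventalt_{\run-1}$ the leading iterate obeys $\inter\in\nhd$, giving $\norm{\inter-\sol}^2\le\nhdradius^2$ and $\norm{\vecfield(\inter)}^2\le\vbound^2$. With Cauchy--Schwarz, $\exof{\inter[\snoise]^2\given\current[\filter]}\le\norm{\inter-\sol}^2\noisevar$, and with the zero–mean property, $\exof{\norm{\inter[\vecfieldstoch]}^2\given\current[\filter]}=\norm{\vecfield(\inter)}^2+\exof{\norm{\inter[\noise]}^2\given\current[\filter]}\le\norm{\vecfield(\inter)}^2+\noisevar$. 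These yield $\ex[4\step_\run^2\inter[\snoise]^2\one_{\eventalt_{\run-1}}]\le4\step_\run^2\nhdradius^2\noisevar$ and $\ex[2\step_\run^2\norm{\inter[\vecfieldstoch]}^2\one_{\eventalt_{\run-1}}]\le2\step_\run^2(\vbound^2+\noisevar)$.

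The crux---and the source of the ``conditional bias'' flagged in the main text---is the term $\norm{\past[\vecfieldstoch]}^2$, where $\past[\vecfieldstoch]=\vecfield(\past)+\past[\noise]$ carries the already realized noise $\past[\noise]$. The event $\eventalt_{\run-1}$ is only $\current[\filter]$–measurable and genuinely involves $\past[\noise]$ through $\sumnoisevar_{\run-1}$, so conditioning on it biases $\past[\noise]$ and I cannot average it out at level $\current[\filter]$. The remedy is to back off one step: since $\eventalt_{\run-1}\subseteq\eventalt_{\run-2}$ and $\norm{\past[\vecfieldstoch]}^2\ge0$, replacing $\one_{\eventalt_{\run-1}}$ by $\one_{\eventalt_{\run-2}}$ only increases the bound, and $\eventalt_{\run-2}\in\last[\filter]$. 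Conditioning now on $\last[\filter]$---under which $\past$ is measurable, $\exof{\past[\noise]\given\last[\filter]}=0$ and $\exof{\norm{\past[\noise]}^2\given\last[\filter]}\le\noisevar$---and using $\eventalt_{\run-2}\subseteq\event_{\run-1}$ (whence $\past\in\nhd$, $\norm{\vecfield(\past)}^2\le\vbound^2$) gives $\ex[2\step_\run^2\norm{\past[\vecfieldstoch]}^2\one_{\eventalt_{\run-1}}]\le2\step_\run^2(\vbound^2+\noisevar)$.

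It remains to recast the leading term against $\ex[\sumnoiseall_{\run-1}\one_{\eventalt_{\run-2}}]$: writing $\one_{\eventalt_{\run-1}}=\one_{\eventalt_{\run-2}}-\one_{\setexclude{\eventalt_{\run-2}}{\eventalt_{\run-1}}}$ and noting that $\sumnoiseall_{\run-1}>\noisebound$ on $\setexclude{\eventalt_{\run-2}}{\eventalt_{\run-1}}$ (by the definition of $\eventalt_{\run-1}$) produces exactly the penalty $-\noisebound\prob(\setexclude{\eventalt_{\run-2}}{\eventalt_{\run-1}})$. Adding the four contributions gives the coefficient $4\vbound^2+4\noisevar+4\nhdradius^2\noisevar=\lprbound$, which is \eqref{eq:local_stoch_prob_tele}. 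For $\run=\start$ the same scheme applies, but now $\norm{\past[\vecfieldstoch][\start]}^2=\norm{\vecfieldstoch_\paststart}^2$ is already contained in $\sumnoiseall_\laststart$, so there is no separate past–gradient term and the delicate back–off is unnecessary; this saves one $2\step_\start^2(\vbound^2+\noisevar)$ and leaves the sharper coefficient $2\vbound^2+2\noisevar+4\nhdradius^2\noisevar$ of \eqref{eq:local_stoch_prob_tele_t1}. I expect the back–off maneuver handling the conditional bias of $\past[\noise]$ to be the main obstacle; everything else is bookkeeping across the two conditioning levels.
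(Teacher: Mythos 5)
Your proposal is correct and follows essentially the same route as the paper's proof: the same expansion of the increment $\current[\sumnoiseall][\run]-\last[\sumnoiseall][\run]$, the same martingale argument killing the cross term $\inter[\snoise]\last[\sumnoise]$, the same conditioning bounds for $\inter[\snoise]^2$ and $\norm{\inter[\vecfieldstoch]}^2$, the same back-off from $\one_{\last[\eventalt][\run]}$ to $\one_{\lastlast[\eventalt][\run]}$ to neutralize the conditional bias carried by $\past[\noise]$, and the same indicator decomposition yielding the penalty $-\noisebound\prob(\setexclude{\lastlast[\eventalt][\run]}{\last[\eventalt][\run]})$. Your accounting for the refined constant at $\run=\start$ (the past-gradient term being already absorbed in $\sumnoiseall_\laststart$) likewise matches the paper's treatment.
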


\begin{proof}
We decompose
\begin{align}
    \ex[\current[\sumnoiseall][\run]\one_{\last[\eventalt][\run]}]
    & =
    \ex[(\current[\sumnoiseall][\run]-\last[\sumnoiseall][\run])\one_{\last[\eventalt][\run]}]
    + \ex[\last[\sumnoiseall][\run]\one_{\last[\eventalt][\run]}]
    \notag\\
    & =
    \ex[(\current[\sumnoiseall][\run]-\last[\sumnoiseall][\run])\one_{\last[\eventalt][\run]}]
    + \ex[\last[\sumnoiseall][\run]\one_{\lastlast[\eventalt][\run]}]
    - \ex[\last[\sumnoiseall][\run]\one_{\setexclude{\lastlast[\eventalt][\run]}{\last[\eventalt][\run]}}],
    \label{eq:local_stoch_prob_decompose}
\end{align}
where the second equality comes from the fact that as $\last[\eventalt][\run] \subseteq \lastlast[\eventalt][\run] $, we have $\last[\eventalt][\run] = \setexclude{\lastlast[\eventalt][\run]}{(\setexclude{\lastlast[\eventalt][\run]}{\last[\eventalt][\run]})} $.

For $\run\ge\afterstart$, we write
\begin{align}
    \current[\sumnoiseall][\run]
    &= 
    \current[\sumnoise][\run]^2 + \current[\sumnoisevar][\run]
    \notag\\
    &=
    \last[\sumnoise][\run]^2
    + 4 \current[\step][\run] \inter[\snoise][\run] \last[\sumnoise][\run] 
    + 4 \current[\step][\run]^2 \inter[\snoise][\run]^2
    + \last[\sumnoisevar][\run]
    + 2 \current[\step][\run]^2
        (\norm{\inter[\vecfieldstoch][\run]}^2+\norm{\past[\vecfieldstoch][\run]}^2)
    \notag\\
    &=
    \last[\sumnoiseall][\run]
    + 4 \current[\step][\run] \inter[\snoise][\run] \last[\sumnoise][\run] 
    + 4 \current[\step][\run]^2 \inter[\snoise][\run]^2
    + 2 \current[\step][\run]^2
        (\norm{\inter[\vecfieldstoch][\run]}^2+\norm{\past[\vecfieldstoch][\run]}^2).
    \label{eq:local_stoch_proba_QT_decompose}
\end{align}
Since $\last[\sumnoise][\run]$ and $\last[\eventalt][\run]$ are $\current[\filter][\run]$-measurable, we get
\begin{equation}
    \ex[ 
        \inter[\snoise][\run]
        \last[\sumnoise][\run] \one_{\last[\eventalt][\run]}]
    =  
    \ex[
        \exof{\inter[\snoise][\run]\given{\current[\filter][\run]}}
        \last[\sumnoise][\run]  \one_{\last[\eventalt][\run]}
    ]
    = 0.
    \label{eq:local_stoch_proba_xi0}
\end{equation}
By \autoref{lem:event-inclusion},
$\last[\eventalt][\run]\subseteq\current[\event][\run]$ which means that for any realization in $\last[\eventalt][\run]$, 
we have $\inter[\state][\run]\in\nhd$. Therefore, $\norm{\inter[\state][\run]-\sol}^2\one_{\last[\eventalt][\run]} \le \nhdradius^2 \one_{\last[\eventalt][\run]}$ and consequently
\begin{align}
    \inter[\snoise][\run]^2\one_{\last[\eventalt][\run]}
    &= \product{\inter[\noise][\run]}{\inter[\state][\run]-\sol}^2
        \one_{\last[\eventalt][\run]}
        \notag\\
       &\le \norm{\inter[\noise][\run]}^2
        \norm{\inter[\state][\run]-\sol}^2\one_{\last[\eventalt][\run]} \le \norm{\inter[\noise][\run]}^2
        \nhdradius^2 \one_{\last[\eventalt][\run]}.
\end{align}
Using again that $\last[\eventalt][\run]$ is $\current[\filter][\run]$-measurable along with the boundedness of the variance of $\inter[\noise][\run]$ (see Eq.~\eqref{eq:variance}), we get
%
\begin{align}
    \ex[\inter[\snoise][\run]^2 \one_{\last[\eventalt][\run]}]
    &\le
    \nhdradius^2 \ex[\norm{\inter[\noise][\run]}^2 \one_{\last[\eventalt][\run]}] =
    \nhdradius^2
    \ex[
        \exof{\norm{\inter[\noise][\run]}^2\given{\current[\filter][\run]}}
        \one_{\last[\eventalt][\run]}
    ]
    \notag\\
    &\le
    \nhdradius^2
    \ex[\noisevar\one_{\last[\eventalt][\run]}] 
    =  \nhdradius^2\noisevar
    \prob[{\last[\eventalt][\run]}] 
    \le \nhdradius^2\noisevar.
    \label{eq:local_stoch_proba_xi2bound}
\end{align}
Applying once again the techniques above and relying on the boundedness of $\vecfield$ (as for any realization in  $ \last[\eventalt][\run] \subseteq \current[\event]$ we have $\inter[\state][\run]\in\nhd$ and $\vbound=\sup_{\point\in\nhd}\vecfield(\point) < \infty$), 
we get
\begin{align}
    \ex[\norm{\inter[\vecfieldstoch][\run]}^2\one_{\last[\eventalt][\run]}]
    & =
    \ex[
        (\norm{\vecfield(\inter[\state][\run])}^2
        +2\product{\inter[\noise][\run]}{\vecfield(\inter[\state][\run])}
        +\norm{\inter[\noise][\run]}^2)
        \one_{\last[\eventalt][\run]}
    ]
    \notag\\
    & =
    \ex[
        \norm{\vecfield(\inter[\state][\run])}^2
        \one_{\last[\eventalt][\run]}
    ]
    \notag\\
    &\enspace
    + 2\ex[
        \exof{\product{\inter[\noise][\run]}{\vecfield(\inter[\state][\run])}
              \given{\current[\filter][\run]}}
        \one_{\last[\eventalt][\run]}]
    + \ex[\norm{\inter[\noise][\run]}^2  \one_{\last[\eventalt][\run]}]
    \notag\\
    & =
    \ex[
        \norm{\vecfield(\inter[\state][\run])}^2
        \one_{\last[\eventalt][\run]}
    ]
    + 0
    + \ex[
        \exof{\norm{\inter[\noise][\run]}^2\given{\current[\filter][\run]}}
        \one_{\last[\eventalt][\run]}
    ]
    \notag\\
    & \le
    \vbound^2 + \noisevar.
    \label{eq:local_stoch_proba_v2bound}
\end{align}
Using that $\last[\eventalt][\run]\subseteq\lastlast[\eventalt][\run]$
and repeating the arguments leading to \eqref{eq:local_stoch_proba_v2bound},
we have
\begin{equation}
    \ex[\norm{\past[\vecfieldstoch][\run]}^2\one_{\last[\eventalt][\run]}]
    \le
    \ex[\norm{\past[\vecfieldstoch][\run]}^2\one_{\lastlast[\eventalt][\run]}]
    \le
    \vbound^2 + \noisevar.
    \label{eq:local_stoch_proba_v2bound_bis}
\end{equation}
Combining
\eqref{eq:local_stoch_proba_QT_decompose},
\eqref{eq:local_stoch_proba_xi0},
\eqref{eq:local_stoch_proba_xi2bound},
\eqref{eq:local_stoch_proba_v2bound}
and
\eqref{eq:local_stoch_proba_v2bound_bis},
we get
\begin{equation}
    \ex[
        (\current[\sumnoiseall][\run]-\last[\sumnoiseall][\run])
        \one_{\last[\eventalt][\run]}
    ]
    \le
    \current[\step][\run]^2(4\vbound^2+4\noisevar+4\nhdradius^2\noisevar)
    = \current[\step][\run]^2\lprbound.
    \label{eq:local_stoch_prob_diff_bound}
\end{equation}
For the last term on the \ac{RHS} of \eqref{eq:local_stoch_prob_decompose},
we get by definition that for any realization in $\setexclude{\lastlast[\eventalt][\run]}{\last[\eventalt][\run]}$, $\last[\sumnoiseall][\run]>\noisebound$ and thus 
\begin{equation}
    \ex[
        \last[\sumnoiseall][\run]
        \one_{\setexclude{\lastlast[\eventalt][\run]}{\last[\eventalt][\run]}}
    ]
    \ge
        \noisebound
        \ex[
        \one_{\setexclude{\lastlast[\eventalt][\run]}{\last[\eventalt][\run]}}
    ]
    =
    \noisebound
    \prob(\setexclude{\lastlast[\eventalt][\run]}{\last[\eventalt][\run]}).
    \label{eq:local_stoch_prob_basic_ineq_bis}
\end{equation}
Substituting
\eqref{eq:local_stoch_prob_diff_bound}
and
\eqref{eq:local_stoch_prob_basic_ineq_bis}
into
\eqref{eq:local_stoch_prob_decompose}
gives exactly
\eqref{eq:local_stoch_prob_tele}.

The case $\run=\start$ is proved similarly.
In fact,
\begin{equation}
    \sumnoiseall_\start - \sumnoiseall_\laststart
    = 4\step_\start^2\snoise_\interstart^2
    + 2\step_\start^2\norm{\vecfieldstoch_\interstart}^2.
\end{equation}
Consequently by using $\eventalt_\laststart\subseteq\event_\start$,
we have
\begin{equation}
    \ex[
        (\sumnoiseall_\start - \sumnoiseall_\laststart)
        \one_{\eventalt_\laststart}
    ]
    \le
    \step_\start^2(2\vbound^2+2\noisevar+4\nhdradius^2\noisevar)
    \label{eq:local_stoch_prob_diff_bound_t1}.
\end{equation}
By definition
$\setexclude{\eventalt_\lastlaststart}{\eventalt_\laststart}
= \{\sumnoiseall_\laststart>\noisebound\}$,
which shows \eqref{eq:local_stoch_prob_basic_ineq_bis} is equally true
with $\run=\start$.
\eqref{eq:local_stoch_prob_tele_t1} can then be immediately deduced from \eqref{eq:local_stoch_prob_decompose}.
%
%
\end{proof}

\begin{proof}[Proof of \autoref{thm:stoch-local}]
\mbox{}
\begin{enumerate}[wide, label=(\alph*)]
\item
We first show that
by choosing $\strongstepm$ sufficiently large,
we have $\prob(\current[\eventalt][\nRuns]) \ge 1-\smallproba$ for all $\nRuns\ge\lastlaststart$ (when $\nRuns=-1$,  $\eventalt_{\lastlaststart}=\samples$). To do so, we will work on the complementary event ${\current[\eventalt][\nRuns]}^c 
= {\last[\eventalt][\nRuns]}^c
\union (\setexclude{\last[\eventalt][\nRuns]}{\current[\eventalt][\nRuns]})$
and prove that $\prob(\compprob{\current[\eventalt][\nRuns]})\le\smallproba$.
We start by bounding
$\prob(\setexclude{\last[\eventalt][\nRuns]}{\current[\eventalt][\nRuns]})$,
\begin{align}
    \noisebound\prob(\setexclude{\last[\eventalt][\nRuns]}{\current[\eventalt][\nRuns]})
    &= \noisebound
        \prob(\{\current[\sumnoiseall][\nRuns]>\noisebound\}\intersect\last[\eventalt][\nRuns])
        \notag\\
    &= \ex[\noisebound
        \one_{\{\current[\sumnoiseall][\nRuns]>\noisebound\}\intersect\last[\eventalt][\nRuns]}]
        \notag\\
    &\le \ex[\current[\sumnoiseall][\nRuns]
        \one_{\{\current[\sumnoiseall][\nRuns]>\noisebound\}\intersect\last[\eventalt][\nRuns]}]
        \notag\\
    &\le \ex[\current[\sumnoiseall][\nRuns]\one_{\last[\eventalt][\nRuns]}].
    \label{eq:local_stoch_prob_basic_ineq}
\end{align}
The last line is true since $\current[\sumnoiseall][\nRuns]$ is a positive random variable.

We now use \cref{lem:local-exp-prob-rec} by summing \eqref{eq:local_stoch_prob_tele} from $\run=\afterstart$ to $\nRuns$
and \eqref{eq:local_stoch_prob_tele_t1} which leads to
\begin{align}
    \ex[\current[\sumnoiseall][\nRuns]\one_{\last[\eventalt][\nRuns]}]
    & \le
    \ex[\sumnoiseall_\start \one_{\eventalt_\laststart}]
    + \sum_{\run=\afterstart}^{\nRuns}\current[\step][\run]^2\lprbound
    - \sum_{\run=\afterstart}^{\nRuns}\noisebound\prob(\setexclude{\lastlast[\eventalt][\run]}{\last[\eventalt][\run]})
    \notag\\
    & \le
    \ex[\sumnoiseall_\laststart \one_{\eventalt_\lastlaststart}]
    + \step_\start^2(2\vbound^2+2\noisevar+4\nhdradius^2\noisevar)
    + \sum_{\run=\afterstart}^{\nRuns}\current[\step][\run]^2\lprbound
    - \sum_{\run=\start}^{\nRuns}\noisebound\prob(\setexclude{\lastlast[\eventalt][\run]}{\last[\eventalt][\run]})
    \notag\\
    & =
    \ex[\sumnoiseall_\laststart]
    + \step_\start^2(2\vbound^2+2\noisevar+4\nhdradius^2\noisevar)
    + \sum_{\run=\afterstart}^{\nRuns}\current[\step][\run]^2\lprbound
    - \noisebound \prob(\compprob{\last[\eventalt][\nRuns]}),
    \label{eq:local_stoch_prob_tele_sum}
\end{align}
where in the last line we use that
$\eventalt_\lastlaststart=\samples$
and
$\compprob{\last[\eventalt][\nRuns]}
=\setexclude{\eventalt_\lastlaststart}{\last[\eventalt][\nRuns]}
=\disjUnion_{\start\le\run\le\nRuns}
(\setexclude{\lastlast[\eventalt][\run]}{\last[\eventalt][\run]})$ (
with $\disjUnion$ denoting the disjoint union) to get that $\prob(\compprob{\last[\eventalt][\nRuns]}) = \sum_{\run=\start}^{\nRuns}\prob(\setexclude{\lastlast[\eventalt][\run]}{\last[\eventalt][\run]})$.
Since we initialize with $\state_\paststart\in\nhd$, we have
\begin{equation}
    \ex[\sumnoiseall_\laststart]
    = 2\step_\start^2\ex[\norm{\vecfieldstoch_\paststart}^2]
    \le 2\step_\start^2(\vbound^2 + \noisevar).
    \label{eq:local_stoch_prob_Q0_bound}
\end{equation}
We set
$\stepsqsum \defeq \sum_{\run=\start}^{\infty} \current[\step]^2 < \infty$.
Combining
\eqref{eq:local_stoch_prob_basic_ineq},
\eqref{eq:local_stoch_prob_tele_sum}
and
\eqref{eq:local_stoch_prob_Q0_bound},
we obtain
\begin{align}
    \prob(\compprob{\eventalt_\nRuns})
    & =
    \prob(\setexclude{\last[\eventalt][\nRuns]}{\current[\eventalt][\nRuns]})
    + \prob(\compprob{\last[\eventalt][\nRuns]})
    \notag\\
    & \le
    \frac{1}{\noisebound}
    \ex[\current[\sumnoiseall][\nRuns]\one_{\last[\eventalt][\nRuns]}]
    + \prob(\compprob{\last[\eventalt][\nRuns]})
    \notag\\
    & \le
    \frac{1}{\noisebound}
    \sum_{\run=\start}^{\nRuns}\current[\step][\run]^2\lprbound
    - \prob(\compprob{\last[\eventalt][\nRuns]}) + \prob(\compprob{\last[\eventalt][\nRuns]})
    \le
    \frac{\stepsqsum\lprbound}{\noisebound}.
\end{align}

As $\stepsqsum $ converges to $0$ when $ \strongstepm \to \infty$, for any $\smallproba>0$ one can choose  $\strongstepm$ sufficiently large so that $\stepsqsum \le \smallproba\noisebound/\lprbound$; we then have
$\prob(\compprob{\eventalt_\nRuns})\le\smallproba$,
or equivalently,
$\prob(\eventalt_\nRuns)\ge 1- \smallproba$
for all $\nRuns\ge\lastlaststart$.

Since $\last[\eventalt][\nRuns] \subseteq \current[\event][\nRuns]$
from \autoref{lem:event-inclusion}, 
we know that by choosing $\strongstepm$ sufficiently large, 
we have
$\prob(\current[\event][\nRuns]) \ge \prob(\last[\eventalt][\nRuns]) \ge 1-\smallproba$
for all $\nRuns\ge\laststart$.
As $(\event_\nRuns)_{\nRuns\ge\start}$ is a decreasing sequence of events and
$\event_\infty = \bigcap_{\nRuns\ge\laststart} \event_\nRuns$, by continuity from above we have
\begin{equation}
    \prob(\event_\infty) = \lim_{\nRuns\rightarrow\infty} \prob(\event_\nRuns) \ge 1-\smallproba,
\end{equation}
concluding the proof.

\item
Applying \autoref{lem:4points} \ref{lem:4points-b} gives
\begin{align}
    \norm{\update-\sol}^2
    & \le
    \norm{\current-\sol}^2
    - 2\current[\step]\product{\inter[\vecfieldstoch]}{\inter-\sol}
    \notag\\
    &\enspace
    + \current[\step]^2\norm{\inter[\vecfieldstoch] - \past[\vecfieldstoch]}^2
    - \norm{\inter - \current}^2
    \notag\\
    & \le
    \norm{\current-\sol}^2
    - 2\current[\step]\product{\vecfield(\inter)}{\inter-\sol}
    \notag\\
    &\enspace
    - 2\current[\step]\product{\inter[\noise]}{\inter-\sol}
    \notag\\
    &\enspace
    + 2\current[\step]^2
        (\norm{\inter[\vecfieldstoch]}^2+\norm{\past[\vecfieldstoch]}^2)
    - \norm{\inter - \current}^2.
    \label{eq:local_stoch_conv_4points}
\end{align}
Furthermore, for any realization in $\current[\event]$, $\inter\in\nhd$ so that $\product{\vecfield(\inter)}{\inter-\sol}\ge\strong\norm{\inter-\sol}^2$
and thus equation \eqref{eq:peg_stoch_strong_bound} holds, which allows us to write
\begin{align}
    \norm{\update-\sol}^2\one_{\current[\event]}
    & \le
    \norm{\current-\sol}^2 \one_{\current[\event]}
    - 2\current[\step]\product{\vecfield(\inter)}{\inter-\sol} \one_{\current[\event]}
    \notag\\
    &\enspace
    - 2\current[\step]\product{\inter[\noise]}{\inter-\sol} \one_{\current[\event]}
    \notag\\
    &\enspace
    + 2\current[\step]^2
        (\norm{\inter[\vecfieldstoch]}^2+\norm{\past[\vecfieldstoch]}^2) \one_{\current[\event]}
    - \norm{\inter - \current}^2 \one_{\current[\event]}
    \notag\\
    & \le
    (1-\strong\current[\step])
    \norm{\current-\sol}^2 \one_{\current[\event]}
    - 2\current[\step]\product{\inter[\noise]}{\inter-\sol} \one_{\current[\event]}
    \notag\\
    &\enspace
    + 2\current[\step]^2
        (\norm{\inter[\vecfieldstoch]}^2+\norm{\past[\vecfieldstoch]}^2) \one_{\current[\event]}
    + (2\strong\current[\step] - 1) \norm{\inter - \current}^2 \one_{\current[\event]}.
    \label{eq:local_stoch_conv_descent}
\end{align}
Similarly to
\eqref{eq:local_stoch_proba_v2bound}
and
\eqref{eq:local_stoch_proba_v2bound_bis},
we have
\begin{align}
    \ex[\norm{\inter[\vecfieldstoch]}^2 \one_{\current[\event]}]
    &\le
    \vbound^2 + \noisevar, \\
    \ex[\norm{\past[\vecfieldstoch]}^2 \one_{\current[\event]}]
    &\le
    \ex[\norm{\past[\vecfieldstoch]}^2 \one_{\last[\event]}]
    \le
    \vbound^2 + \noisevar.
\end{align}
We also recall that as $  \current[\event] \in \current[\filter]$, it holds
\begin{align}
    \ex[\product{\inter[\noise]}{\inter-\sol} \one_{\current[\event]}]
    = 
    \ex[
        \exof{\product{\inter[\noise]}{\inter-\sol} \given{\current[\filter]}}
        \one_{\current[\event]}
    ]
    = 0.
\end{align}
Taking expectation over \eqref{eq:local_stoch_conv_descent} then leads to
\begin{align}
    \ex[\norm{\update-\sol}^2 \one_{\current[\event]}]
    &\le 
    (1-\strong\current[\step])
    \ex[\norm{\current-\sol}^2 \one_{\current[\event]}]
    \notag\\
    &\enspace
    + 4\current[\step]^2 (\vbound^2 + \noisevar)
    + (2\strong\current[\step] - 1) \ex[\norm{\inter - \current}^2 \one_{\current[\event]}].
\end{align}
We can choose $\strongstepm$ sufficiently large so that
$2\strong\current[\step]-1\le0$
for all $\run\ge\start$.
Using $\current[\event]\subseteq\last[\event]$, we obtain
\begin{equation}
    \ex[\norm{\update-\sol}^2 \one_{\current[\event]}]
    \le 
    (1-\strong\current[\step])
    \ex[\norm{\current-\sol}^2 \one_{\last[\event]}]
    + 4\current[\step]^2 (\vbound^2 + \noisevar).
\end{equation}
By applying \autoref{lem:chung1954}
with
$\seqitem_{\run}
\subs
\ex[\norm{\current-\sol}^2 \one_{\last[\event]}]$,
$\consc\subs\strong\step$, 
$\conscalt\subs4\step^2(\vbound^2+\noisevar)$, and $\run_0\subs 1$,
we get
\begin{equation}
    \ex[\norm{\current[\state][\run]-\sol}^2 \one_{\last[\event][\run]}]
    \le \frac{4\step^2(\vbound^2+\noisevar)}
    {\strong\step-1}\frac{1}{\run} + \smalloh\left(\frac{1}{\run}\right).
\end{equation}
Finally,
\begin{align}
    \exof{\norm{\state_\run-\sol}^2\given{\event_\infty}}
    &= \frac{\ex[\norm{\current[\state][\run]-\sol}^2  \one_{\event_\infty}]}{\prob(\event_\infty)}
    \notag\\
    &\le
    \frac{\ex[\norm{\current[\state][\run]-\sol}^2 \one_{\last[\event][\run]}]}{1-\smallproba}
    \notag\\
    &\le
    \frac{4\step^2(\vbound^2+\noisevar)}{(\strong\step-1)(1-\smallproba)}\frac{1}{\run}
    + \smalloh\left(\frac{1}{\run}\right)
\end{align}
\end{enumerate}
and our proof is complete.
\end{proof}

\begin{remark*}
We notice that to complete the above proof, we only require Eq.~\eqref{eq:mean} and Eq.~\eqref{eq:variance} to be held on the event $\{\inter\in\nhd\}$.
For example, in \eqref{eq:local_stoch_proba_xi2bound} we want
$\ex[\exof{\norm{\inter[\noise][\run]}^2\given{\current[\filter][\run]}} \one_{\last[\eventalt][\run]}]\le\noisevar$ which is true
if Eq.~\eqref{eq:variance} holds on $\{\inter\in\nhd\}$ since $\last[\eventalt][\run]\subseteq\current[\event][\run]\subseteq\{\inter\in\nhd\}$.
This assumption is much weaker and more sensible.
It in particular shows that to obtain local guarantee we indeed only need the noise to be bounded locally. 
\end{remark*}

\end{document}